\newtheorem{thm}{Theorem}[section]
\newtheorem{lm}[thm]{Lemma}
\newtheorem{prop}[thm]{Proposition}
\newtheorem{conj}[thm]{Conjecture}
\theoremstyle{definition}
\newtheorem{df}[thm]{Definition}
\newtheorem*{df*}{Definition}
\theoremstyle{remark}
\newtheorem{rem}[thm]{Remark}
\newtheorem*{rem*}{Remark}
\numberwithin{equation}{section}
\newcommand{\ci}[1]{_{ {}_{\scriptstyle #1}}}
\newcommand{\ti}[1]{_{\scriptstyle \text{\rm #1}}}
\newcommand{\e}{\varepsilon}
\newcommand{\cD}{\mathcal{D}}
\newcommand{\cS}{\mathcal{S}}
\newcommand{\R}{\mathbb{R}}
\newcommand{\Z}{\mathbb{Z}}
\newcommand{\wt}{\widetilde}
\newcommand{\cz}{Calder\'{o}n--Zygmund\ }
\newcommand{\La}{\langle }
\newcommand{\Ra}{\rangle }
\newcommand{\fdot}{\,\cdot\,}
\begin{document}

\title[Sparse square functions and separated bump conditions]{Two-weight estimates for sparse square functions and the separated bump conjecture}
\author{Spyridon Kakaroumpas}
\date{}

\begin{abstract}
We show that two-weight $L^2$ bounds for sparse square functions (uniform with respect to sparseness constants, and in both directions) do not imply a two-weight $L^2$ bound for the Hilbert transform. We present an explicit counterexample, making use of the construction due to Reguera--Thiele from \cite{reguera-thiele}. At the same time, we show that such two-weight bounds for sparse square functions do not imply both separated Orlicz bump conditions on the involved weights for $p=2$ (and for Young functions satisfying an appropriate integrability condition). We rely on the domination of $L\log L$ bumps by Orlicz bumps observed by Treil--Volberg in \cite{entropy} (for Young functions satisfying an appropriate integrability condition).
\end{abstract}

\maketitle
\setcounter{tocdepth}{1}

\tableofcontents
\setcounter{tocdepth}{2}

\section{Introduction and main results}
\label{s: intro}

This paper concerns the relation between two-weight estimates for sparse square functions and the so-called ``separated bump'' conjecture. Here, by weight on $\R^d$ we mean any function on $\R^d$ that is locally integrable, nonnegative, and positive on a set of positive Lebesgue measure. It has been long known that if $1<p<\infty$ and $w$ is a weight on $\R^d$ with $w>0$ a.e., then the celebrated Muckenhoupt $A_p$ condition
\begin{equation}
\label{Muckenhoupt-one-weight}
\sup_{Q}\left(\frac{1}{|Q|}\int_{Q}w(x)dx\right)\left(\frac{1}{|Q|}\int_{Q}w(x)^{-1/(p-1)}dx\right)^{p-1}<\infty,
\end{equation}
where supremum is taken over all cubes $Q$ in $\R^d$ and $|\fdot|,dx$ denote Lebesgue measure on $\R^d$, is sufficient for the boundedness over $L^{p}(w)=L^{p}(w(x)dx)$ of any \cz operator $T$ on $\R^d$, and necessary for that boundedness in the case that $T$ is the Hilbert transform on $\R$, or more generally the vector-valued Riesz transform (that is, all Riesz transforms considered together) on $\R^d$. Note that the boundedness of $T$ over $L^{p}(w)$ is equivalent (with equal norms) to the boundedness of the operator $f\mapsto T(f\sigma)$, denoted in the sequel by $T(\fdot \sigma)$, acting from $L^{p}(\sigma)$ into $L^{p}(w)$, where $\sigma:=w^{-1/(p-1)}$. Note that if \eqref{Muckenhoupt-one-weight} holds, then $\sigma$ is a weight as well.

However, simple examples show that for $1<p<\infty$ and general weights $w,\sigma$ on $\R^d$, the two-weight $A_p$ condition
\begin{equation}
\label{Muckenhoupt-two-weight}
\sup_{Q}\left(\frac{1}{|Q|}\int_{Q}w(x)dx\right)\left(\frac{1}{|Q|}\int_{Q}\sigma(x)dx\right)^{p-1}<\infty,
\end{equation}
where supremum is taken over all cubes $Q$ in $\R^d$, is not sufficient for the boundedness of the operator $T(\fdot \sigma)$ from $L^{p}(\sigma)$ into $L^{p}(w)$, in the case that $T$ is the Hilbert transform on $\R$; condition \eqref{Muckenhoupt-two-weight} is still necessary for that boundedness, though. It should be noted that if $\sigma>0$ a.e., then the boundedness of $T(\fdot \sigma)$ from $L^{p}(\sigma)$ into $L^{p}(w)$ is equivalent (with equal norms) to the boundedness of $T$ from $L^{p}(v)$ into $L^{p}(w)$, where $v:=\sigma^{1-p}$.

It is natural to ask whether ``bumping'' condition \eqref{Muckenhoupt-two-weight} could eliminate its lack of sufficiency for two-weight boundedness. It was proved by C. J. Neugebauer \cite{neugebauer} that if $1<p<\infty$ and $w,v$ are weights with $v>0$ a.e., then for all $r>1$, the condition
\begin{equation}
\label{Neugebauer}
\sup_{Q}\left(\frac{1}{|Q|}\int_{Q}w(x)^rdx\right)\left(\frac{1}{|Q|}\int_{Q}v(x)^{-r/(p-1)}dx\right)^{p-1}<\infty,
\end{equation}
where supremum is taken over all cubes $Q$ in $\R^d$, is sufficient for the boundedness of any \cz operator $T$ from $L^{p}(v)$ into $L^{p}(w)$. In fact, Neugebauer \cite{neugebauer} proved that if condition \eqref{Neugebauer} holds for some $r>1$ then one can find a Muckenhoupt $A_p$ weight $u$ such that $w\leq u\leq c v$ for some positive constant $c$.

More generally, for any Young function $\Phi$, that is a continuous, increasing and convex function $\Phi:[0,\infty)\rightarrow[0,\infty)$ with $\Phi(0)=0$ and $\lim_{t\rightarrow\infty}\Phi(t)/t=\infty$, and for any measure space $(X,\mu)$ denote by $\Vert\fdot\Vert\ci{L^{\Phi}(X,\mu)}$ the Luxemburg norm on $(X,\mu)$ with respect to $\Phi$, given by
\begin{equation*}
\Vert f\Vert\ci{L^{\Phi}(X,\mu)}:=\inf\left\lbrace\lambda>0:~\int_{X}\Phi\left(\frac{|f(x)|}{\lambda}\right)d\mu(x)\leq 1\right\rbrace.
\end{equation*}
The Orlicz space $L^{\Phi}(X,\mu)$ is then defined as the set of measurable functions $f$ on $X$ with $\Vert f\Vert\ci{L^{\Phi}(X,\mu)}<\infty$. We refer to \cite{extrapolation} for more details on Young functions and Orlicz norms. For all cubes $Q$ on $\R^d$, we will denote $L^{\Phi}(Q,dx/|Q|)$, where $dx/|Q|$ is normalized Lebesgue measure on $Q$, by just $L^{\Phi}(Q)$. The following result, having been conjectured (in a slightly different, but equivalent for sufficiently regular Young functions, form) by D. Cruz-Uribe and C. P\'{e}rez in \cite{cruz-uribe-perez}, was proved almost simultaneously by F. Nazarov, A. Reznikov, S. Treil and A. Volberg \cite{nazarov--reznikov--treil--volberg}, and (in a slightly different, but equivalent for sufficiently regular Young functions, form) A. Lerner \cite{lerner} (the latter actually proved a similar result for all $1<p<\infty$, which had also been conjectured by Cruz-Uribe and P\'{e}rez in \cite{cruz-uribe-perez}).

\begin{thm}\label{thm: bump}
Let $\Phi_1,\Phi_2$ be Young functions such that $\int_{c}^{\infty}\frac{1}{\Phi_i(x)}dx<\infty,~i=1,2$, for some $c>0$. Let $w,\sigma$ be weights on $\R^d$ satisfying
\begin{equation}
\label{bump condition}
\sup_{Q}\Vert w\Vert\ci{L^{\Phi_1}(Q)}\Vert\sigma\Vert\ci{L^{\Phi_2}(Q)}<\infty,
\end{equation}
where supremum is taken over all cubes $Q$ in $\R^d$. Then, for any \cz operator $T$, the operator $T(\fdot w)$ is bounded from $L^2(w)$ into $L^2(\sigma)$.
\end{thm}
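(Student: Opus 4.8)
The plan is to dominate $T$ by finitely many sparse operators and then reduce the two-weight estimate to the $L^2(\R^d)$-boundedness of two Orlicz maximal operators, where the integrability hypothesis on $\Phi_1,\Phi_2$ enters exactly once.

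By duality, and since we may take $f,g\ge 0$, it suffices to bound $\int_{\R^d}|T(fw)|\,g\,\sigma\,dx$ by a constant times $\|f\|_{L^2(w)}\|g\|_{L^2(\sigma)}$. Here I would invoke the sparse domination of Calder\'on--Zygmund operators in its dualized form (see \cite{lerner}): there are finitely many sparse families $\mathcal{S}_1,\dots,\mathcal{S}_K$, depending only on $d$ and $T$ --- collections of cubes admitting pairwise disjoint subsets $E_Q\subseteq Q$ with $|E_Q|\ge\tfrac12|Q|$ --- such that
\[
\int_{\R^d}|T(fw)|\,g\,\sigma\,dx\lesssim\sum_{j=1}^{K}\sum_{Q\in\mathcal{S}_j}\langle fw\rangle_Q\,\langle g\sigma\rangle_Q\,|Q|,\qquad \langle h\rangle_Q:=\frac{1}{|Q|}\int_Q h\,dx.
\]
Everything thus reduces to the uniform bound $\sum_{Q\in\mathcal{S}}\langle fw\rangle_Q\,\langle g\sigma\rangle_Q\,|Q|\lesssim\|f\|_{L^2(w)}\,\|g\|_{L^2(\sigma)}$ over all sparse families $\mathcal{S}$.

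The key device is to absorb the square roots coming from the $L^2$-weighting into the Young functions: set $\Psi_i(t):=\Phi_i(t^2)$, which is again a Young function, and let $\bar\Psi_i$ be its complementary function ($i=1,2$). For $Q\in\mathcal{S}$ I would write $\langle fw\rangle_Q=\langle (f^2w)^{1/2}\cdot w^{1/2}\rangle_Q$ and apply the generalized H\"older inequality for Orlicz norms to the complementary pair $(\bar\Psi_1,\Psi_1)$; since $\|w^{1/2}\|_{L^{\Psi_1}(Q)}=\|w\|_{L^{\Phi_1}(Q)}^{1/2}$, this yields
\[
\langle fw\rangle_Q\le 2\,\|(f^2w)^{1/2}\|_{L^{\bar\Psi_1}(Q)}\,\|w\|_{L^{\Phi_1}(Q)}^{1/2},\qquad \langle g\sigma\rangle_Q\le 2\,\|(g^2\sigma)^{1/2}\|_{L^{\bar\Psi_2}(Q)}\,\|\sigma\|_{L^{\Phi_2}(Q)}^{1/2}.
\]
Bounding $\|w\|_{L^{\Phi_1}(Q)}^{1/2}\|\sigma\|_{L^{\Phi_2}(Q)}^{1/2}$ by the square root of the supremum in \eqref{bump condition}, then replacing $|Q|$ by $2|E_Q|$ and using $\|h\|_{L^{\bar\Psi_i}(Q)}\le M_{\bar\Psi_i}h(x)$ for $x\in Q$ --- where $M_Ah(x):=\sup_{Q\ni x}\|h\|_{L^A(Q)}$ is the Orlicz maximal operator --- the disjointness of the $E_Q$ and the Cauchy--Schwarz inequality lead to
\[
\sum_{Q\in\mathcal{S}}\langle fw\rangle_Q\,\langle g\sigma\rangle_Q\,|Q|\lesssim\int_{\R^d}M_{\bar\Psi_1}((f^2w)^{1/2})\,M_{\bar\Psi_2}((g^2\sigma)^{1/2})\,dx\le\|M_{\bar\Psi_1}((f^2w)^{1/2})\|_{L^2}\,\|M_{\bar\Psi_2}((g^2\sigma)^{1/2})\|_{L^2}.
\]

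It remains only to know that $M_{\bar\Psi_i}$ is bounded on $L^2(\R^d)$, for then $\|M_{\bar\Psi_i}((f^2w)^{1/2})\|_{L^2}\lesssim\|(f^2w)^{1/2}\|_{L^2}=\|f\|_{L^2(w)}$ (and similarly with $g$), which finishes the proof. By the Cruz-Uribe--P\'erez characterization (see \cite{extrapolation}), $M_{\bar\Psi_i}\colon L^2\to L^2$ is bounded if and only if $\bar\Psi_i\in B_2$, equivalently $\int_c^\infty\Psi_i(t)^{-1}\,t\,dt<\infty$; since $\Psi_i(t)=\Phi_i(t^2)$, the substitution $u=t^2$ turns this into $\int_c^\infty\Phi_i(u)^{-1}\,du<\infty$, precisely the hypothesis. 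I expect this last step --- choosing the ``squared'' Young functions and matching the $B_2$ condition for $\bar\Psi_i$ against the integrability condition for $\Phi_i$, together with the routine but somewhat delicate bookkeeping needed for general, not necessarily doubling, Young functions --- to be the main point requiring care; the passage to sparse forms and the Orlicz--H\"older step are by now standard. Alternatively, one could dispense with the sparse reduction and argue via a Bellman function as in \cite{nazarov--reznikov--treil--volberg}, but the route above seems more transparent.
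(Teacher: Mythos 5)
The paper does not prove Theorem \ref{thm: bump}: it is quoted as a known result, with the two cited proofs being the Bellman-function argument of \cite{nazarov--reznikov--treil--volberg} and the sparse-domination argument of \cite{lerner}. Your sketch is essentially the latter, and it is correct: the reduction to bilinear sparse forms, the Orlicz--H\"older step with $\Psi_i(t)=\Phi_i(t^2)$ and the identity $\Vert w^{1/2}\Vert\ci{L^{\Psi_1}(Q)}=\Vert w\Vert\ci{L^{\Phi_1}(Q)}^{1/2}$, and the final appeal to the P\'erez characterization of the $L^2$-boundedness of $M\ci{\bar\Psi_i}$ via $\bar\Psi_i\in B_2$, which under the substitution $u=t^2$ is exactly the hypothesis $\int_c^{\infty}\Phi_i(u)^{-1}\,du<\infty$, all check out. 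Two minor remarks: the pointwise domination of Theorem \ref{thm: sparse domination} (single sparse family depending on $f$) already suffices here, so the finitely-many-lattices version you invoke is not needed; and you correctly flag the one place requiring care, namely that the sufficiency direction of the $B_2$ criterion for general (not necessarily doubling) Young functions and the equivalence $\bar\Psi\in B_2\Leftrightarrow\int_c^\infty t\,\Psi(t)^{-1}dt<\infty$ must be taken in the form valid without extra regularity, as in \cite{extrapolation}.
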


\subsection{Separated bump conjecture and motivation for main results} It is natural to ask whether it is possible to separate the two bumps appearing in the supremum in \eqref{bump condition} in Theorem \ref{thm: bump}. Cruz-Uribe, Reznikov and Volberg \cite{cruz-uribe--reznikov-volberg} conjectured (in a slightly different, but equivalent for sufficiently regular Young functions, form) the following, which is one version of the ``separated bump'' conjecture (for $p=2$). Denote $\La w\Ra\ci{Q}:=\frac{1}{|Q|}\int_{Q}w(x)dx$.

\begin{conj}\label{conj: separated bump}
Let $\Phi_1,\Phi_2$ be Young functions such that $\int_{c}^{\infty}\frac{1}{\Phi_i(x)}dx<\infty,~i=1,2$, for some $c>0$. Let $w,\sigma$ be weights on $\R^d$ satisfying
\begin{equation}
\label{separated bump condition}
\sup_{Q}\Vert w\Vert\ci{L^{\Phi_1}(Q)}\La\sigma\Ra\ci{Q}<\infty,\qquad\sup_{Q}\La w\Ra\ci{Q}\Vert\sigma\Vert\ci{L^{\Phi_2}(Q)}<\infty,
\end{equation}
where both suprema are taken over all cubes $Q$ in $\R^d$. Then, for any \cz operator $T$, the operator $T(\fdot w)$ is bounded from $L^2(w)$ into $L^2(\sigma)$.
\end{conj}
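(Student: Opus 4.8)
The plan is to reduce Conjecture \ref{conj: separated bump} to a bilinear estimate for sparse operators and then attack that estimate with a two-parameter (``parallel'') corona decomposition. First I would invoke the pointwise sparse domination of \cz operators: there are finitely many sparse families $\mathcal S$ with $|T(fw)|\lesssim\sum_{\mathcal S}\sum_{Q\in\mathcal S}\La fw\Ra\ci Q\mathbf 1\ci Q$, so, uniformly over the sparseness constant, it suffices to bound each sparse operator $A\ci{\mathcal S}(fw)=\sum_{Q\in\mathcal S}\La fw\Ra\ci Q\mathbf 1\ci Q$ from $L^2(w)$ into $L^2(\sigma)$. Writing $\La fw\Ra\ci Q=\La w\Ra\ci Q\La f\Ra^{w}\ci Q$, where $\La h\Ra^{\mu}\ci Q$ denotes the $\mu$-average of $h$ over $Q$, and dualizing against $g\in L^2(\sigma)$, this is equivalent to
\[
\sum_{Q\in\mathcal S}\La w\Ra\ci Q\La\sigma\Ra\ci Q\,\La f\Ra^{w}\ci Q\,\La g\Ra^{\sigma}\ci Q\,|Q|\;\lesssim\;\Vert f\Vert\ci{L^2(w)}\Vert g\Vert\ci{L^2(\sigma)}
\]
for all nonnegative $f,g$.

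Next I would record how the integrability hypothesis enters: $\int_c^\infty \frac{dt}{\Phi_i(t)}<\infty$ is exactly the statement that the complementary Young function $\bar\Phi_i$ satisfies P\'erez's $B_2$ condition, so $M\ci{\bar\Phi_1}$ and $M\ci{\bar\Phi_2}$ are bounded on $L^2(\R^d)$; through the generalized H\"older inequality $\frac{1}{|Q|}\int_Q |hw|\le\Vert h\Vert\ci{L^{\bar\Phi_1}(Q)}\Vert w\Vert\ci{L^{\Phi_1}(Q)}$ this converts the first condition in \eqref{separated bump condition} into a weighted Orlicz-maximal (Carleson) embedding, and symmetrically the second condition for $\sigma$. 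I would then build two stopping families of $\mathcal S$-principal cubes: $\mathcal F$, adapted to $f$ and the $w$-averages (stopping where $\La f\Ra^{w}$ roughly doubles), and $\mathcal G$, adapted to $g$ and $\sigma$; both are sparse with respect to $w$, resp.\ $\sigma$, so the discrete Carleson embeddings $\sum_{F\in\mathcal F}(\La f\Ra^{w}\ci F)^{2}w(F)\lesssim\Vert f\Vert\ci{L^2(w)}^2$ and its $\mathcal G$-analogue hold. Projecting each $Q\in\mathcal S$ to $F=\pi\ci{\mathcal F}(Q)$ and $G=\pi\ci{\mathcal G}(Q)$ (necessarily nested), the bilinear form splits into a ``synchronized'' part, where $F$ and $G$ are essentially the same, and an ``off-diagonal'' part, where one strictly contains the other.

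On the synchronized part I would estimate the inner sum $\sum_{Q\text{ in the common corona of }F}\La w\Ra\ci Q\La\sigma\Ra\ci Q|Q|$ by feeding both conditions of \eqref{separated bump condition} into the Orlicz-maximal embeddings above, together with the geometric smallness of sparse cubes inside a single corona, to produce the factor needed to close the estimate by Cauchy--Schwarz against the two discrete Carleson embeddings. The genuine difficulty is the off-diagonal part: there the $w$-corona attached to $F$ and the $\sigma$-corona attached to $G$ live at different scales and are ``out of sync'', and neither condition in \eqref{separated bump condition} alone controls the resulting sum --- this is precisely the interaction that disappears in the joint-bump Theorem \ref{thm: bump}, where a single pointwise inequality $\Vert w\Vert\ci{L^{\Phi_1}(Q)}\Vert\sigma\Vert\ci{L^{\Phi_2}(Q)}\lesssim 1$ may be spent at every scale. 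To handle it I would iterate the stopping construction, at each generation paying a quantitative loss for transporting a bump from the outer cube to the inner one and then summing the resulting series using the decay encoded in $\int_c^\infty \frac{dt}{\Phi_i(t)}<\infty$; this is the mechanism underlying the known ``log-bump'' and ``loglog-bump'' partial results. Making this iteration converge \emph{uniformly}, without an extra logarithmic thickening of one bump, is the main obstacle, and is the point at which the conjecture as stated remains open.
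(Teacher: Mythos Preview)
The statement you are trying to prove is labeled a \emph{Conjecture} in the paper, and the paper does not prove it; on the contrary, the paper treats it throughout as an open problem (it only establishes special cases due to Cruz-Uribe--Reznikov--Volberg and Treil--Volberg, and its main contribution is to show that one natural route toward the conjecture, via uniform two-weight bounds for sparse square functions, \emph{fails}). So there is no ``paper's own proof'' to compare your attempt against.

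Your outline is a fair summary of the standard approach that yields the known partial results (log-bumps, loglog-bumps): sparse domination, dualization to a bilinear sparse form, use of the $B_2$ condition for $\bar\Phi_i$ via P\'erez's theorem, and a parallel stopping-time/corona decomposition. You also correctly identify the crux: the ``off-diagonal'' interaction between the $w$-corona and the $\sigma$-corona is exactly what the separated conditions fail to control scale by scale, and your final sentence concedes that the iteration does not close without extra logarithmic slack. That concession is accurate --- it is precisely why the conjecture is open --- but it means your proposal is not a proof, only a restatement of the known obstruction. In particular, nothing in your write-up supplies the missing idea needed to sum the off-diagonal terms under the bare hypothesis $\int_c^\infty \Phi_i^{-1}\,dt<\infty$, so as it stands the argument has a genuine gap at the decisive step.
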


In \cite{cruz-uribe--reznikov-volberg}, Cruz-Uribe, Reznikov and Volberg establish a special case of Conjecture \ref{conj: separated bump} (these authors establish in \cite{cruz-uribe--reznikov-volberg} an analogous result for all $1<p<\infty$ as well).

\begin{thm}[Cruz-Uribe, Reznikov,Volberg \cite{cruz-uribe--reznikov-volberg}]\label{thm: partial separated bump}
Let $\Phi$ be a Young function such that one of the following holds.

(1) There exists $\delta>0$, such that
\begin{equation*}
\Phi(t)=t(\log(e+t))^{1+\delta},\qquad 0\leq t<\infty.
\end{equation*}

(2) There exists $\delta>1$, such that
\begin{equation*}
\Phi(t)=t\log(e+t)(\log(\log(e^e+t)))^{1+\delta},\qquad 0\leq t<\infty.
\end{equation*}
Let $w,\sigma$ be weights on $\R^d$ with $\sigma>0$ a.e. on $\R^d$, such that
\begin{equation*}
\sup_{Q}\Vert w\Vert\ci{L^{\Phi}(Q)}\La\sigma\Ra\ci{Q}<\infty,\qquad\sup_{Q}\La w\Ra\ci{Q}\Vert\sigma\Vert\ci{L^{\Phi}(Q)}<\infty,
\end{equation*}
where both suprema are taken over all cubes $Q$ in $\R^d$. Then, for any \cz operator $T$, the operator $T(\fdot w)$ is bounded from $L^2(w)$ into $L^2(\sigma)$.
\end{thm}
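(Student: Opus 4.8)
The plan is to pass from the \cz operator $T$ to a positive dyadic (sparse) model, reduce the two-weight estimate for the model to a bilinear form estimate, and control that form by a parallel corona (stopping-time) decomposition into which the two separated bump hypotheses are fed, one each. The assumption $\sigma>0$ a.e.\ is what lets us linearize and pass freely between $T(\fdot w)\colon L^2(w)\to L^2(\sigma)$ and its formal adjoint $T^{*}(\fdot\sigma)\colon L^2(\sigma)\to L^2(w)$. By the pointwise sparse domination of \cz operators it suffices to bound, uniformly over finitely many dyadic grids and all sparse families $\cS$ therein,
\[
\sum_{Q\in\cS}\La fw\Ra\ci Q\,\La g\sigma\Ra\ci Q\,|Q|\ \lesssim\ \|f\|\ci{L^2(w)}\,\|g\|\ci{L^2(\sigma)},\qquad f,g\ge 0 .
\]
This bilinear form is symmetric under interchanging $w$ and $\sigma$, and that interchange swaps the two separated bump hypotheses, so it is enough to treat one of them, say $\sup_Q\La w\Ra\ci Q\,\|\sigma\|\ci{L^{\Phi}(Q)}<\infty$. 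As a baseline we record the joint $A_2$ bound $\sup_Q\La w\Ra\ci Q\La\sigma\Ra\ci Q<\infty$, immediate from either hypothesis since Jensen's inequality gives $\La w\Ra\ci Q\le\Phi^{-1}(1)\,\|w\|\ci{L^{\Phi}(Q)}$.

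\emph{Parallel corona.} I would run the Nazarov--Treil--Volberg parallel stopping-time construction: principal cubes $\mathcal F$ for $f$ relative to $w\,dx$ and $\mathcal G$ for $g$ relative to $\sigma\,dx$, so that $\mathcal F$ is $w$-Carleson, $\mathcal G$ is $\sigma$-Carleson, the relevant weighted averages of $f$ and $g$ are comparable to constants $a_F$ and $b_G$ on the corona blocks, and $\sum_F a_F^{2}\,w(F)\lesssim\|f\|\ci{L^2(w)}^{2}$, $\sum_G b_G^{2}\,\sigma(G)\lesssim\|g\|\ci{L^2(\sigma)}^{2}$ by the one-weight Carleson embedding theorem --- refined, if needed, so as to also control the averages $\La w\Ra$ and $\La\sigma\Ra$ on each block. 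Splitting the bilinear form according to the stopping cubes $F(Q),G(Q)$ that contain a given $Q\in\cS$, bounding $\sum_Q|Q|\lesssim|R|$ over the $\cS$-cubes contained in a fixed cube $R$ by sparseness, and reducing by the symmetry above to the configuration $G(Q)\subseteq F(Q)$, one is left with a sum over the interleaved stopping tree of the two families. The two trees overlap --- a given cube can lie below many members of $\mathcal F$ --- and absorbing that overlap, which the naive $A_2$ bound cannot do, is the heart of the matter.

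\emph{Inserting the bump.} On each corona block one replaces a bare average $\La\sigma\Ra\ci Q$ at the stopping scale by the Orlicz bump $\|\sigma\|\ci{L^{\Phi}(Q)}$, invokes $\La w\Ra\ci Q\,\|\sigma\|\ci{L^{\Phi}(Q)}\lesssim 1$ together with the generalized Hölder inequality in Orlicz spaces (with the complementary Young function $\bar\Phi$, whose growth is matched to the integrability assumption on $\Phi$), and thereby gains a genuine factor over the $A_2$ baseline. Summing the resulting estimates over the interleaved tree leaves a numerical series whose convergence is governed by the finiteness of $\int_c^{\infty}dt/\Phi(t)$ \emph{with room to spare}: the two-fold overlap of the $\mathcal F$- and $\mathcal G$-trees effectively costs an iterated logarithm, so that the operative requirement becomes
\[
\int_c^{\infty}\frac{\log\log t}{\Phi(t)}\,dt<\infty ,
\]
which holds for $\Phi(t)=t\bigl(\log(e+t)\bigr)^{1+\delta}$ precisely when $\delta>0$, and for $\Phi(t)=t\log(e+t)\bigl(\log\log(e^{e}+t)\bigr)^{1+\delta}$ precisely when $\delta>1$. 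A final Cauchy--Schwarz step against $\sum_F a_F^{2}\,w(F)$ and $\sum_G b_G^{2}\,\sigma(G)$ closes the estimate by $\|f\|\ci{L^2(w)}\|g\|\ci{L^2(\sigma)}$, with constants uniform in $\cS$; the remaining symmetric part is treated identically using the other hypothesis $\sup_Q\|w\|\ci{L^{\Phi}(Q)}\La\sigma\Ra\ci Q<\infty$.

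\emph{Main obstacle.} The crux is the last step: a separated bump supplies the Orlicz norm of only one weight at a time, so the generalized Hölder inequality can be applied only once per corona block, and that single gain over $A_2$ must by itself absorb the overlap of the two stopping trees. Quantifying that overlap, and recognizing $L(\log L)^{1+\delta}$ --- respectively its double-logarithmic refinement with $\delta>1$ --- as the sharp threshold for the ensuing series to converge, uniformly in the sparseness constant of $\cS$, is the whole difficulty; it is also why the method stops short of the full Conjecture~\ref{conj: separated bump}. An alternative, conceptually lighter route would be to observe, following Treil--Volberg, that $L\log L$-type Orlicz bumps dominate suitable ``entropy'' bumps, and then to invoke the two-weight theorem under separated entropy bumps --- an approach closer in spirit to the tools used elsewhere in this paper.
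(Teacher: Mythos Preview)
The paper does not prove this theorem. Theorem~\ref{thm: partial separated bump} is stated in the introduction with attribution to Cruz-Uribe, Reznikov and Volberg \cite{cruz-uribe--reznikov-volberg} and is quoted purely as background motivating the separated bump conjecture; no argument for it appears anywhere in the paper, and the result is not used in any of the paper's own proofs. There is therefore nothing in this paper to compare your proposal against.

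That said, your sketch is a faithful outline of the approach in the cited literature --- the original \cite{cruz-uribe--reznikov-volberg}, and especially Lacey \cite{lacey}, who isolates precisely the iterated-logarithm integrability condition $\int_c^{\infty}\frac{\log\log t}{\Phi(t)}\,dt<\infty$ that you arrive at. The reduction to sparse forms via pointwise domination, the parallel corona with principal cubes for $f$ in $w\,dx$ and for $g$ in $\sigma\,dx$, the single application of the Orlicz H\"older inequality per block using only one of the two bump hypotheses at a time, and the summation over the interleaved stopping tree whose convergence is governed by that iterated-log condition --- all of this matches the published arguments. Your verification that the condition recovers exactly $\delta>0$ in case~(1) and $\delta>1$ in case~(2) is correct. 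The alternative entropy-bump route you mention at the end is also viable and is the one closer in spirit to Treil--Volberg \cite{entropy} and to the tools the present paper actually uses.
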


Numerous other partial results and extensions of them to more general contexts have since been achieved regarding Conjecture \ref{conj: separated bump}, see for instance \cite{log-bumps}, \cite{lacey}, \cite{rahm-spencer}, \cite{entropy}. The proofs of all these results rely on reducing the problem to establishing two-weight bounds for sparse operators, via domination of \cz operators by sparse operators. The latter technique, having been introduced by Lerner \cite{lerner}, has become standard for proving weighted estimates in recent years. For a sparse family $\cS$ of cubes in $\R^d$ (see Subsection \ref{s: sparse definitions} for the relevant definitions), we define the sparse (Lerner) operator corresponding to $\cS$ by
\begin{equation}
\label{sparse-Lerner}
\mathcal{A}\ci{\cS}f:=\sum_{Q\in\cS}\La |f|\Ra\ci{Q}1\ci{Q},~\forall f\in L^1\ti{loc}(\R^d).
\end{equation}
Various results (and in various senses) of domination of \cz operators by sparse operators have been proved, even extending this domination to other classes of operators of interest in harmonic analysis and to the vector-valued setting. We mention just one such result, which motivates the main results of the present paper. Its proof can be found (for instance) in \cite{lacey-A_2} or \cite{lerner-sparse-pointwise} (see also \cite{convex_body} for an extension to the vector-valued setting).

\begin{thm}\label{thm: sparse domination}
For any \cz operator $T$ on $\R^d$, for any compactly supported integrable function $f$ on $\R^d$, there exists an $\eta$-sparse family $\cS$ of cubes in $\R^d$ (in the sense of Definition \ref{df: sparse}) depending on $d,T$ and the function $f$, where $\eta=1-\frac{3^{-d}}{2}$, such that
\begin{equation*}
|Tf|\leq C\mathcal{A}\ci{\cS}f\qquad\text{a.e. on }\R^d,
\end{equation*}
where $C=C(d,T)$ depends on $d,T$ but not on the function $f$.
\end{thm}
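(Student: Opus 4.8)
The plan is to run the recursive stopping‑time argument that is by now standard for sparse domination, the main device being the \emph{grand maximal truncated operator}
\[
\mathcal{M}_T g(x):=\sup_{Q\ni x}\ \operatorname*{ess\,sup}_{\xi\in Q}\ \bigl|T\bigl(g 1\ci{\R^d\setminus 3Q}\bigr)(\xi)\bigr|,
\]
the supremum being over all cubes $Q$ containing $x$. The first step is to prove that $\mathcal{M}_T$ is of weak type $(1,1)$, with bound depending only on $d$ and $T$. For a cube $Q\ni x$ and $\xi,\xi'\in Q$ one writes $T(g 1\ci{\R^d\setminus 3Q})(\xi)=\bigl[T(g 1\ci{\R^d\setminus 3Q})(\xi)-T(g 1\ci{\R^d\setminus 3Q})(\xi')\bigr]+Tg(\xi')-T(g 1\ci{3Q})(\xi')$; the bracketed difference is estimated, \emph{using the Calder\'on--Zygmund regularity of the kernel} --- summing over the dyadic annuli about $Q$ against the Dini modulus of $K$, the one and only place the smoothness of $K$ is used --- by a constant (depending on $d,T$) times $Mg(\xi')$, $M$ being the Hardy--Littlewood maximal operator. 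Taking the essential infimum over $\xi'\in Q$ and using the weak type $(1,1)$ bound for $T$ (valid for any Calder\'on--Zygmund operator) to control $\operatorname*{ess\,inf}_{\xi'\in Q}|T(g 1\ci{3Q})(\xi')|\lesssim\La |g|\Ra\ci{3Q}\le Mg(x)$, one obtains $\mathcal{M}_T g(x)\lesssim_{d,T} Mg(x)+\sup_{Q\ni x}\operatorname*{ess\,inf}_{\xi\in Q}|Tg(\xi)|$. Both terms on the right are of weak type $(1,1)$: the first since $M$ is, the second since $Tg\in L^{1,\infty}$ and the distribution function of $\sup_{Q\ni x}\operatorname*{ess\,inf}_{\xi\in Q}|Tg(\xi)|$ is dominated by that of $|Tg|$.

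Next, fix the compactly supported $f\in L^1$ and a dyadic cube $Q_0\supseteq\operatorname{supp}f$, and build a collection $\mathcal{F}\subseteq\cD(Q_0)$ (dyadic subcubes of $Q_0$) recursively: put $Q_0\in\mathcal{F}$, and declare the $\mathcal{F}$‑children of $Q\in\mathcal{F}$ to be the maximal dyadic subcubes of $Q$ contained in
\[
U_Q:=\bigl\{x\in Q:\ M\bigl(f 1\ci{3Q}\bigr)(x)>C_1\La |f|\Ra\ci{3Q}\bigr\}\ \cup\ \bigl\{x\in Q:\ \mathcal{M}_T\bigl(f 1\ci{3Q}\bigr)(x)>C_1\La |f|\Ra\ci{3Q}\bigr\}.
\]
Since both superlevel sets are open, $U_Q$ is the disjoint union of its $\mathcal{F}$‑children; by the weak type $(1,1)$ bounds for $M$ and $\mathcal{M}_T$ one may fix $C_1=C_1(d,T)$ so large that $|U_Q|\le\frac{3^{-d}}{2}|Q|$ for every $Q\in\mathcal{F}$. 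This choice is exactly what fixes the sparseness constant: for each $Q\in\mathcal{F}$ the part of $3Q$ covered by the $3$‑dilates of the $\mathcal{F}$‑children of $Q$ has measure at most $3^d|U_Q|\le\frac{3^{-d}}{2}|3Q|$, so $\cS:=\{3Q:Q\in\mathcal{F}\}$ is $\eta$‑sparse with $\eta=1-\frac{3^{-d}}{2}$ in the sense of Definition~\ref{df: sparse}. (The set of $x\in Q_0$ lying in infinitely many cubes of $\mathcal{F}$ is null, since at each level the children occupy at most the fraction $\frac{3^{-d}}{2}$ of a cube.)

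The pointwise bound on $Q_0$ then follows by telescoping. For a.e.\ $x\in Q_0$ the cubes of $\mathcal{F}$ containing $x$ form a finite decreasing chain $Q_0\supsetneq Q_1\supsetneq\cdots\supsetneq Q_m\ni x$, with $x$ in no $\mathcal{F}$‑child of $Q_m$, and
\[
Tf(x)=T\bigl(f 1\ci{3Q_0}\bigr)(x)=\sum_{i=0}^{m-1}T\bigl(f 1\ci{3Q_i\setminus 3Q_{i+1}}\bigr)(x)+T\bigl(f 1\ci{3Q_m}\bigr)(x).
\]
For the last term one shrinks a dyadic cube $Q'\ni x$ to $x$ inside $Q_m$: since $x\notin U_{Q_m}$ one has $\bigl|T\bigl((f 1\ci{3Q_m}) 1\ci{\R^d\setminus 3Q'}\bigr)(x)\bigr|\le\mathcal{M}_T(f 1\ci{3Q_m})(x)\le C_1\La|f|\Ra\ci{3Q_m}$, while $T(f 1\ci{3Q'})(x)\to 0$ along a subsequence by continuity of $T\colon L^1\to L^{1,\infty}$, so $|T(f 1\ci{3Q_m})(x)|\le C_1\La|f|\Ra\ci{3Q_m}$. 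For each middle term one splits $3Q_i\setminus 3Q_{i+1}$ at the dyadic parent $\widehat Q$ of $Q_{i+1}$: the piece supported outside $3\widehat Q$ is $\le C_1\La|f|\Ra\ci{3Q_i}$ because $\widehat Q$, being strictly larger than the maximal stopping cube $Q_{i+1}$, is not contained in $U_{Q_i}$ and hence carries a point at which both the average of $f$ over $3\widehat Q$ and $\mathcal{M}_T(f 1\ci{3Q_i})$ are $\le C_1\La|f|\Ra\ci{3Q_i}$; the remaining annular piece is supported in a fixed dilate of $Q_{i+1}$ at distance $\gtrsim\ell(Q_{i+1})$ from $x$, so the plain size estimate on $K$ bounds it by a constant times $\La|f|\Ra\ci{3\widehat Q}\le C_1\La|f|\Ra\ci{3Q_i}$. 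Summing along the chain yields $|Tf(x)|\le C(d,T)\sum_{Q\in\mathcal{F}}\La|f|\Ra\ci{3Q} 1\ci{Q}(x)\le C(d,T)\,\mathcal{A}\ci{\cS}f(x)$ for a.e.\ $x\in Q_0$; the passage from $Q_0$ to all of $\R^d$ is the routine one (exhaust $\R^d$ by dyadic cubes containing $\operatorname{supp}f$ and extract a limiting sparse family). The main work is Step~1 --- the weak type $(1,1)$ bound for $\mathcal{M}_T$, the sole point where the Calder\'on--Zygmund smoothness of the kernel is needed --- together with the telescoping estimate, where the delicate point is making the $3Q$‑truncations interact correctly with the maximality of the stopping cubes.
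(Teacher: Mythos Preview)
The paper does not give its own proof of this theorem: immediately before the statement it says ``Its proof can be found (for instance) in \cite{lacey-A_2} or \cite{lerner-sparse-pointwise},'' and then moves on. So there is nothing in the paper to compare your argument against.

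That said, your proposal is a faithful outline of Lerner's argument from \cite{lerner-sparse-pointwise} (the grand maximal truncated operator $\mathcal{M}_T$, its weak type $(1,1)$ bound via the pointwise estimate $\mathcal{M}_T g\lesssim Mg+T^{\#}g$, the recursive stopping on superlevel sets, and the telescoping along the chain of stopping cubes), which is exactly one of the two references the paper cites. A couple of small remarks: your verification of the $\eta$-sparseness of $\cS=\{3Q:Q\in\mathcal{F}\}$ is cleaner if you simply take $E(3Q):=Q\setminus U_Q$; these sets are pairwise disjoint (any $\mathcal{F}$-descendant of $Q$ lies inside $U_Q$) and $|E(3Q)|\ge(1-\tfrac{3^{-d}}{2})|Q|\ge\tfrac12|Q|=\tfrac{3^{-d}}{2}|3Q|=(1-\eta)|3Q|$, which is exactly what Definition~\ref{df: sparse} asks. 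Also, the ``routine'' passage from a single $Q_0$ to all of $\R^d$ does require a bit of care (in Lerner's paper one either takes a limit along an exhaustion or lands in finitely many shifted dyadic lattices), but this is indeed standard and does not affect the sparseness constant.
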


One can also consider generalized sparse operators. Namely, for any $1\leq p<\infty$ and for any sparse family $\cS$ of cubes in $\R^d$, define the sparse $p$-function (sparse square function if $p=2$) corresponding to $\cS$ by
\begin{equation}
\label{sparse-p}
\mathcal{A}\ci{\cS,p}f:=\left(\sum_{Q\in\cS}(\La|f|\Ra\ci{Q})^{p}1\ci{Q}\right)^{1/p},~\forall f\in L^1\ti{loc}(\R^d),
\end{equation}
and the sparse maximal function corresponding to $\cS$ by
\begin{equation*}
\mathcal{M}\ci{\cS}f:=\sup_{Q\in\cS}\La|f|\Ra\ci{Q}1\ci{Q},~\forall f\in L^1\ti{loc}(\R^d).
\end{equation*}
It is worth noting that the Hardy--Littlewood maximal function on $\R^d$ admits pointwise sparse domination by sparse maximal functions in the sense of Theorem \ref{thm: sparse domination}. Although the separated bump conjecture has not been proved yet in full generality for sparse (Lerner) operators of the type \eqref{sparse-Lerner}, it turns out that the following strengthened form of the conjecture is true for sparse square functions,  with only mild additional assumptions of regularity for the involved Young functions. It follows immediately by combining Lemma 3.3 and Theorem 4.2 in \cite{entropy}, due to Treil and Volberg.

\begin{prop}[Treil--Volberg \cite{entropy}]\label{p: entropy-sparse-square}
Let $\Phi$ be a Young function with $\int^{\infty}_{c}\frac{1}{\Phi(t)}dt<\infty$ for some $c>0$. Assume in addition that $\Phi$ is doubling and that the function $t\mapsto \Phi(t)/(t\log t)$ is increasing for sufficiently large $t$. Let $w,\sigma$ be weights on $\R^d$ such that
\begin{equation*}
\sup_{Q}\Vert w\Vert\ci{L^{\Phi}(Q)}\La \sigma\Ra\ci{Q}:=K<\infty,
\end{equation*}
where the supremum is taken over all cubes $Q$ in $\R^d$. Then, for any $0<\eta<1$, for any $\eta$-sparse family $\cS$ of cubes in $\R^d$, there holds
\begin{equation*}
\Vert \mathcal{A}\ci{\cS,2}(\fdot w)\Vert\ci{L^2(w)\rightarrow L^2(\sigma)}\leq C(\Phi)c(\eta)K^{1/2}.
\end{equation*}
\end{prop}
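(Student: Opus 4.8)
The plan is to deduce the statement from two results of Treil and Volberg in \cite{entropy}. Set $\Phi_0(t):=t\log(e+t)$, so that $L^{\Phi_0}(Q)=L\log L(Q)$. The first ingredient is the norm domination
\begin{equation*}
\Vert h\Vert\ci{L^{\Phi_0}(Q)}\leq C(\Phi)\,\Vert h\Vert\ci{L^{\Phi}(Q)}\qquad\text{for every cube }Q\text{ and every }h,
\end{equation*}
which is \cite[Lemma 3.3]{entropy}. I would prove it by observing that $t\mapsto\Phi(t)/(t\log t)$ is positive, continuous and, by hypothesis, eventually increasing, hence bounded below on $[t_0,\infty)$ for some $t_0>1$, so that $\Phi(t)\geq c(\Phi)\Phi_0(t)$ for all large $t$; the Luxemburg-norm comparison then follows upon normalizing $\Vert h\Vert\ci{L^{\Phi}(Q)}=1$, splitting $Q$ into $\{|h|>t_0\}$ and $\{|h|\leq t_0\}$, and using convexity of $\Phi_0$ together with $\Phi_0(0)=0$. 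Applying this with $h=w$ and taking the supremum over $Q$, the bump hypothesis turns into the $L\log L$-bump
\begin{equation*}
\sup_{Q}\Vert w\Vert\ci{L^{\Phi_0}(Q)}\La\sigma\Ra\ci{Q}\leq C(\Phi)\,K.
\end{equation*}
(The integrability hypothesis on $\Phi$ plays no role in this particular deduction --- it cannot, since $\Phi_0$ itself fails it --- and is imposed only to keep $\Phi$ an admissible bump in the sense of Theorem \ref{thm: bump}.)

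It then remains to invoke the second ingredient, \cite[Theorem 4.2]{entropy}, according to which an $L\log L$-bump already suffices for uniform two-weight $L^2$ bounds for sparse square functions: for every $0<\eta<1$ and every $\eta$-sparse family $\cS$,
\begin{equation*}
\Vert\mathcal{A}\ci{\cS,2}(\fdot w)\Vert\ci{L^2(w)\rightarrow L^2(\sigma)}\leq c(\eta)\Big(\sup_{Q}\Vert w\Vert\ci{L^{\Phi_0}(Q)}\La\sigma\Ra\ci{Q}\Big)^{1/2}.
\end{equation*}
Together with the first step this gives $\Vert\mathcal{A}\ci{\cS,2}(\fdot w)\Vert\ci{L^2(w)\rightarrow L^2(\sigma)}\leq c(\eta)\big(C(\Phi)K\big)^{1/2}=C(\Phi)c(\eta)K^{1/2}$, which is the claim. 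To reprove \cite[Theorem 4.2]{entropy} I would exploit the positive, quadratic structure of $\mathcal{A}\ci{\cS,2}$: since $w\geq0$, for $f\geq0$ one has $\Vert\mathcal{A}\ci{\cS,2}(fw)\Vert\ci{L^2(\sigma)}^2=\sum_{Q\in\cS}\La fw\Ra\ci{Q}^2\sigma(Q)$, writing $\sigma(Q):=\int_{Q}\sigma$; and --- as is the case for dyadic square functions --- this positive bilinear form should reduce to a single Sawyer-type testing condition, $\sum_{Q\in\cS,\,Q\subseteq R}\La w\Ra\ci{Q}^2\sigma(Q)\lesssim_{\eta}\big(\sup_{Q}\Vert w\Vert\ci{L^{\Phi_0}(Q)}\La\sigma\Ra\ci{Q}\big)w(R)$, uniformly over cubes $R$; the latter would be established through a stopping-time decomposition of $\cS$ in which the logarithmic gain encoded in the Luxemburg norm $\Vert w\Vert\ci{L^{\Phi_0}(Q)}$ absorbs the number of stopping generations.

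The only genuine difficulty lies in this last step, i.e.\ in \cite[Theorem 4.2]{entropy}, and specifically in extracting that $L\log L$ gain quantitatively inside the stopping-time argument. A crude estimate using only $\La w\Ra\ci{Q}\lesssim_{\Phi_0}\Vert w\Vert\ci{L^{\Phi_0}(Q)}$ loses, because an $\eta$-sparse family is a Carleson packing with respect to Lebesgue measure but in general not with respect to $w\,dx$ or $\sigma\,dx$; it is the square in $\La w\Ra\ci{Q}^2$ together with the entropy built into $\Vert w\Vert\ci{L^{\Phi_0}(Q)}$ that makes the packing sum converge. This is also the point at which sparse square functions behave strictly better than what is currently known for \cz operators, for which the corresponding separated-bump statement (Theorem \ref{thm: partial separated bump}) requires the stronger $L(\log L)^{1+\delta}$-type profile. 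The remaining ingredients --- the elementary domination of the first step, the reduction to the testing condition, and the accounting of the constants --- are routine.
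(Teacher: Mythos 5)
Your proposal is correct and follows the same route the paper takes: the paper's justification of Proposition \ref{p: entropy-sparse-square} is precisely to combine the Orlicz-to-$L\log L$ bump domination (Lemma 3.3/3.4 of \cite{entropy}, reproduced in Subsection \ref{s: comparison principles}) with the entropy-bump square-function bound of \cite[Theorem 4.2]{entropy}, exactly as you do. Your derivation of $\Phi(t)\gtrsim t\log t$ from the monotonicity of $t\mapsto\Phi(t)/(t\log t)$ rather than from the integrability condition is an immaterial variation, and deferring the stopping-time core to the cited Treil--Volberg theorem is legitimate here since the proposition is explicitly attributed to them.
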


In view of the above proposition, one might hope to prove the separated bump conjecture for $p=2$, and for mildly regular Young functions, by establishing that two-weight bounds for sparse square functions, uniform with respect to the sparseness constant of the underlying sparse family, and in both directions, imply two-weight estimates for singular integral operators. More precisely, one might conjecture the following.

\begin{conj}\label{conj: conjecture 1}
Let $w,\sigma$ be weights on $\R^d$ such that for any $0<\eta<1$, for any $\eta$-sparse family $\cS$ of cubes in $\R^d$, there holds
\begin{equation*}
\Vert \mathcal{A}\ci{\cS,2}(\fdot w)\Vert\ci{L^2(w)\rightarrow L^2(\sigma)},~\Vert \mathcal{A}\ci{\cS,2}(\fdot \sigma)\Vert\ci{L^2(\sigma)\rightarrow L^2(w)}\leq C=C(\eta,d).
\end{equation*}
Then, for any \cz operator $T$ on $\R^d$, the operator $T(\fdot w)$ is bounded from $L^2(w)$ into $L^2(\sigma)$.
\end{conj}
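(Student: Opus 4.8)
The plan is to reduce, via pointwise sparse domination, to a uniform two-weight bound for the sparse Lerner operators $\mathcal{A}\ci{\cS}$, and then to extract that bound from the two square-function hypotheses through the Sawyer-type testing characterization of two-weight inequalities for positive dyadic operators. By Theorem~\ref{thm: sparse domination}, for every compactly supported integrable $g$ there is an $\eta$-sparse family $\cS$, with $\eta=1-\frac{3^{-d}}{2}$ fixed, such that $|Tg|\le C(d,T)\,\mathcal{A}\ci{\cS}g$ a.e.; applying this to $g=fw$ and arguing by density, it suffices to prove
\begin{equation*}
\sup_{\cS}\Vert\mathcal{A}\ci{\cS}(\fdot w)\Vert\ci{L^2(w)\to L^2(\sigma)}<\infty,
\end{equation*}
the supremum over all $\eta$-sparse families of cubes (equivalently, since the hypothesis is assumed for every $0<\eta<1$, over all sparse families, with a constant allowed to depend on the sparseness constant).

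Since $g\mapsto\mathcal{A}\ci{\cS}(gw)$ is a positive dyadic operator, its $L^2(w)\to L^2(\sigma)$ operator norm is comparable, by the (parallel corona) characterization of two-weight $L^2$ bounds for such operators in terms of the two Sawyer testing conditions, to the sum
\begin{equation*}
\mathfrak{T}:=\sup_{Q}\frac{\Vert\mathcal{A}\ci{\cS}(1\ci{Q}w)\Vert\ci{L^2(\sigma)}}{w(Q)^{1/2}},\qquad\mathfrak{T}^{*}:=\sup_{Q}\frac{\Vert\mathcal{A}\ci{\cS}(1\ci{Q}\sigma)\Vert\ci{L^2(w)}}{\sigma(Q)^{1/2}},
\end{equation*}
the suprema over all cubes $Q$. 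The two hypotheses are symmetric under $w\leftrightarrow\sigma$, and boundedness of $T(\fdot w)$ from $L^2(w)$ to $L^2(\sigma)$ is equivalent to that of $T^{*}(\fdot\sigma)$ from $L^2(\sigma)$ to $L^2(w)$; hence it is enough to produce a uniform bound for $\mathfrak{T}$, the same argument then yielding $\mathfrak{T}^{*}$. Testing $\mathcal{A}\ci{\cS,2}(\fdot w)\colon L^2(w)\to L^2(\sigma)$ on the singleton family $\cS=\{Q\}$ already gives the two-weight $A_2$ bound $\La w\Ra\ci{Q}\La\sigma\Ra\ci{Q}\lesssim 1$ for every cube $Q$; combined with a routine geometric summation exploiting the sparseness of $\cS$, this shows that the ``global'' part $\sum_{P\in\cS,\,P\supsetneq Q}\La 1\ci{Q}w\Ra\ci{P}1\ci{P}$ of $\mathcal{A}\ci{\cS}(1\ci{Q}w)$ contributes $\lesssim w(Q)^{1/2}$ to $\mathfrak{T}$. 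Everything therefore reduces to the local testing inequality
\begin{equation*}
\int_{\R^d}\bigl(\mathcal{A}\ci{\cS_Q}w\bigr)^2\sigma\,dx\le C\,w(Q),\qquad\cS_Q:=\{P\in\cS:P\subseteq Q\},
\end{equation*}
uniformly in $Q$ and $\cS$.

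Now observe that the hypothesis $\mathcal{A}\ci{\cS,2}(\fdot w)\colon L^2(w)\to L^2(\sigma)$, tested on $1\ci{Q}$, yields precisely $\int(\mathcal{A}\ci{\cS_Q,2}w)^2\sigma\le C\,w(Q)$ --- the very inequality we want, but with the sparse square function $\mathcal{A}\ci{\cS_Q,2}$ in place of the Lerner operator $\mathcal{A}\ci{\cS_Q}$. Since $\mathcal{A}\ci{\cS}\ge\mathcal{A}\ci{\cS,2}$ pointwise, with the inequality typically far from equality, closing the argument amounts to recovering an $\ell^1$ sum from the corresponding $\ell^2$ sum. Running the usual stopping-time (corona) decomposition of $\cS_Q$ relative to $w$ --- with $\mathcal{F}$ the family of $w$-stopping cubes, so that $\La w\Ra\ci{P}\le 2\La w\Ra\ci{\pi(P)}$ where $\pi(P)$ is the minimal stopping cube containing $P$, while $\sum_{F'\in\mathcal{F},\,F'\subseteq F}w(F')\lesssim w(F)$ --- one has $\La w\Ra\ci{P}\approx\La w\Ra\ci{\pi(P)}$ on each corona, so that
\begin{equation*}
\mathcal{A}\ci{\cS_Q}w\approx\sum_{F\in\mathcal{F}}\La w\Ra\ci{F}\,N_F,\qquad\mathcal{A}\ci{\cS_Q,2}w\approx\Bigl(\sum_{F\in\mathcal{F}}\La w\Ra\ci{F}^{2}\,N_F\Bigr)^{1/2},
\end{equation*}
where $N_F(x):=\#\{P\in\cS_Q:\pi(P)=F,\,x\in P\}$ counts the cubes of $\cS$ stacked at $x$ inside the corona of $F$. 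When the $N_F$ are bounded the two right-hand sides are comparable and one is done, reducing exactly to the square-function testing supplied by the hypothesis; but for a general sparse $\cS$ the cubes may stack to unbounded depth inside a single $w$-corona, and a direct Cauchy--Schwarz comparison of the two sides loses a factor $\sup_x\#\{P\in\cS_Q:x\in P\}$, which is not under control.

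The main obstacle --- where I expect the real work to lie --- is to absorb this stacking loss, the only remaining resource being the second hypothesis $\mathcal{A}\ci{\cS,2}(\fdot\sigma)\colon L^2(\sigma)\to L^2(w)$. My plan is to linearize the local testing integral: since the cubes of $\cS_Q$ through a point are nested, one has the pointwise bound
\begin{equation*}
\bigl(\mathcal{A}\ci{\cS_Q}w\bigr)^2\le 2\sum_{P\in\cS_Q}\La w\Ra\ci{P}\,\La\mathcal{A}\ci{\cS_Q}w\Ra\ci{P}\,1\ci{P},
\end{equation*}
whence $\int(\mathcal{A}\ci{\cS_Q}w)^2\sigma\le 2\sum_{P\in\cS_Q}\La w\Ra\ci{P}\La\mathcal{A}\ci{\cS_Q}w\Ra\ci{P}\,\sigma(P)$. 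I would then superimpose a second, $\sigma$-adapted stopping-time decomposition, split the factor $\La\mathcal{A}\ci{\cS_Q}w\Ra\ci{P}$ into the part coming from cubes that contain the governing $\sigma$-stopping cube $S$ (a tail depending only on $S$, summed away using the $w$-Carleson property of $\mathcal{F}$ together with the $A_2$ bound) and the part from cubes contained in $S$, and try to recognize the latter --- after one further passage to a local square function --- as a quantity estimated by $\mathcal{A}\ci{\cS,2}(\fdot\sigma)\colon L^2(\sigma)\to L^2(w)$ tested on $1\ci{S}$. If the estimates for $\mathfrak{T}$ and $\mathfrak{T}^{*}$ can then be arranged to feed one another in a closed system of linear inequalities, a uniform bound for $\Vert\mathcal{A}\ci{\cS}(\fdot w)\Vert\ci{L^2(w)\to L^2(\sigma)}$, and hence Conjecture~\ref{conj: conjecture 1}, follows. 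The delicate issue, on which the whole argument hinges, is the interface of the $w$-corona and the $\sigma$-corona: one must ensure that the stacking numbers $N_F$ never survive into the final bound --- equivalently, that the Cauchy--Schwarz deficit between the $\ell^1$ Lerner operator and the $\ell^2$ square function is genuinely repaid by the dual square-function hypothesis and not merely by the direct one.
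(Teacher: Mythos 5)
You are attempting to \emph{prove} a statement that this paper exists to \emph{disprove}: Proposition \ref{p: main} with $p=2$ exhibits weights $\wt{w},\sigma$ on $\R$ for which both sparse square-function bounds of Conjecture \ref{conj: conjecture 1} hold with constants $C(\eta)$ uniform over all $\eta$-sparse families, and yet $H(\fdot\wt{w})$ is unbounded from $L^2(\wt{w})$ into $L^2(\sigma)$. So no completion of your argument is possible, and the obstacle you yourself flag as ``where I expect the real work to lie'' is not a technical difficulty but the exact point at which the implication fails. Your reduction is otherwise sound: sparse domination does reduce the conclusion to a uniform bound for the Lerner operators, the Sawyer-type testing characterization for positive dyadic operators is correct, the singleton-family test does give the two-weight $A_2$ condition, and the hypothesis tested on $1\ci{Q}$ does give the $\ell^2$ local testing $\int(\mathcal{A}\ci{\cS_Q,2}w)^2\sigma\leq Cw(Q)$. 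What cannot be done is to upgrade this to the $\ell^1$ testing $\int(\mathcal{A}\ci{\cS_Q}w)^2\sigma\leq Cw(Q)$ using only the dual square-function hypothesis: the counterexample satisfies \emph{both} $\ell^2$ hypotheses, in both directions, uniformly, and still fails the $\ell^1$ conclusion (indeed it must, since by sparse domination the $\ell^1$ bound would force boundedness of $H$).

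Quantitatively, the mechanism is visible in Section \ref{s: sparse-square-estimates}. For the Reguera--Thiele weights $w_k,\sigma_k$ the square-function testing sums grow only like $k\,w_k(L)$ (Proposition \ref{p: main estimate}), essentially because chains of stacked sparse intervals of length $\sim k$ enter the $\ell^2$ sum linearly; it is precisely the off-diagonal part of $(\mathcal{A}\ci{\cS_Q}w)^2=\sum_{I,I'}\La w\Ra\ci{I}\La w\Ra\ci{I'}1\ci{I\cap I'}$ --- the Cauchy--Schwarz deficit you identify, of size comparable to the stacking number --- that the $\ell^2$ condition does not see, and the Hilbert transform testing constant is of order $k^2$ rather than $k$ (estimate \eqref{norm Hilbert transform estimate}). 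The rescaling $\wt{w}_k:=k^{-r}w_k$ with $1<r<2$ then makes the square-function testing constants, and hence by Culiuc's theorem the full square-function norms, uniformly bounded in both directions, while $\Vert H\wt{w}_k\Vert\ci{L^2(\sigma_k)}\gtrsim k^{1-r/2}\Vert 1\ci{[0,1)}\Vert\ci{L^2(\wt{w}_k)}$ still blows up. Your proposed corona-on-corona bootstrap would therefore have to derive a false inequality from two true ones; the step that must break is the one where you hope the $\sigma$-adapted stopping time ``repays'' the stacking factors $N_F$ --- for these weights it does not, and cannot.
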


One might even be tempted to conjecture the following stronger result, in the spirit of sparse domination of \cz operators.

\begin{conj}\label{conj: conjecture 2}
For any \cz operator $T$ on $\R^d$, for any measurable bounded compactly supported functions $f,g$ on $\R^d$, there exist $\eta=\eta(d,T)\in(0,1)$ and an $\eta$-sparse family $\cS$ of cubes in $\R^d$ (in the sense of Definition \ref{df: sparse}) depending on $d,T$ and the functions $f,g$, such that
\begin{equation*}
|\La T(f),g\Ra|\leq C(\La \mathcal{A}\ci{\cS,2}f,|g|\Ra+\La |f|,\mathcal{A}\ci{\cS,2}g\Ra).
\end{equation*}
where $C=C(d,T)$ depends only on $d,T$.
\end{conj}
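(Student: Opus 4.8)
The natural first move is to run the machinery behind Theorem~\ref{thm: sparse domination}: fix a \cz operator $T$ and bounded compactly supported $f,g$, pass to a dyadic model (adjacent dyadic grids, Lerner's median-oscillation decomposition, or the Lacey--Hyt\"{o}nen stopping-cube argument), and build the sparse family $\cS$ from the stopping cubes on which the local averages of $|f|$ or $|g|$ roughly double. After the usual bookkeeping this yields the bilinear sparse bound
\[
|\La T(f),g\Ra|\le C\sum_{Q\in\cS}\La|f|\Ra\ci{Q}\La|g|\Ra\ci{Q}|Q|=C\La\mathcal{A}\ci{\cS}f,|g|\Ra .
\]
So the whole content of Conjecture~\ref{conj: conjecture 2} is whether this ``flat-average'' bilinear form can be replaced, possibly after refining or splitting $\cS$, by $\La\mathcal{A}\ci{\cS,2}f,|g|\Ra+\La|f|,\mathcal{A}\ci{\cS,2}g\Ra$.

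I would try to do this by splitting the sparse sum according to which factor is ``large'' on $Q$ and absorbing one square root of each product into an $\ell^2$ tail, running a geometric-series argument over the sparseness sets $E_Q\subseteq Q$ with $|E_Q|\gtrsim|Q|$ --- precisely the type of manipulation that lets one pass between $\mathcal{A}\ci{\cS}$ and $\mathcal{A}\ci{\cS,p}$ ($p>1$) \emph{in $L^q$ norm for $q$ large}. The obstruction is that here the comparison must survive being tested against an arbitrary nonnegative $g$, while pointwise one has only $\mathcal{A}\ci{\cS,2}h\le\mathcal{A}\ci{\cS}h$ --- the sparse square function is the \emph{smaller} of the two --- so $\La\mathcal{A}\ci{\cS,2}f,|g|\Ra+\La|f|,\mathcal{A}\ci{\cS,2}g\Ra\le 2\La\mathcal{A}\ci{\cS}f,|g|\Ra$, and Conjecture~\ref{conj: conjecture 2} is therefore demanding a \emph{strictly stronger} domination than Theorem~\ref{thm: sparse domination} delivers: no purely combinatorial rearrangement of the sparse sum produced there can recover it. To have any hope one would have to re-enter the proof of sparse domination itself and extract genuine quasi-orthogonality, replacing the flat averages attached to the stopping cubes by honest $\ell^2$ Carleson-type tails --- e.g.\ via a Chang--Wilson--Wolff or Rubio de Francia square-function estimate adapted to the stopping cubes of $T$.

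That last step is the main obstacle, and I do not expect it to go through: the cancellation carried by a generic \cz operator --- already for the Hilbert transform --- does not organize itself into the rigid two-sided square-function shape the inequality demands. This points toward \emph{disproving} the conjecture (and the weaker Conjecture~\ref{conj: conjecture 1}) rather than proving it, by testing against the pair of measures from the Reguera--Thiele construction of \cite{reguera-thiele}: for that pair the two-weight $L^2$ bound for the Hilbert transform fails, while one should still be able to verify the uniform-in-$\eta$, two-sided sparse square-function bounds in the hypothesis --- plausibly via Proposition~\ref{p: entropy-sparse-square} together with the $L\log L$-versus-Orlicz bump domination of Treil--Volberg in \cite{entropy}, after checking the integrability condition on the relevant Young function. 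That the full separated bump conjecture (Conjecture~\ref{conj: separated bump}) is itself known only in the restricted regimes of Theorem~\ref{thm: partial separated bump} is, in hindsight, consistent with this: the square-function hypothesis is simply too weak to encode the cancellation needed to bound singular integrals, so the honest outcome of this line of attack should be a counterexample, not a proof.
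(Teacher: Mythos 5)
You correctly recognize that this conjecture should be \emph{disproved} rather than proved --- the paper's main result is exactly that Conjecture \ref{conj: conjecture 2} (via Conjecture \ref{conj: conjecture 1}) is false for the Hilbert transform --- and your observations that $\mathcal{A}\ci{\cS,2}h\leq\mathcal{A}\ci{\cS}h$ pointwise, so that the conjectured domination is strictly stronger than Theorem \ref{thm: sparse domination}, and that the Reguera--Thiele construction is the natural source of a counterexample, both match the paper. However, the mechanism you propose for verifying the hypothesis of the counterexample --- namely, checking a separated Orlicz/$L\log L$ bump condition for the Reguera--Thiele pair and then invoking Proposition \ref{p: entropy-sparse-square} to get the two-sided sparse square-function bounds --- is precisely the route the paper shows is unavailable. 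Proposition \ref{p: submain} (proved via Lemma \ref{l: main entropy estimate} and the Treil--Volberg domination of $L\log L$ bumps by Orlicz bumps) states that for these very weights $\sup_I\Vert w\Vert\ci{L^{\Phi}(I)}\La\sigma\Ra\ci{I}=\infty$ for \emph{every} Young function $\Phi$ with $\int_c^{\infty}\Phi(t)^{-1}dt<\infty$, so the hypothesis of Proposition \ref{p: entropy-sparse-square} fails and it cannot be applied; this is in fact one of the paper's points (the example kills Conjecture \ref{conj: conjecture 1} without touching Conjecture \ref{conj: separated bump}).

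What is actually needed, and what constitutes the technical core of the paper, is a \emph{direct} verification of the two-weight sparse square-function bounds: one proves the Sawyer-type local testing conditions of Proposition \ref{p: local_testing_condition} by hand (splitting the sparse family according to its interaction with the intervals $K$, $J=K^m$, $I(J)$ of the construction, and using the Carleson Embedding Theorem and chain-counting estimates such as \eqref{estimate_chain}), then passes from testing conditions to full norm bounds via Culiuc's theorem and from martingale sparse families to general ones. Two further ingredients you omit are essential: the raw testing constants grow like $k$, so one must rescale $\wt{w}_k:=k^{-r}w_k$ with $\max(1,1/(p-1))<r<p'$ to make the sparse bounds uniform while keeping $\Vert H\wt{w}_k\Vert\ci{L^p(\sigma_k)}\gtrsim k^{1-r/p'}\Vert 1\ci{[0,1)}\Vert\ci{L^p(\wt{w}_k)}$ unbounded; and the ``direct sum of singularities'' argument is needed to assemble the $\wt{w}_k,\sigma_k$ into a single pair of weights on $\R$. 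Without these, your sketch identifies the right target but has no working proof of the counterexample's hypothesis.
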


\subsection{Main results} The main result of the present paper is that Conjecture \ref{conj: conjecture 1}, and thus also Conjecture \ref{conj: conjecture 2}, is false for the Hilbert transform on $\R$.

\begin{prop}\label{p: main}
For any $1<p<\infty$, there exist weights $w,\sigma$ on $\R$, such that for any $0<\eta<1$, for any $\eta$-sparse family $\cS$ of intervals in $\R$, one has the two-weight bounds
\begin{equation*}
\Vert \mathcal{A}\ci{\cS,p}(\fdot w)\Vert\ci{L^{p}(w)\rightarrow L^{p}(\sigma)},~\Vert \mathcal{A}\ci{\cS,p'}(\fdot \sigma)\Vert\ci{L^{p'}(\sigma)\rightarrow L^{p'}(w)}\leq C=C(\eta,p),
\end{equation*}
but the operator $H(\fdot w)$ is unbounded from $L^{p}(w)$ into $L^{p}(\sigma)$, where $H$ denotes the Hilbert transform.
\end{prop}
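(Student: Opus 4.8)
The plan is to exploit the classical Reguera--Thiele counterexample, which provides weights $w,\sigma$ on $\R$ (together with a measure-theoretic bound on a specific test function/indicator pair) for which the two-weight $A_2$ (more generally $A_p$) constant is finite but the two-weight bound for the Hilbert transform fails. Fix $1<p<\infty$. I would take as $w,\sigma$ precisely the Reguera--Thiele weights (or a minor rescaling thereof adapted to exponent $p$), so that the conclusion ``$H(\fdot w)$ is unbounded from $L^p(w)$ into $L^p(\sigma)$'' is inherited directly from \cite{reguera-thiele}; this disposes of the second half of Proposition \ref{p: main} with essentially no work. The entire content is therefore to verify the \emph{first} half: that these same weights satisfy the claimed uniform-in-$\eta$ two-weight bounds for sparse $p$-functions, in both dual directions. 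The crucial structural feature of the Reguera--Thiele construction that I would isolate and use is that $w$ and $\sigma$ live, up to harmless tails, on a geometrically nested sequence of scales where one of the two weights is, on each relevant cube, essentially a multiple of a \emph{bounded} weight — concretely, that $w$ (or $\sigma$) satisfies a testing-type or bumped estimate much stronger than plain $A_p$, so that the sparse square function, which is governed by $\ell^p$-summability of averages over a sparse tower, can be summed.

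The key steps, in order, are as follows. \emph{Step 1:} Recall and record the precise quantitative properties of the Reguera--Thiele weights: the finiteness of $\sup_Q \La w\Ra\ci{Q}\La\sigma\Ra\ci{Q}^{p-1}$, the failure of the Hilbert transform bound, and — this is the point I would extract with care — an upper bound of the form $\La w\Ra\ci{Q}\leq C\,\La\sigma\Ra\ci{Q}^{-1}$ holding only on the structured family of cubes, together with an $L\log L$-type or $L^r$-type integrability improvement on one side. \emph{Step 2:} Reduce the sparse bound to a single sparse family by the standard observation that an $\eta$-sparse family decomposes into $O_\eta(1)$ Carleson families, and a Carleson family of cubes can be handled by duality: $\Vert \mathcal{A}\ci{\cS,p}(f w)\Vert\ci{L^p(\sigma)}^p = \sum_{Q\in\cS}\La|f|w\Ra\ci{Q}^{\,p}\,\sigma(Q)$, which one estimates against $\Vert f\Vert\ci{L^p(w)}^p$ by Carleson embedding once one controls $\sum_{Q'\subseteq Q, Q'\in\cS}\sigma(Q')\lesssim \La w\Ra\ci{Q}^{-p'}\cdot(\text{something summable})\,w(Q)$ — i.e.\ a testing inequality for the sparse operator. \emph{Step 3:} Verify that testing inequality using the special geometry: on the Reguera--Thiele tower the averages $\La w\Ra\ci{Q_k}$ and $\La\sigma\Ra\ci{Q_k}$ along the nested sequence $Q_0\supset Q_1\supset\cdots$ decay/grow geometrically, so the offending sums are dominated by convergent geometric series with ratio depending only on the construction, and crucially \emph{not} on $\eta$; for non-tower cubes the weights are comparable to constants and the bound is the trivial $A_p$ one. \emph{Step 4:} Run the same argument with the roles of $w,\sigma$ and of $p,p'$ interchanged to get the dual bound $\Vert \mathcal{A}\ci{\cS,p'}(\fdot\sigma)\Vert\ci{L^{p'}(\sigma)\to L^{p'}(w)}\leq C(\eta,p)$; here one uses the \emph{other} half of the asymmetric integrability built into the construction. \emph{Step 5:} Collect the pieces: finitely many Carleson families, uniform constants, both directions — hence Proposition \ref{p: main}.

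The main obstacle I anticipate is Step 3, the sparse testing/Carleson-embedding estimate with a constant independent of $\eta$: the sparse $p$-function (unlike the sparse Lerner operator) is genuinely an $\ell^p$ object, so the summation must be done at the level of $\big(\sum_Q \La\cdot\Ra\ci{Q}^p 1\ci{Q}\big)^{1/p}$ rather than after linearization, and one cannot afford to lose a factor depending on the sparseness constant when passing from the tower to its sub-cubes. The resolution I have in mind is that in the Reguera--Thiele construction the sparse family's interaction with the weights is, on each scale, a \emph{single} dominant cube's worth — the tower is ``thin'' — so the $\ell^p$ sum over $\cS$ telescopes against the geometric decay of $\La w\Ra\ci{Q}^{p}\sigma(Q)$ (resp.\ $\La\sigma\Ra\ci{Q}^{p'}w(Q)$) along the tower, and the bound $\La w\Ra\ci{Q}\La\sigma\Ra\ci{Q}\leq C$ valid uniformly over \emph{all} cubes (this is just two-weight $A_2$, which Reguera--Thiele do have) already forces $\La w\Ra\ci{Q}^{p}\sigma(Q)\leq C^p |Q|/\La\sigma\Ra\ci{Q}^{p-1}$, so that the real task is only to show $\sum_{Q\in\cS}|Q|/\La\sigma\Ra\ci{Q}^{p-1}\lesssim_\eta \Vert f\Vert^p$ on the support of the relevant $f$ — and that, in turn, is a sparse-Carleson statement depending only on $\eta$ through the Carleson constant, which is exactly Proposition \ref{p: entropy-sparse-square} type bookkeeping applied with $\Phi(t)=t$ on the (bounded) restriction of the weight.
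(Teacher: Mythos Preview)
There is a genuine gap, and it lies precisely where you flag Step~3 as the obstacle, though the difficulty is sharper than you anticipate. The Reguera--Thiele construction does not produce a single pair $(w,\sigma)$ for which the Hilbert transform is unbounded; it produces, for each large integer parameter $k$, a pair $(w_k,\sigma_k)$ supported in $[0,1)$ for which $\Vert H w_k\Vert_{L^p(\sigma_k)}\gtrsim k\,\Vert 1\Vert_{L^p(w_k)}$. For any \emph{fixed} $k$ the Hilbert transform is bounded (the weights are compactly supported and locally bounded), so the unboundedness is not ``inherited directly''; it requires the gliding-hump step of translating the $(w_k,\sigma_k)$ to disjoint intervals and summing in $k$. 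More seriously, when one computes the sparse testing quantity $\sum_{I\in\cS,\,I\subseteq L}\langle w_k\rangle_I^{\,p}\langle\sigma_k\rangle_I\,|I|$ for a general (not triadic) martingale $\e$-sparse family, the bound one obtains is not uniform in $k$: one gets $\lesssim_p k\, w_k(L)/(1-\e)$, with the factor $k$ arising from intervals that straddle the edges of the triadic structure and are thus trapped between scales $|I(J)|$ and $3^k|I(J)|$, contributing a chain of length $\sim k/(1-\e)$ (cf.\ the paper's Lemma~\ref{partial_testing_w}). Your geometric-series heuristic works cleanly only for \emph{triadic} sparse families; for general ones the factor $k$ is unavoidable.

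The missing idea is therefore a nontrivial rescaling together with the direct-sum step: one replaces $w_k$ by $\wt w_k:=k^{-r}w_k$ for a carefully chosen $r\in\bigl(\max(1,1/(p-1)),\,p'\bigr)$. The lower bound $r>\max(1,1/(p-1))$ ensures $k^{-rp}\cdot k$ and $k^{-r}\cdot k$ both decay, so the rescaled testing constants become uniform in $k$; the upper bound $r<p'$ ensures the Hilbert-transform lower bound survives as $\Vert H\wt w_k\Vert_{L^p(\sigma_k)}\gtrsim k^{1-r/p'}\Vert 1\Vert_{L^p(\wt w_k)}$ with positive exponent. Only after this rescaling does the gliding-hump sum $\wt w:=\sum_k \wt w_k(\cdot-9^k)$, $\sigma:=\sum_k\sigma_k(\cdot-9^k)$ produce a pair satisfying both halves of the proposition. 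Your outline of passing from testing conditions to full sparse bounds via a Carleson-embedding/Culiuc-type argument, and of reducing general sparse families to martingale ones, is correct in spirit and matches the paper's route; but without the $k$-dependent rescaling the testing constants blow up and the argument collapses.
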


We prove Proposition \ref{p: main} by providing an explicit counterexample, using the construction due to M. C. Reguera and C. Thiele \cite{reguera-thiele}, itself a simplified version of the construction due to Reguera \cite{reguera}. The original construction due to Reguera \cite{reguera} was used in \cite{reguera} to disprove the so-called weak-type Muckenhoupt--Wheeden conjecture for the martingale transforms. In \cite{reguera-thiele}, Reguera and Thiele used their construction to disprove the so-called weak-type Muckenhoupt--Wheeden conjecture for the Hilbert transform, while in \cite{reguera-scurry} Reguera and J. Scurry used the construction from \cite{reguera-thiele} to disprove the so-called strong-type Muckenhoupt--Wheeden conjecture for the Hilbert transform. The latter conjecture concerned joint two-weight estimates between the Hardy--Littlewood maximal function and singular integrals, see Subsection \ref{s: previous work}.

\subsubsection{Investigating separated bump conditions} Although the example we provide does disprove Conjecture \ref{conj: conjecture 1}, it fails in an essential way to disprove the separated bump conjecture (Conjecture \ref{conj: separated bump}) itself, for $p=2$.

\begin{prop}\label{p: submain}
One can find weights $w,\sigma$ on $\R$ satisfying Proposition \ref{p: main} for $p=2$, such that for all Young functions $\Phi$ with $\int_{c}^{\infty}\frac{1}{\Phi(t)}dt<\infty$ for some $c>0$,  there holds
\begin{equation*}
\sup_{I}\Vert w\Vert\ci{L^{\Phi}(I)}\La\sigma\Ra\ci{I}=\infty,
\end{equation*}
where supremum is taken over all intervals $I$ in $\R$.
\end{prop}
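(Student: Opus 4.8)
The plan is to reduce the claim, which quantifies over all admissible Young functions $\Phi$ simultaneously, to the single assertion
\begin{equation*}
\sup_{I}\Vert w\Vert\ci{L^{\Phi_0}(I)}\La\sigma\Ra\ci{I}=\infty,\qquad\text{where }\ \Phi_0(t):=t\log(e+t),
\end{equation*}
for the weights $w,\sigma$ produced in the proof of Proposition~\ref{p: main} with $p=2$, and then to verify this directly from the structure of those weights. The reduction is exactly the domination of $L\log L$ bumps by Orlicz bumps of Treil--Volberg \cite{entropy}: for any Young function $\Phi$ with $\int_{c}^{\infty}\frac{1}{\Phi(t)}dt<\infty$ for some $c>0$ one has $\Vert w\Vert\ci{L^{\Phi_0}(I)}\leq C(\Phi)\Vert w\Vert\ci{L^{\Phi}(I)}$ for every interval $I$ and every weight $w$, so finiteness of $\sup_{I}\Vert w\Vert\ci{L^{\Phi}(I)}\La\sigma\Ra\ci{I}$ would force finiteness of $\sup_{I}\Vert w\Vert\ci{L^{\Phi_0}(I)}\La\sigma\Ra\ci{I}$, and passing to contrapositives is what we want. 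The mechanism behind this domination is the elementary fact that the integrability hypothesis forces $\Phi(t)\geq c(\Phi)\,t\log(e+t)$ for all $t\geq1$: otherwise there would be $t_k\to\infty$ with $\Phi(t_k)\leq t_k\log t_k/k$, and since $t\mapsto\Phi(t)/t$ is nondecreasing this gives $\Phi(t)\leq t\log t_k/k$ for $0\leq t\leq t_k$, whence $\int_{c}^{t_k}\frac{1}{\Phi(t)}dt\geq\frac{k}{\log t_k}\log\frac{t_k}{c}\to\infty$, a contradiction; the comparison of Luxemburg norms over the probability space $(I,dx/|I|)$ then depends only on this behaviour at infinity.

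To establish $\sup_{I}\Vert w\Vert\ci{L^{\Phi_0}(I)}\La\sigma\Ra\ci{I}=\infty$ I would return to the Reguera--Thiele construction underlying Proposition~\ref{p: main}. Rescaling the Reguera--Thiele weights and superposing the resulting copies over a sequence of scales produces, besides the uniform sparse square function bounds and the unboundedness of $H(\fdot w)$, a sequence of intervals $I_n$ together with subsets $E_n\subseteq I_n$ of relative measure $\delta_n:=|E_n|/|I_n|\to0$ on which $w$ is thinly spread with high peaks; quantitatively, $w\geq c\,\delta_n^{-1}\La w\Ra\ci{I_n}1\ci{E_n}$ on $I_n$ for a fixed $c>0$. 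Computing the Luxemburg norm over $(I_n,dx/|I_n|)$ of a scalar multiple of $1\ci{E_n}$ (using $\Phi_0^{-1}(u)\sim u/\log(e+u)$) yields
\begin{equation*}
\Vert w\Vert\ci{L^{\Phi_0}(I_n)}\geq\Vert c\,\delta_n^{-1}\La w\Ra\ci{I_n}1\ci{E_n}\Vert\ci{L^{\Phi_0}(I_n)}\gtrsim\La w\Ra\ci{I_n}\log(e+\delta_n^{-1}),
\end{equation*}
so $\Vert w\Vert\ci{L^{\Phi_0}(I_n)}\La\sigma\Ra\ci{I_n}\gtrsim\La w\Ra\ci{I_n}\La\sigma\Ra\ci{I_n}\log(e+\delta_n^{-1})$. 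Since in the construction $\La w\Ra\ci{I_n}$ and $\La\sigma\Ra\ci{I_n}$ remain comparable to a fixed constant on the $I_n$ --- the same feature that keeps the two-weight $A_2$ constant of the pair bounded --- the right-hand side tends to $\infty$, which completes the argument.

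The main obstacle is this last step: it requires reading off from the explicit construction set up in the proof of Proposition~\ref{p: main} both the right family $\{I_n\}$ and the two quantitative facts $w\geq c\,\delta_n^{-1}\La w\Ra\ci{I_n}1\ci{E_n}$ with $\delta_n\to0$, and $\La w\Ra\ci{I_n}\La\sigma\Ra\ci{I_n}\gtrsim1$ (in fact only $\La w\Ra\ci{I_n}\La\sigma\Ra\ci{I_n}\log(e+\delta_n^{-1})\to\infty$ is needed). Conceptually, the asymmetry of the statement --- only the bump on $w$ is shown to fail --- reflects that in the construction $w$ is the ``concentrated'' weight while $\sigma$ is essentially flat, so that $\Vert\sigma\Vert\ci{L^{\Phi_0}(I)}\approx\La\sigma\Ra\ci{I}$ and the companion quantity $\sup_{I}\La w\Ra\ci{I}\Vert\sigma\Vert\ci{L^{\Phi_0}(I)}$ merely recovers the (finite) $A_2$-type bound. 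The remaining ingredients --- the reduction to $\Phi_0$ via \cite{entropy} and the Orlicz-norm estimate for a multiple of an indicator --- are routine.
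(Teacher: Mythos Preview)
Your reduction to the $L\log L$ bump is correct and coincides with the paper's first step. The gap is in the quantitative lower bound you propose for $\Vert w\Vert\ci{L^{\Phi_0}(I_n)}$.

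The claim ``$w\geq c\,\delta_n^{-1}\La w\Ra\ci{I_n}1\ci{E_n}$ for a fixed $c>0$'' is not satisfiable for the Reguera--Thiele weights. On any interval $I$ lying in the $k$-th block, the decreasing rearrangement of $w_k$ over $(I,dx/|I|)$ behaves like $w_k^{\ast}(\delta)\sim 3^{-k}\delta^{-1}\La w_k\Ra\ci{I}$ for the relevant range of $\delta$: the weight takes values $\sim\big(\tfrac{3^k}{3^{k-1}+1}\big)^{l}\La w_k\Ra\ci{I}$ on level sets of relative measure $\sim 3^{-k-l}$, so the product (value)$\times$(relative measure) is always $\sim 3^{-k}$, never $\sim 1$. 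Hence the best constant in your indicator condition is $c\sim 3^{-k}$, and the single--indicator bound yields only
\begin{equation*}
\Vert w_k\Vert\ci{L^{\Phi_0}(I_n)}\gtrsim 3^{-k}\,\La w_k\Ra\ci{I_n}\log(e+\delta_n^{-1}),
\end{equation*}
which, even on the ``good'' intervals where $\La w_k\Ra\ci{I_n}\La\sigma_k\Ra\ci{I_n}\sim1$, gives after the rescaling $\wt w_k=k^{-r}w_k$ only $k^{-r}3^{-k}\log(e+\delta_n^{-1})$. Pushing $\delta_n$ small enough to beat $k^r3^k$ forces you into levels $l\gg 3^{k}$, where the factor $\big(\tfrac{3^{k-1}}{3^{k-1}+1}\big)^{l}$ destroys the value estimate. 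A single indicator therefore cannot produce divergence.

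What the paper actually does is compute the full Lorentz $\Lambda_{\phi_0}$ norm over an interval $K\in\mathbf{K}$ by \emph{summing} the contributions of all levels:
\begin{equation*}
\Vert w_k\Vert^{\ast}\ci{K}\sim\La w_k\Ra\ci{K}\sum_{l\geq0}\Big(\tfrac{3^k}{3^{k-1}+1}\Big)^{l}\phi_0(3^{-k-l})\sim\La w_k\Ra\ci{K}\cdot 3^{-k}\sum_{l\geq0}\Big(\tfrac{3^{k-1}}{3^{k-1}+1}\Big)^{l}(k+l)\sim 3^{k}\La w_k\Ra\ci{K},
\end{equation*}
using $\sum_l l x^l\sim(1-x)^{-2}$ with $1-x\sim 3^{-k}$. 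It is precisely this accumulation across $\sim 3^{k}$ comparable levels, each individually contributing only $\sim3^{-k}$, that produces the factor $3^{k}$; no single level suffices. The paper then transplants this to the non-triadic interval $R_k:=I(J)\cup K'$ (with $K'\in\mathrm{ch}\ci{\mathbf K}(K)$ adjacent to $I(J)$), on which $\La w_k\Ra\ci{R_k}\La\sigma_k\Ra\ci{R_k}\sim1$, obtaining $\Vert\wt w_k\Vert^{\ast}\ci{R_k}\La\sigma_k\Ra\ci{R_k}\gtrsim k^{-r}3^{k}\to\infty$.

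A secondary inaccuracy: for the \emph{rescaled} weight $\wt w$ of Proposition~\ref{p: main} one has $\La\wt w\Ra\ci{I}\La\sigma\Ra\ci{I}\lesssim k^{-r}$ on the $k$-th block, not ``comparable to a fixed constant''; and $\sigma$ is not ``essentially flat'' (it is $w^{-1}$ on the support and has its own multi-level structure). These points are not fatal by themselves, but they reinforce that the heuristic picture you are using does not match the actual weights.
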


In particular, two-weight bounds for sparse square functions of the type appearing in Conjecture \ref{conj: conjecture 1} do not imply both separated Orlicz bump conditions on the involved weights for $p=2$, and for Young functions satisfying the integrability condition of Proposition \ref{p: submain}. In order to prove Proposition \ref{p: submain}, we rely on the domination of $L\log L$ bumps by Orlicz bumps (for Young functions satisfying the integrability condition of Proposition \ref{p: submain}) observed by Treil and Volberg in \cite{entropy}.

It is a curious fact that the above issue disappears if one restricts attention to \emph{triadic} intervals.

\begin{prop}\label{p: submain imp}
One can find weights $w,\sigma$ on $\R$ satisfying Proposition \ref{p: main} for $p=2$ as well as Proposition \ref{p: submain}, such that for some $\delta\in(0,1)$, the Young function $\Phi$ given by
\begin{equation*}
\Phi(t):=t\log(e+t)(\log(\log(e^e+t)))^{1+\delta},\qquad0\leq t<\infty,
\end{equation*}
satisfies
\begin{equation*}
\sup_{I\in\mathcal{T}}\Vert w\Vert\ci{L^{\Phi}(I)}\La\sigma\Ra\ci{I}<\infty,\qquad \sup_{I\in\mathcal{T}}\Vert\sigma\Vert\ci{L^{\Phi}(I)}\La w\Ra\ci{I}<\infty,
\end{equation*}
where $\mathcal{T}$ is the family of all triadic intervals in $\R$.
\end{prop}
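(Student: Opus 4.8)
The plan is to revisit the Reguera--Thiele construction used for Propositions \ref{p: main} and \ref{p: submain} and observe that it is, in fact, adapted to a triadic grid. In that construction the weight $w$ is (a normalized multiple of) a sum of indicator functions of a sparse family of triadic intervals --- roughly, $w$ is comparable to a constant on each member of a nested triadic tree and the masses decay geometrically across generations --- while $\sigma$ is essentially a point mass phenomenon concentrated near the left endpoints of those intervals, realized as an $L^1$ function supported on a thin set. The reason $\sup_{I}\Vert w\Vert\ci{L^{\Phi}(I)}\La\sigma\Ra\ci{I}=\infty$ over \emph{all} intervals $I$ is that one can choose a tiny interval $I$ straddling a concentration point of $\sigma$ and simultaneously capturing a large chunk of $w$ with an off-center location, exploiting the freedom of arbitrary intervals; on triadic $I$, the rigidity of the grid forces any interval that sees the concentration point to either be one of the tree intervals itself (on which $\La\sigma\Ra\ci{I}$ is controlled by the construction's stopping-time/Carleson bound) or to be so small that it contains essentially none of the $w$-mass from coarser generations.

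First I would set up notation: write $\cS_0$ for the sparse triadic tree from the construction, fix a triadic interval $I$, and split the estimate of $\Vert w\Vert\ci{L^{\Phi}(I)}\La\sigma\Ra\ci{I}$ according to whether $I\in\cS_0$, $I$ strictly contains some member of $\cS_0$, or $I$ is strictly contained in some member of $\cS_0$. Second, for the ``$I$ contains a generation-$k$ interval'' case, I would use that on $I$ the weight $w$ is a sum of at most boundedly many distinct constant values (one per generation meeting $I$) with geometrically decaying level sets, so the $L\Phi$ average $\Vert w\Vert\ci{L^{\Phi}(I)}$ is comparable to $\La w\Ra\ci{I}\cdot(\text{at most a }\log\log\text{-type factor})$; this is exactly the regime where the triple-logarithmic bump $\Phi(t)=t\log(e+t)(\log\log(e^e+t))^{1+\delta}$ costs only a bounded overhead relative to the plain average, because the number of distinct dyadic/triadic scales present inside $I$ that carry non-negligible mass is itself only logarithmic in the reciprocal of the smallest relevant density. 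Third, combining this with the standard fact that $\sup_{I}\La w\Ra\ci{I}\La\sigma\Ra\ci{I}<\infty$ (the two-weight $A_2$ bound, which the Reguera--Thiele weights satisfy --- indeed that is what makes them a genuine counterexample) gives $\sup_{I\in\mathcal T}\Vert w\Vert\ci{L^{\Phi}(I)}\La\sigma\Ra\ci{I}<\infty$, and the symmetric estimate for $\sup_{I\in\mathcal T}\Vert\sigma\Vert\ci{L^{\Phi}(I)}\La w\Ra\ci{I}$ follows from the same analysis with the roles of $w$ and $\sigma$ interchanged, using that $\sigma$ likewise has a triadic layered structure.

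The main obstacle I anticipate is the quantitative control of $\Vert w\Vert\ci{L^{\Phi}(I)}$ for triadic $I$ that sit ``between'' generations of the tree, i.e. the precise bookkeeping that the number of distinct nonzero levels of $w$ inside such an $I$ is at most $O(\log\log(1/\varepsilon))$ where $\varepsilon$ is the sparseness/mass parameter of the construction, so that plugging into the Luxemburg norm of the triple-log Young function $\Phi$ yields a bound of the form $C_\delta\La w\Ra\ci{I}$. This requires opening up the explicit definition of $w$ in the Reguera--Thiele example --- the number of generations and the exact geometric ratio of the masses --- and matching it against the growth of $\Phi$; once that matching is done, the Orlicz computation is the elementary estimate for $\Vert\sum_j a_j 1_{E_j}\Vert\ci{L^\Phi}$ with nested $E_j$ and geometric $a_j$. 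A secondary point to be careful about is the behavior near the left endpoints where $\sigma$ concentrates: one must check that no \emph{triadic} interval can isolate a concentration point of $\sigma$ while still collecting coarse-scale $w$-mass, which is where the obstruction from Proposition \ref{p: submain} genuinely dissolves --- this is the ``curious fact'' and it is ultimately a statement about the incompatibility of the placement of $\sigma$'s mass (at triadic endpoints) with triadic intervals of intermediate size.
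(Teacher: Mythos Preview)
Your proposal has a genuine gap: the central quantitative claim is false. You assert that for triadic $I$, the Orlicz norm $\Vert w\Vert\ci{L^{\Phi}(I)}$ is comparable to $\La w\Ra\ci{I}$ times ``at most a $\log\log$-type factor,'' because ``the number of distinct nonzero levels of $w$ inside such an $I$ is at most $O(\log\log(1/\varepsilon))$.'' Neither statement holds for the Reguera--Thiele weight. On an interval $K\in\mathbf{K}$ the weight $w_k$ takes \emph{infinitely} many distinct values (one per generation $m\geq i+1$), with level sets of geometrically decreasing measure; the paper computes explicitly (via the Lorentz norm $\Lambda_{\psi}$ that dominates $L^{\Phi}$) that
\[
\Vert w_k\Vert\ci{L^{\Phi}(K)}\sim 3^{k}k^{r}\La w_k\Ra\ci{K},
\]
so the overhead over the plain average is enormous, not $O(1)$. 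Consequently your strategy ``Orlicz $\lesssim$ average, then use the two-weight $A_2$ bound'' cannot close.

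What actually happens is a cancellation you never invoke: on $K\in\mathbf{K}$ one has $\La w_k\Ra\ci{K}\La\sigma_k\Ra\ci{K}\sim 3^{-k}$ (much smaller than the uniform $A_2$ bound), and this small factor absorbs the $3^{k}$ from the Orlicz norm, leaving $\Vert w_k\Vert\ci{L^{\Phi}(K)}\La\sigma_k\Ra\ci{K}\lesssim k^{r}$. The residual $k^{r}$ is then killed by the rescaling $\wt{w}_k:=k^{-r}w_k$ already built into the weights of Propositions \ref{p: main} and \ref{p: submain}; this is precisely why the exponent in $\Phi$ is $1+\delta=r$, a connection your proposal does not make. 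The paper carries this out by passing to a Lorentz space $\Lambda_{\psi}$ with $\psi(s)=s(12-\log s)(\log(12-\log s))^{r}$ (so that the computation reduces to the distribution function of $w_k$ on $K$, which is completely explicit), summing the resulting series, and reducing arbitrary triadic $I$ to the cases $K\in\mathbf{K}$ and $I\subseteq I(J)$. Your description of $\sigma$ as ``point mass phenomenon concentrated near the left endpoints'' is also off: $\sigma_k=w_k^{-1}\mathbf{1}_{\{w_k>0\}}$ is supported on exactly the same set as $w_k$, and the bad non-triadic interval in Proposition \ref{p: submain} is $R=I(J)\cup K'$, where both $\La w\Ra\ci{R}\La\sigma\Ra\ci{R}\sim 1$ \emph{and} the $L\log L$ norm is large --- it is this simultaneity that triadic intervals cannot reproduce, not an isolation-of-$\sigma$ effect.
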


We emphasize that the exponent $1+\delta$ in Proposition \ref{p: submain imp} satisfies $1+\delta<2$; compare with Theorem \ref{thm: partial separated bump}.

\textbf{Acknowledgements.} I am grateful to Professor Sergei Treil for suggesting the problem of this paper to me, and for several very helpful discussions on various aspects of this paper.

\section{Overview of the paper}

Here we give an outline of the proofs of the main results. In the sequel, the notation $A\lesssim_{c_1,c_2,\ldots}B$, respectively $A\gtrsim_{c_1,c_2,\ldots}B$, will mean that $A\leq CB$, respectively $A\geq CB$, for some positive constant $C$ depending only on the quantities $c_1,c_2,\ldots$. The notation $A\sim_{c_1,c_2,\ldots} B$ will mean that $A\lesssim_{c_1,c_2,\ldots} B$ and $B\lesssim_{c_1,c_2,\ldots} A$.

\subsection{Recalling the estimates from \cite{reguera-scurry} and \cite{reguera-thiele}}\label{s: previous work} In this paper, we make use of a construction of a particular weight on $[0,1)$ due to Reguera--Thiele \cite{reguera-thiele}, which was also used by Reguera--Scurry \cite{reguera-scurry}. For every positive integer parameter $k>3000$, Reguera--Thiele \cite{reguera-thiele} construct a weight $w_k$ on $[0,1)$ for which there exists a ``large'' measurable subset $E_k$ of $\lbrace w_k>0\rbrace$ such that
\begin{equation}
\label{pointwise Hilbert transform estimate}
|Hw_k(x)|\gtrsim kw_{k}(x),\qquad\text{ for all }x\in E_k.
\end{equation}
The lattice of triadic subintervals of $[0,1)$ plays a fundamental role in their construction. For the reader's convenience, we recall the construction of the weight $w_k$ due to Reguera--Thiele \cite{reguera-thiele} in Section \ref{s: reguera-thiele}. Fix $1<p<\infty$. Reguera--Thiele \cite{reguera-thiele} for $p=2$ and Reguera--Scurry \cite{reguera-scurry} for any $1<p<\infty$ define the weight $\sigma_k:=\frac{w_k}{(Mw_k)^{p}}$ on $[0,1)$, where $M$ denotes the Hardy--Littlewood maximal function. Reguera--Scurry \cite{reguera-scurry} prove the estimate
\begin{equation}
\label{pointwise maximal function estimate}
Mw_k(x)\lesssim w_k(x),\qquad\text{ for all }x\in\lbrace w_k>0\rbrace,
\end{equation}
a restricted version of which is also established by Reguera--Thiele \cite{reguera-thiele}. Combining \eqref{pointwise Hilbert transform estimate} and \eqref{pointwise maximal function estimate}, Reguera--Scurry \cite{reguera-scurry} immediately deduce that
\begin{equation}
\label{norm Hilbert transform estimate}
\Vert Hw_k\Vert\ci{L^{p}(\sigma_k)}\gtrsim k\Vert1\ci{[0,1)}\Vert\ci{L^{p}(w_k)}.
\end{equation}
Using \eqref{pointwise maximal function estimate}, Reguera--Scurry \cite{reguera-scurry} establish the two-weights bounds
\begin{equation}
\label{norm maximal function estimate} 
\Vert M(\fdot w_k)\Vert\ci{L^{p}(w_k)\rightarrow L^{p}(\sigma_k)},~\Vert M(\fdot \sigma_k)\Vert\ci{L^{p'}(\sigma_k)\rightarrow L^{p'}(w_k)}\lesssim_{p} 1.
\end{equation}
We emphasize that estimate \eqref{norm maximal function estimate} is uniform with respect to $k$. Using \eqref{norm Hilbert transform estimate} and \eqref{norm maximal function estimate}  and applying a standard ``direct sum of singularities'' (also known as ``gliding hump'') type argument, Reguera--Scurry \cite{reguera-scurry} produce weights $w,\sigma$ on $\R$ such that
\begin{equation*}
\Vert M(\fdot w)\Vert\ci{L^{p}(w)\rightarrow L^{p}(\sigma)},~\Vert M(\fdot \sigma)\Vert\ci{L^{p'}(\sigma)\rightarrow L^{p'}(w)}<\infty,
\end{equation*}
but there exists $f\in L^{p}(w)$ with $H(fw)\notin L^{p}(\sigma)$, thus disproving the strong-type Muckenhoupt--Wheeden conjecture.

\subsection{Two-weight estimates for generalized sparse operators} In order to prove Proposition \ref{p: main}, we begin by establishing two-weight estimates for sparse $p$-functions and sparse $p'$-functions, with respect to the weights constructed by Reguera--Thiele \cite{reguera-thiele} for $p=2$ and Reguera--Scurry \cite{reguera-scurry} for any $1<p<\infty$. Denote $u(Q):=\int_{Q}u(x)dx$.

\begin{prop}\label{p: main estimate}
Let $0<\e<1$, and let $\cS$ be a martingale $\e$-sparse family of subintervals of $[0,1)$. Then, there holds
\begin{equation}
\label{norm sparse square function estimates}
\sum_{\substack{I\in\mathcal{S}\\I\subseteq L}}(\La w_k\Ra\ci{I})^{p}\La\sigma_k\Ra\ci{I}|I|\lesssim_{p} k\frac{w_k(L)}{1-\e},\qquad \sum_{\substack{I\in\mathcal{S}\\I\subseteq L}}(\La \sigma_k\Ra\ci{I})^{p'}\La w_k\Ra\ci{I}|I|\lesssim_{p} k\frac{\sigma_k(L)}{1-\e},
\end{equation}
for any subinterval $L$ of $[0,1)$.
\end{prop}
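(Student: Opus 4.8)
The key input is the pointwise maximal function estimate \eqref{pointwise maximal function estimate}, namely $Mw_k\lesssim w_k$ on $\{w_k>0\}$, together with the definition $\sigma_k=w_k/(Mw_k)^p$. I would first observe that for any interval $I$ meeting $\{w_k>0\}$, the average $\La w_k\Ra\ci{I}$ is controlled by $\inf_I Mw_k$ (this is just the definition of the Hardy--Littlewood maximal function), hence by \eqref{pointwise maximal function estimate} also $\La w_k\Ra\ci{I}\lesssim w_k(x)$ for a.e.\ $x\in I\cap\{w_k>0\}$ — wait, more precisely $\La w_k\Ra\ci I\le \inf_{x\in I}Mw_k(x)$, and on the set $E$ where we want to integrate we have $Mw_k(x)\lesssim w_k(x)$. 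The real point is that $\sigma_k=w_k(Mw_k)^{-p}$, so $\La w_k\Ra\ci I^{p}\,\sigma_k(x)=\La w_k\Ra\ci I^p w_k(x)(Mw_k(x))^{-p}\le w_k(x)$ pointwise for $x\in I$, because $\La w_k\Ra\ci I\le Mw_k(x)$ there. This turns each summand's "$\sigma_k$-mass" into "$w_k$-mass" up to the factor $(\La w_k\Ra\ci I/\inf_I Mw_k)^p\le 1$.

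\textbf{Carrying it out.} Fix a subinterval $L\subseteq[0,1)$. For each $I\in\cS$ with $I\subseteq L$, write $\La\sigma_k\Ra\ci I|I|=\sigma_k(I)=\int_I w_k(x)(Mw_k(x))^{-p}dx$. Since $Mw_k(x)\ge\La w_k\Ra\ci I$ for every $x\in I$, we get
\[
(\La w_k\Ra\ci I)^{p}\,\sigma_k(I)=\int_I (\La w_k\Ra\ci I)^p (Mw_k(x))^{-p}w_k(x)\,dx\le \int_I w_k(x)\,dx = w_k(I).
\]
Hmm, but that loses a factor of $k$ compared to the claimed bound, which has $k\,w_k(L)/(1-\e)$ on the right — so in fact I should \emph{not} be this lossy. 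Let me instead use the sparseness: the martingale $\e$-sparse structure gives disjoint major subsets $E_I\subseteq I$ with $|E_I|\ge(1-\e)|I|$, or (equivalently, up to constants) the Carleson-type packing $\sum_{I\in\cS,\,I\subseteq L}|I|\lesssim (1-\e)^{-1}|L|$... no — the correct consequence is the Carleson embedding / "$\sum\La u\Ra\ci I|I|\lesssim(1-\e)^{-1}u(L)$" type bound. So the right strategy is: bound $\La\sigma_k\Ra\ci I(\La w_k\Ra\ci I)^{p}|I|\le (\text{something involving }w_k(x)\text{ pointwise on }E_I)$ and then sum the disjoint pieces $E_I$, which land in $L$ and give total mass $\le w_k(L)$; the extra factor $k$ must come from somewhere in the Reguera--Thiele structure, perhaps from the ratio $\sup_I Mw_k/\inf_I Mw_k$ on sparse intervals, or from the number of "relevant scales". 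The first display above already yields $\sum_{I\subseteq L}(\La w_k\Ra\ci I)^p\sigma_k(I)\le\sum_{I\subseteq L}w_k(I)$, and \emph{that} sum is $\lesssim(1-\e)^{-1}w_k(L)$ by the sparse Carleson bound — no factor of $k$ needed at all, so the stated $k$ is merely a (non-optimal) upper bound and is harmless. So: (i) pointwise bound each summand by $w_k(I)$; (ii) apply sparseness to sum $\sum_{I\subseteq L}w_k(I)\lesssim(1-\e)^{-1}w_k(L)$. The second estimate in \eqref{norm sparse square function estimates} is symmetric once one notes that $w_k=\sigma_k(M w_k)^{p}$ and $p'$ plays the role dual to $p$; one needs the analogue $\La\sigma_k\Ra\ci I\le$ (pointwise control) — here one uses that $\sigma_k\le w_k/(\La w_k\Ra\ci I)^p$ on $I$ roughly, feeding into $(\La\sigma_k\Ra\ci I)^{p'}\La w_k\Ra\ci I|I|$, and the same Carleson packing finishes it. I would double-check the exponent bookkeeping $p\leftrightarrow p'$ carefully, since that is where an off-by-one is easiest to make.

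\textbf{Main obstacle.} The only genuinely delicate point is the second inequality: there the natural pointwise bound runs the "wrong way" ($\sigma_k$ is small where $Mw_k$ is large, so one cannot simply replace $\La\sigma_k\Ra\ci I$ by a pointwise value of $\sigma_k$). I expect one must instead use $\La\sigma_k\Ra\ci I\lesssim \La w_k\Ra\ci I/(\La w_k\Ra\ci I)^p=(\La w_k\Ra\ci I)^{1-p}$ — using \eqref{pointwise maximal function estimate} to see $Mw_k\sim w_k$ on the support, hence $\La\sigma_k\Ra\ci I=\int_I w_k(Mw_k)^{-p}\sim\int_I (w_k)^{1-p}$ — and then the combination $(\La\sigma_k\Ra\ci I)^{p'}\La w_k\Ra\ci I$ must be re-expressed in a summable form, absorbing the loss into the factor $k$, which this time may really be needed. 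Concretely I anticipate showing $(\La\sigma_k\Ra\ci I)^{p'}\La w_k\Ra\ci I|I|\lesssim_p k\,\sigma_k(I)$ pointwise-in-$I$ using the structure of $w_k$ on triadic intervals (where $w_k$ is roughly constant up to the factor $k$ across the relevant scales), and then $\sum_{I\subseteq L}\sigma_k(I)\lesssim(1-\e)^{-1}\sigma_k(L)$ by sparse Carleson. Verifying that per-interval estimate from the explicit Reguera--Thiele construction, recalled in Section \ref{s: reguera-thiele}, is the part that will require actual work.
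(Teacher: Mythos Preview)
Your argument for the first estimate breaks at step (ii). The inequality
\[
\sum_{\substack{I\in\cS\\ I\subseteq L}} w_k(I)\ \lesssim\ \frac{w_k(L)}{1-\e}
\]
is \emph{not} a consequence of martingale $\e$-sparseness. Sparseness with respect to Lebesgue measure yields the packing $\sum_{I\subseteq L}|I|\le(1-\e)^{-1}|L|$; it does \emph{not} yield $\sum_{I\subseteq L}u(I)\le C(1-\e)^{-1}u(L)$ for an arbitrary weight $u$ unless $u$ satisfies an $A_\infty$-type condition, and $w_k$ is very far from $A_\infty$ (its support has Lebesgue measure $\sim 3^{-k}$). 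Concretely, take $\cS=\mathbf{K}$: for each $K'\in\mathbf{K}$ the $\cS$-children are the $3^{k-1}$ intervals of $\mathrm{ch}_{\mathbf{K}}(K')$, of total length $|K'|/3$, so $\cS$ is martingale $\tfrac13$-sparse; yet by Lemma~\ref{packing_K},
\[
\sum_{\substack{K'\in\mathbf{K}\\ K'\subseteq K}} w_k(K')\ \sim\ 3^{k}\, w_k(K),
\]
so your step (ii) fails by a factor $3^k$, not merely $k$. Your pointwise bound $(\La w_k\Ra\ci I)^p\sigma_k(I)\le w_k(I)$ is correct, but it throws away the crucial gain $(\La w_k\Ra\ci{K'})^{p}\La\sigma_k\Ra\ci{K'}\sim 3^{-k}\La w_k\Ra\ci{K'}$ (Lemma~\ref{average_estimates_triadic}) that the paper exploits; once that gain is discarded, the remaining sum is simply too large.

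The paper's proof is structurally different. It does not attempt a single pointwise domination followed by a packing; instead it classifies the sparse intervals $I\subseteq K$ according to their position relative to $J=K^m$ and $I(J)$ (the families $\cS^1_K,\ldots,\cS^4_K$ of Proposition~\ref{testing_condition_K_w} and the ``good/bad'' dichotomy of Lemma~\ref{testing_contains_endpoint_K_w}), uses the precise average estimates of Lemmas~\ref{average_estimates_triadic}--\ref{average_estimates} for each class, and only then sums over the $\mathbf{K}$-tree via Lemma~\ref{packing_K}. The factor $k$ arises genuinely, from counting sparse generations between the scales $|I(J)|$ and $|K|=3^{k}|I(J)|$ (see Lemma~\ref{partial_testing_w}\,(a) and the bound on $\#\cS^3_K$). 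Your remark that ``no factor of $k$ needed at all'' is therefore based on a false premise. The second estimate has the same obstruction; your proposed per-interval bound $(\La\sigma_k\Ra\ci I)^{p'}\La w_k\Ra\ci I|I|\lesssim_p k\,\sigma_k(I)$ would again need to be summed via an $A_\infty$-type packing for $\sigma_k$ that is not available.
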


We refer to Subsection \ref{s: sparse definitions} for the definition of martingale sparse families, and to Subsection \ref{s: proof of local testing} for the proof of Proposition \ref{p: main estimate}. Notice that in contrast to the estimates \eqref{norm maximal function estimate} obtained by Reguera--Scurry \cite{reguera-scurry}, estimates \eqref{norm sparse square function estimates} are not uniform with respect to $k$. To rectify this, we will need to rescale one of the two weights. Namely, pick $r\in(\max(1/(p-1),1),p')$, where $p':=p/(p-1)$, and define the weight $\wt{w}_k:=k^{-r}w_{k}$ on $[0,1)$. Then, it is immediate to see that for any martingale $\e$-sparse family $\cS$ of subintervals of $[0,1)$ there holds
\begin{equation}
\label{rescaled norm sparse square function estimates}
\sum_{\substack{I\in\mathcal{S}\\I\subseteq L}}(\La\wt{w}_k\Ra\ci{I})^{p}\La\sigma_k\Ra\ci{I}|I|\lesssim_{p}\frac{\wt{w}_k(L)}{1-\e},\qquad \sum_{\substack{I\in\mathcal{S}\\I\subseteq L}}(\La \sigma_k\Ra\ci{I})^{p'}\La\wt{w}_k\Ra\ci{I}|I|\lesssim_{p} \frac{\sigma_k(L)}{1-\e},
\end{equation}
for any subinterval $L$ of $[0,1)$. Although the estimates in \eqref{rescaled norm sparse square function estimates} are formally only truncated and restricted versions of the two-weight bounds we would like to prove, and concern only martingale sparse families, they imply the desired two-weight bounds in full generality. Indeed, it is a special case of a result due to A. Culiuc \cite{culiuc} that $L^p$ two-weight bounds for martingale $p$-functions are equivalent to Sawyer-type testing conditions like the one appearing in the first estimate in \eqref{rescaled norm sparse square function estimates}. Moreover, as explained in \cite{convex_body}, estimates for sparse operators with respect to general sparse families can be reduced to estimates for sparse operators with respect to martingale sparse families. We refer to Subsection \ref{s: main sparse estimates} for details.

It is important to note that the rescaling we introduced above does not destroy the blow-up of the norm of the Hilbert transform established by Reguera--Scurry \cite{reguera-scurry}. Indeed, estimate \eqref{norm Hilbert transform estimate} is immediately seen to imply the estimate
\begin{equation}
\label{rescaled norm Hilbert transform estimate}
\Vert H\wt{w}_k\Vert\ci{L^{p}(\sigma_k)}\gtrsim k^{1-(r/p')}\Vert1\ci{[0,1)}\Vert\ci{L^{p}(\wt{w}_k)},
\end{equation}
where $1-(r/p')>0$. Applying a ``direct sum of singularities'' type argument following the one used by Reguera--Scurry \cite{reguera-scurry}, we get weights $\wt{w},\sigma$ on $\R$  such that for any $0<\eta<1$ and for any $\eta$-sparse family $\cS$ of intervals in $\R$ there holds
\begin{equation*}
\Vert\mathcal{A}\ci{\cS,p}(\fdot\wt{w})\Vert\ci{L^{p}(\wt{w})\rightarrow L^{p}(\sigma)},~\Vert\mathcal{A}\ci{\cS,p'}(\fdot\sigma)\Vert\ci{L^{p'}(\sigma)\rightarrow L^{p'}(\wt{w})}\lesssim_{p,\eta}1,
\end{equation*}
but there exists $f\in L^{p}(\wt{w})$ such that $H(f\wt{w})\notin L^{p}(\sigma)$. We refer to Subsection \ref{s: main sparse estimates} for details. This concludes the proof of Proposition \ref{p: main}.

\begin{rem}
Using the original construction due to Reguera \cite{reguera} and applying the same techniques and strategy as in Section \ref{s: sparse-square-estimates}, one can prove an analogue of Proposition \ref{p: main} for the martingale transforms in the place of the Hilbert transform. The details are left to the interested reader.
\end{rem}

\subsection{Investigating separated bump conditions} \label{s: overview bump conditions} Although the example introduced in the previous subsection suffices to disprove Conjecture \ref{conj: conjecture 1}, it fails dramatically to disprove the separated bump conjecture (Conjecture \ref{conj: separated bump}) itself, for $p=2$.

We assume throughout this subsection that $p=2$. We prove that for all Young functions $\Phi$ with $\int_{c}^{\infty}\frac{1}{\Phi(t)}dt<\infty$ for some $c>0$, there holds
\begin{equation*}
\sup\ci{I}\Vert\wt{w}\Vert\ci{L^{\Phi}(I)}\La \sigma\Ra\ci{I}=\infty,
\end{equation*}
where supremum is taken over all intervals $I$ in $\R$. It suffices to prove that one can find subintervals $R_k,~k>4000$ of $[0,1)$ such that for all Young functions $\Phi$ with $\int_{c}^{\infty}\frac{1}{\Phi(t)}dt<\infty$ for some $c>0$, there holds
\begin{equation}
\label{Orlicz blow-up}
\lim_{k\rightarrow\infty}\Vert\wt{w}_k\Vert\ci{L^{\Phi}(R_k)}\La \sigma_k\Ra\ci{R_k}=\infty.
\end{equation}
We need to estimate Orlicz bumps for the weight $w_k$ constructed by Reguera--Thiele \cite{reguera-thiele} from below. Instead of doing this directly, we estimate certain Lorentz bumps, resulting in a dractic simplification of the required computations.

More precisely, consider the function $\phi_{0}:[0,1]\rightarrow[0,\infty)$ given by
\begin{equation*}
\phi_0(s):=s(1-\log s),~0<s\leq1,\qquad\phi_0(0):=0.
\end{equation*}
For all intervals $I$ in $\R$, we denote by $\Lambda\ci{\phi_0}(I)$ the Lorentz space  with fundamental function $\phi_0$, with respect to normalized Lebesgue measure on $I$. We refer to Subsection \ref{s: lorentz norms} for the definition of Lorentz spaces, and to Subsection \ref{s: Llog L} for further remarks on the Lorentz space $\Lambda_{\phi_0}$.

It is an observation due to Treil and Volberg \cite{entropy}, that for all Young functions $\Phi$ with $\int_{c}^{\infty}\frac{1}{\Phi(t)}dt<\infty$ for some $c>0$, one has the estimate
\begin{equation}
\label{entropy-Orlicz comp}
\Vert f\Vert\ci{\Lambda\ci{\phi_0}(I)}\lesssim\ci{\Phi}\Vert f\Vert\ci{L^{\Phi}(I)},
\end{equation}
for all measurable functions $f\geq 0$ on $I$ and for all intervals $I$. We refer to Subsection \ref{s: comparison principles} for details.

In light of \eqref{entropy-Orlicz comp}, it suffices to prove the following stronger result: one can find subintervals $R_k,~k>4000$ of $[0,1)$ such that
\begin{equation}
\label{entropy blow-up}
\lim_{k\rightarrow\infty}\Vert\wt{w}_k\Vert\ci{\Lambda\ci{\phi_0}(R_k)}\La\sigma_{k}\Ra\ci{R_k}=\infty.
\end{equation}
After recalling that $\wt{w}_{k}:=k^{-r}w_k$, where $1<r<2$, this follows from the lemma below, proved in Subsection \ref{s: entropy-Orlicz comparison}.

\begin{lm}\label{l: main entropy estimate}
For all $k>4000$, there exists a subinterval $R_k$ of $[0,1)$ such that
\begin{equation*}
\Vert w_k\Vert\ci{\Lambda\ci{\phi_0}(R_k)}\gtrsim 3^{k}\La w_{k}\Ra\ci{R_k},\qquad\La w_k\Ra\ci{R_k}\La\sigma_k\Ra\ci{R_k}\gtrsim 1.
\end{equation*}
\end{lm}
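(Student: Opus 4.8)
The plan is to unwind the Reguera--Thiele construction recalled in Section~\ref{s: reguera-thiele} and locate, at each level $k$, a single triadic interval $R_k$ on which the weight $w_k$ is extremely spiky --- concentrated on an exponentially small subset --- so that its $L\log L$-type Lorentz norm $\Vert w_k\Vert\ci{\Lambda\ci{\phi_0}(R_k)}$ is much larger than its average $\La w_k\Ra\ci{R_k}$, while simultaneously $\sigma_k=w_k/(Mw_k)^p$ is, on the \emph{same} interval, comparable on average to the reciprocal of $\La w_k\Ra\ci{R_k}$. The second inequality $\La w_k\Ra\ci{R_k}\La\sigma_k\Ra\ci{R_k}\gtrsim 1$ is essentially the two-weight $A_2$-type lower bound: since $Mw_k\lesssim w_k$ pointwise on $\{w_k>0\}$ by \eqref{pointwise maximal function estimate}, one has $\sigma_k=w_k/(Mw_k)^2\gtrsim 1/w_k$ there (for $p=2$), and then by Cauchy--Schwarz or Jensen on $R_k$, $\La w_k\Ra\ci{R_k}\La\sigma_k\Ra\ci{R_k}\gtrsim\La w_k\Ra\ci{R_k}\La w_k^{-1}\Ra\ci{R_k}\geq 1$; one only has to be careful about the portion of $R_k$ where $w_k=0$, which is handled by choosing $R_k$ so that $w_k$ charges a definite fraction of it, or by noting $\sigma_k$ is supported where $w_k>0$ and estimating $\La\sigma_k\Ra\ci{R_k}$ via the subinterval actually carrying the mass.

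For the first, main inequality, I would recall the explicit self-similar structure of $w_k$: it is built by distributing mass over a nested sequence of triadic intervals, and at the $k$-th stage of recursion the weight, restricted to an appropriate ancestor interval $R_k$, takes a large value (on the order of $3^{j}$-type heights coming from the $j$ stopping generations) on a subset whose relative measure in $R_k$ is of order $3^{-k}$ (or $c^{-k}$ for the relevant branching constant), and is comparatively small off that subset. The Lorentz norm $\Vert\fdot\Vert\ci{\Lambda\ci{\phi_0}(I)}$ with $\phi_0(s)=s(1-\log s)$ is, up to constants, the $L\log L$ average: $\Vert f\Vert\ci{\Lambda\ci{\phi_0}(I)}\sim\frac{1}{|I|}\int_I |f|\log\!\big(e+\tfrac{|f|}{\La f\Ra\ci I}\big)$, or more simply $\Vert f\Vert\ci{\Lambda\ci{\phi_0}(I)}\gtrsim \La f\Ra\ci I\cdot\log\!\big(e+\Vert f\Vert_{L^\infty(I)}/\La f\Ra\ci I\big)$ is too weak; instead I would use the layer-cake lower bound $\Vert f\Vert\ci{\Lambda\ci{\phi_0}(I)}\gtrsim \frac{1}{|I|}\int_I f\,\log_+\!\frac{f}{\La f\Ra\ci I}\,dx$ and plug in the spike: on the set $G_k\subseteq R_k$ with $|G_k|/|R_k|\sim 3^{-k}$ and $f|_{G_k}\sim h_k$ where $h_k|G_k|/|R_k|\sim\La f\Ra\ci{R_k}$ (the spike carries a constant fraction of the mass), the logarithm $\log(h_k/\La f\Ra\ci{R_k})\sim\log(3^k)\sim k$, giving $\Vert w_k\Vert\ci{\Lambda\ci{\phi_0}(R_k)}\gtrsim k\La w_k\Ra\ci{R_k}$. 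To upgrade the factor $k$ to $3^k$ as claimed in the statement, I would instead take $R_k$ to be a \emph{small} triadic interval deep in the construction, say the common ancestor just above the final concentration cube, so that the ratio $h_k/\La w_k\Ra\ci{R_k}$ is not $3^k$ but doubly exponential --- i.e. choose $R_k$ so that the entire remaining mass of $w_k$ beneath $R_k$ sits on a sub-triadic piece of relative size $3^{-k}$, forcing $\Vert w_k\Vert\ci{\Lambda\ci{\phi_0}(R_k)}\gtrsim \La w_k\Ra\ci{R_k}\cdot 3^{k}$ after correctly accounting for the Lorentz-norm normalization (the Lorentz fundamental-function normalization contributes an extra $\phi_0(3^{-k})^{-1}\sim 3^k/k$ factor relative to the naive $L\log L$ average, which is exactly the source of the $3^k$).

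The order of steps: (i) recall from Section~\ref{s: reguera-thiele} the precise heights and relative measures of $w_k$ on triadic intervals and single out $R_k$ as the appropriate ancestor of the top-level singularity; (ii) establish $\La w_k\Ra\ci{R_k}\La\sigma_k\Ra\ci{R_k}\gtrsim 1$ from \eqref{pointwise maximal function estimate} together with $\sigma_k\geq w_k/(Mw_k)^2$ and a Cauchy--Schwarz/Jensen step, taking care of the zero set of $w_k$ inside $R_k$; (iii) compute $\Vert w_k\Vert\ci{\Lambda\ci{\phi_0}(R_k)}$ from the definition of the Lorentz norm (Subsection~\ref{s: lorentz norms}) by evaluating the decreasing rearrangement of $w_k|_{R_k}$, whose dominant contribution is the plateau of height $\sim h_k$ on a set of relative measure $\sim 3^{-k}$, yielding the $\sim h_k\cdot\phi_0(3^{-k}\text{-ish})\sim 3^k\La w_k\Ra\ci{R_k}$ bound. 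The main obstacle is step (iii): one must be scrupulous about exactly which triadic level gives the largest ratio of $L^\infty$-height to average on a \emph{single} interval --- the construction has competing generations of singularities and the Lorentz norm is sensitive to the full rearrangement, not just the top value --- and about the precise normalization convention for $\Lambda\ci{\phi_0}$ so that the fundamental-function factor $\phi_0$ evaluated at the small relative measure produces the clean $3^k$ rather than $3^k/k$ or $k$; this is purely a matter of bookkeeping with the Reguera--Thiele parameters, but it is where all the care is needed.
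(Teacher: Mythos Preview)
Your proposal has two genuine gaps, and both are essential.

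\textbf{First gap: the choice of $R_k$ as a triadic interval cannot work.} You repeatedly speak of taking $R_k$ to be a triadic interval (``a single triadic interval $R_k$'', ``a \emph{small} triadic interval deep in the construction''). But the whole point of Proposition~\ref{p: submain imp} is that on triadic intervals both separated bump conditions \emph{do} hold; in particular, there is no triadic interval on which both inequalities of the lemma are satisfied simultaneously. Concretely: on any $K\in\mathbf{K}$ one has (Lemma~\ref{average_estimates_triadic}) $\La w_k\Ra\ci{K}\La\sigma_k\Ra\ci{K}\sim 3^{-k}$, so the second inequality fails there by a factor $3^{-k}$. If instead you take a triadic ancestor $R_k$ of $I(J)$ inside the side child of $K$ (where $w_k=w|\ci{I(J)}1\ci{I(J)}$ is a single plateau), then with $s:=|I(J)|/|R_k|$ you get $\Vert w_k\Vert\ci{\Lambda_{\phi_0}(R_k)}/\La w_k\Ra\ci{R_k}=\phi_0(s)/s=1-\log s\lesssim k$, while $\La w_k\Ra\ci{R_k}\La\sigma_k\Ra\ci{R_k}=s^2$, so the two requirements pull in opposite directions and neither choice of $s$ gives both. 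Your Cauchy--Schwarz argument for the second inequality only yields $\La w_k\Ra\ci{R_k}\La\sigma_k\Ra\ci{R_k}\gtrsim(|\{w_k>0\}\cap R_k|/|R_k|)^2$, which is $\sim 3^{-2k}$ on $K\in\mathbf{K}$. The paper resolves this by taking the \emph{non-triadic} interval $R_k:=I(J)\cup K'$, where $K'\in\text{ch}\ci{\mathbf{K}}(K)$ is the element of $\mathbf{K}$ adjacent to $I(J)$. The half $I(J)$ (on which $w_k$ is constant, hence $\sigma_k=w_k^{-1}$) supplies $\La w_k\Ra\ci{R_k}\La\sigma_k\Ra\ci{R_k}\sim 1$, while the half $K'\in\mathbf{K}$ supplies the large Lorentz norm via $\Vert w_k\Vert\ci{R_k}^{\ast}\geq\tfrac12\Vert w_k\Vert\ci{K'}^{\ast}$.

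\textbf{Second gap: the source of the factor $3^k$ is misidentified.} A single spike of relative measure $\sim 3^{-k}$ carrying a fixed fraction of the mass gives, as you correctly compute, only $\Vert w_k\Vert\ci{\Lambda_{\phi_0}(R_k)}\gtrsim k\La w_k\Ra\ci{R_k}$. Your proposed ``upgrade'' to $3^k$ via an extra normalization factor $\phi_0(3^{-k})^{-1}\sim 3^k/k$ is not correct: the Lorentz norm $\Vert\fdot\Vert\ci{\Lambda_{\phi_0}}$ \emph{is} (up to absolute constants) the $L\log L$ norm --- there is no further normalization. The actual mechanism in the paper (Lemma~\ref{l: entropy computation} and its proof in Subsection~\ref{s: lorentz norm computations}) is that on any $K\in\mathbf{K}$ the weight $w_k$ has an \emph{entire cascade} of plateaus, one for each generation $l\geq 0$, with heights $\sim(3^k/(3^{k-1}+1))^l\La w_k\Ra\ci{K}$ on sets of relative measure $\sim 3^{-k-l}$. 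Summing all of them in \eqref{computation Lorentz} gives
\[
\Vert w_k\Vert\ci{K}^{\ast}\sim\La w_k\Ra\ci{K}\sum_{l\geq 0}\Bigl(\tfrac{3^{k}}{3^{k-1}+1}\Bigr)^{l}\phi_0(3^{-k-l})
\sim\La w_k\Ra\ci{K}\,3^{-k}\sum_{l\geq 1} l\Bigl(\tfrac{3^{k-1}}{3^{k-1}+1}\Bigr)^{l}\sim\La w_k\Ra\ci{K}\,3^{-k}\cdot 3^{2k},
\]
and it is the sum $\sum_l l\,x^l\sim(1-x)^{-2}\sim 3^{2k}$ (with $x=3^{k-1}/(3^{k-1}+1)$) over \emph{all} generations, not a single spike, that produces the $3^k$. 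No single level contributes more than a factor $\sim k$; the self-similar accumulation is essential.
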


\subsubsection{An improvement for triadic intervals} The construction due to Reguera--Thiele \cite{reguera-thiele} relies on the triadic structure of the unit interval. It is a curious fact that if one restricts attention to \emph{triadic} intervals, then the situation regarding separated bump conditions improves. 

Namely, consider the Young function $\Phi$ given by
\begin{equation*}
\Phi(t):=t\log(e+t)(\log(\log(e^e+t)))^r,\qquad0\leq t<\infty,
\end{equation*}
where we recall that $1<r<2$. We show that
\begin{equation*}
\sup_{I\in\mathcal{T}}\Vert\wt{w}\Vert\ci{L^{\Phi}(I)}\La \sigma\Ra\ci{I}<\infty,\qquad \sup_{I\in\mathcal{T}}\Vert\sigma\Vert\ci{L^{\Phi}(I)}\La\wt{w}\Ra\ci{I}<\infty,
\end{equation*}
where $\mathcal{T}$ is the family of all triadic intervals in $\R$. The main estimate one has to prove is that
\begin{equation}
\label{triadic Orlicz finite}
\sup_{I\in\mathcal{T}([0,1))}\Vert\wt{w}_{k}\Vert\ci{L^{\Phi}(I)}\La \sigma_{k}\Ra\ci{I}\lesssim_{r}1,\qquad \sup_{I\in\mathcal{T}([0,1))}\Vert\sigma_{k}\Vert\ci{L^{\Phi}(I)}\La\wt{w}_{k}\Ra\ci{I}\lesssim_{r} 1,
\end{equation}
where $\mathcal{T}([0,1))$ is the family of all triadic subintervals $I$ of $[0,1)$. As previously, instead of directly estimating Orlicz bumps, we estimate certain Lorentz bumps.

More precisely, consider the function $\psi:[0,1]\rightarrow[0,\infty)$ given by
\begin{equation*}
\psi(s):=s(12-\log s)(\log(12-\log s))^{r},\qquad 0<s\leq1,\qquad \psi(0):=0.
\end{equation*}
Standard facts on rearrangement-invariant Banach function spaces, Lorentz spaces and Orlicz spaces imply that for all non-atomic probability spaces $(X,\mu)$ there holds
\begin{equation*}
\Vert f\Vert\ci{L^{\Phi}(X,\mu)}\lesssim_{r}\Vert f\Vert\ci{\Lambda_{\psi}(X,\mu)},
\end{equation*}
for all measurable functions $f\geq 0$ on $X$. We refer to Subsection \ref{s: Lorentz-Orlicz comparison} for details.

Therefore, recalling that $\wt{w}_{k}:=k^{-r}w_{k}$, it suffices to prove that
\begin{equation}
\label{uniform triadic lorentz estimate}
\sup_{I\in\mathcal{T}([0,1))}\Vert w_{k}\Vert\ci{\Lambda_{\psi}(I)}\La \sigma_{k}\Ra\ci{I}\lesssim k^r,\qquad \sup_{I\in\mathcal{T}([0,1))}\Vert\sigma_{k}\Vert\ci{\Lambda_{\psi}(I)}\La w_{k}\Ra\ci{I}\lesssim 1.
\end{equation}
The proof of \eqref{uniform triadic lorentz estimate} is given in Subsection \ref{s: Lorentz-Orlicz comparison}.

Unfortunately, in view of Lemma \ref{l: main entropy estimate} it does not seem possible to get an estimate like the first one in \eqref{triadic Orlicz finite} for general subintervals of $[0,1)$ by merely rescaling the weights $w_k,\sigma_k$, and preserve at the same time the blow-up of the norm of the Hilbert transform.

\section{The Reguera--Thiele \cite{reguera-thiele} construction}
\label{s: reguera-thiele}

We recall here the construction due to Reguera and Thiele \cite{reguera-thiele}, and also used by Reguera and Scurry \cite{reguera-scurry}. For definiteness, by interval we mean a subset of $\R$ of the form $[a,b)$, where $a,b\in\R$, $a<b$.

Let $k$ be a positive integer greater than 3000. For every interval $I$, Reguera--Thiele \cite{reguera-thiele} denote by $I^{m}$ its middle triadic child. Then, they define inductively collections $\mathbf{K}_{i},\mathbf{J}_{i}$ of triadic subintervals of $[0,1)$ as follows:
\begin{equation*}
\mathbf{K}_{0}:=\lbrace [0,1)\rbrace,
\end{equation*}
\begin{equation*}
\mathbf{J}_{i}:=\lbrace K^{m}:~K\in\mathbf{K}_{i-1}\rbrace,~i=1,2,\ldots,
\end{equation*}
\begin{equation*}
\mathbf{K}_{i}:=\bigcup_{J\in\mathbf{J}_{i}}\lbrace\text{triadic subintervals of }J\text{ of length } 3^{1-k}|J|\rbrace,~i=1,2,\ldots.
\end{equation*}

Reguera--Thiele \cite{reguera-thiele} set $\mathbf{J}:=\bigcup_{i=1}^{\infty}\mathbf{J}_{i}$, and for all $J\in\mathbf{J}$ they choose a triadic interval $I(J)$ adjacent to $J$ and of length $3^{1-k}|J|$. Reguera--Thiele \cite{reguera-thiele} choose whether to place each $I(J)$ to the right or to the left of $J$ via an inductive scheme, in a way that allows them to establish the estimate 
\begin{equation*}
|Hw_k(x)|\geq (k/3)w_{k}(x),\qquad\text{ for all }x\in E_k
\end{equation*}
for the Hilbert transform $H$, where $w=w_k$ is the weight on $[0,1)$ constructed in \cite{reguera-thiele} (we recall that construction below) and $E_k$ is the set
\begin{equation*}
E_{k}:=\bigcup_{J\in\mathbf{J}}(I(J))^{m}.
\end{equation*}
We refer to \cite{reguera-thiele} for the relevant details. We note that these choices will not play an essential role in the estimates for sparse $p$-functions below.

\begin{rem}
Induction shows that

(a) $\#\mathbf{K}_{i}=3^{i(k-1)}$ and each interval in $\mathbf{K}_{i}$ has length $3^{-ik}$, for all $i=0,1,2,\ldots$

(b) $\#\mathbf{J}_{i}=3^{(i-1)(k-1)}$ and each interval in $\mathbf{J}_{i}$ has length $3^{-(i-1)k-1}$, for all $i=1,2,\ldots$.
\end{rem}

Next, Reguera--Thiele \cite{reguera-thiele} define a weight $w$ on $[0,1)$, given as the pointwise and weak in $L^1([0,1))$ limit of a sequence $w^{(-1)},w^{(0)},w^{(1)},\ldots$ of weights on $[0,1)$. The latter weights are constructed inductively as follows.

One begins by setting $w^{(-1)}:=1\ci{[0,1)}$. Assuming now that for some $i\geq0$ one has defined $w^{(i-1)}$, one obtains the weight $w^{(i)}$ in the following way. The weight $w^{(i)}$ is defined to coincide with $w^{(i-1)}$ outside $\bigcup_{K\in\mathbf{K}_{i}}K$. Moreover, for all $K\in\mathbf{K}_{i}$ the restriction $w^{(i)}|\ci{K}$ of $w^{(i)}$ on $K$ is defined to be
\begin{equation*}
w^{(i)}|\ci{K}:=w^{(i-1)}(K)\frac{1\ci{K^{m}\cup I(K^{m})}}{|K^{m}\cup I(K^{m})|}.
\end{equation*}
Reguera--Thiele \cite{reguera-thiele} point out that $\lbrace w>0\rbrace=\bigcup_{J\in\mathbf{J}}I(J)$, and that the weight $w$ is constant on each $I(J)$.

Fix $p\in(1,\infty)$. Reguera--Thiele \cite{reguera-thiele} for $p=2$, and  Reguera--Scurry \cite{reguera-scurry} for any $1<p<\infty$, define the weight $\sigma:=\frac{w}{(Mw)^p}$ on $[0,1)$, where $M$ is the Hardy-Littlewood maximal function. Reguera--Scurry \cite{reguera-scurry} prove that for all $J\in\mathbf{J}$ and for all $x\in I(J)$ there holds $Mw(x)\leq 13w(x)$, so that in fact $\sigma\sim_{p} w^{1-p}1\ci{\lbrace w>0\rbrace}$.

Let us in what follows simply define
\begin{equation*}
\sigma:=w^{1-p}1\ci{\lbrace w>0\rbrace}=w^{-1/(p'-1)}1\ci{\lbrace w>0\rbrace},
\end{equation*}
where $p':=\frac{p}{p-1}$ is the H\"{o}lder conjugate exponent to $p$.

We set throughout $\mathbf{K}:=\bigcup_{i=0}^{\infty}\mathbf{K}_{i}$. Moreover, for all $i=0,1,2,\ldots$ and for all $K\in\mathbf{K}_i$ we let $\text{ch}\ci{\mathbf{K}}(K)$ be the family of all intervals in $\mathbf{K}_{i+1}$ that are contained in $K$.

\begin{rem}\label{r: vanish}
For all $K\in\mathbf{K}$, setting $J:=K^{m}$ we notice that $w,\sigma$ vanish in $K\setminus(J\cup I(J))$.
\end{rem}

\subsection{Estimating averages over intervals}

In this subsection we estimate averages over intervals of the weights $w,\sigma$. We begin with triadic intervals.

\begin{lm}\label{average_estimates_triadic}
The following hold.

(a) We have
\begin{equation*}
w|\ci{I(J)}\equiv\left(\frac{3^{k}}{3^{k-1}+1}\right)^{i},~\sigma|\ci{I(J)}\equiv\left(\frac{3^{k}}{3^{k-1}+1}\right)^{-(p-1)i},~\forall J\in\mathbf{J}_{i},~\forall i=1,2,\ldots,
\end{equation*}
and
\begin{equation*}
\La w\Ra\ci{K}=\left(\frac{3^{k}}{3^{k-1}+1}\right)^{i},~\La \sigma\Ra\ci{K}= c_{k,p}3^{-k}\left(\frac{3^{k}}{3^{k-1}+1}\right)^{-(p-1)i},~\forall K\in\mathbf{K}_{i},~\forall i=0,1,2,\ldots,
\end{equation*}
where $1\lesssim_{p}c_{k,p}\lesssim1$.

(b) For all $K\in\mathbf{K}$, for all $K'\in\text{ch}\ci{\mathbf{K}}(K)$, and for all triadic subintervals $L$ of $J:=K^{m}$ that are not contained in any interval in $\text{ch}\ci{\mathbf{K}}(K)$, we have
\begin{equation*}
\La w\Ra\ci{K}\sim\La w\Ra\ci{L}=\La w\Ra\ci{K'}=\La w\Ra\ci{I(J)},\qquad \La \sigma\Ra\ci{K}\sim_{p}\La \sigma\Ra\ci{L}=\La \sigma\Ra\ci{K'}\sim_{p}3^{-k}\La \sigma\Ra\ci{I(J)},
\end{equation*}
\begin{equation*}
w(K)\sim3^{k}w(I(J))=3^{k}w(K'),\qquad\sigma(K)\sim_{p}\sigma(I(J))\sim_{p}3^{k}\sigma(K').
\end{equation*}
\end{lm}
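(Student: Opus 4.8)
The plan is to compute everything directly from the explicit inductive description of the weight $w=w_k$ from Section \ref{s: reguera-thiele}. Recall that $w^{(i)}|\ci{K}=w^{(i-1)}(K)\cdot 1\ci{K^m\cup I(K^m)}/|K^m\cup I(K^m)|$ for $K\in\mathbf{K}_i$, and that $|K^m\cup I(K^m)|=|K^m|+|I(K^m)|=3^{-ik}+3^{-ik}=2\cdot 3^{-ik}$ when $K\in\mathbf{K}_i$ (using $|K|=3^{-ik}$, $|K^m|=3^{-1}|K|$ and $|I(K^m)|=3^{1-k}|K^m|=3^{-k}|K|$; one should double-check these lengths against the Remark after the construction). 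The point is that on each step the mass $w^{(i-1)}(K)=|K|\La w^{(i-1)}\Ra\ci{K}$ gets redistributed uniformly onto a set of measure $(3^{-1}+3^{-k})|K|$, so the new constant value picks up a factor $(3^{-1}+3^{-k})^{-1}=3^k/(3^{k-1}+1)$. Iterating from $w^{(-1)}\equiv 1$ gives part (a): after $i$ steps, the surviving level set (which is $I(J)$ for $J\in\mathbf{J}_i$, together with the intermediate pieces) carries the constant value $(3^k/(3^{k-1}+1))^i$.

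First I would prove by induction on $i$ the formula $w|\ci{I(J)}\equiv(3^k/(3^{k-1}+1))^i$ for $J\in\mathbf{J}_i$: the base case $i=1$ follows from $w^{(0)}|\ci{[0,1)}=w^{(-1)}([0,1))\cdot 1\ci{\ldots}/|\ldots|=(3^{-1}+3^{-k})^{-1}1\ci{[0,1)^m\cup I([0,1)^m)}$ on the relevant set, and the inductive step is exactly the mass-redistribution identity above, noting that the weight has stabilized on $I(J)$ once we pass to step $i$ (later steps only modify $w$ inside the $\mathbf{K}_j$, $j>i$, which are disjoint from the already-isolated intervals $I(J)$). Since $\sigma=w^{1-p}1\ci{\{w>0\}}$ by definition, the formula for $\sigma|\ci{I(J)}$ is immediate. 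For $K\in\mathbf{K}_i$, the value $\La w\Ra\ci{K}$: by construction $w^{(i-1)}$ is constant on $K$ equal to the value inherited from step $i-1$, which by the same induction is $(3^k/(3^{k-1}+1))^i$ — here one must be careful that $K\in\mathbf{K}_i$ sits inside some $J\in\mathbf{J}_i$ which sits inside some $K'\in\mathbf{K}_{i-1}$, so the value on $K$ before step $i$ equals the value $w^{(i-1)}$ takes on $J=(K')^m$, and that is the step-$(i-1)$ value. Then $\La\sigma\Ra\ci{K}=\frac{1}{|K|}\int_K w^{1-p}1\ci{\{w>0\}}$; since $\{w>0\}\cap K=I(J)$ plus finer pieces all of total measure $\sim 3^{-k}|K|$ — actually one should track this more carefully: $\{w>0\}\cap K=\bigcup_{J'\in\mathbf{J},\,I(J')\subseteq K}I(J')$, and the total measure is $|K|\sum_{j\geq 1}(\text{count})\cdot(\text{length})$, a geometric-type sum giving a factor $c_{k,p}3^{-k}$ with $1\lesssim_p c_{k,p}\lesssim 1$; but since on $\{w>0\}\cap K$ the weight $w$ is \emph{not} constant (it takes the values $(3^k/(3^{k-1}+1))^j$ for all $j\geq i$), the sum $\int_K w^{1-p}$ is dominated by its largest term, namely the contribution of $I(J)$ where $w$ is smallest, i.e. $w^{1-p}=(3^k/(3^{k-1}+1))^{-(p-1)i}$ times $|I(J)|\sim 3^{-k}|K|$; the remaining terms form a convergent geometric series (here $p>1$ is used), contributing the bounded factor $c_{k,p}$.

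For part (b): fix $K\in\mathbf{K}$, set $J:=K^m$, and let $K'\in\mathrm{ch}\ci{\mathbf{K}}(K)$, so $K'\in\mathbf{K}_{i+1}$ with $K'\subseteq J$. By the construction $w$ is constant on $J\setminus\bigcup_{K''\in\mathrm{ch}\ci{\mathbf{K}}(K)}K''$ — indeed on the ``leftover'' part of $J$ the weight is never touched again after step $i$, so it equals $\La w\Ra\ci{K}$ there, which also equals the common value $w|\ci{I(J)}$ since $I(J)$ is one of the pieces carrying the step-$i$ value; hence $\La w\Ra\ci{L}=\La w\Ra\ci{I(J)}=\La w\Ra\ci{K}$ for any triadic $L\subseteq J$ avoiding the children, and also $\La w\Ra\ci{K'}=\La w\Ra\ci{K}$ by part (a) applied at level $i+1$ (the value on $K'$ before step $i+1$ equals the value on $J$, namely the step-$i$ value $\La w\Ra\ci{K}$ — wait, part (a) gives $\La w\Ra\ci{K'}=(3^k/(3^{k-1}+1))^{i+1}$ which is a factor $3^k/(3^{k-1}+1)\sim 1$ larger, explaining the ``$\sim$''). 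The $\sigma$-statements follow by combining the $w$-values with the measure bookkeeping from part (a): $\La\sigma\Ra\ci{K'}\sim_p 3^{-k}\La\sigma\Ra\ci{I(J)}$ because $\sigma$ restricted to the single interval $I(J)$ is constant while $\sigma$ averaged over $K'$ again only sees a $3^{-k}$-proportion of positive-mass pieces. The mass identities $w(K)\sim 3^k w(I(J))=3^k w(K')$ and $\sigma(K)\sim_p\sigma(I(J))\sim_p 3^k\sigma(K')$ are then just $|K|=3^k|K'|\sim 3^k|I(J)|$ (up to the constant from $|I(J)|$ vs $|K'|$) times the average identities. The main obstacle is purely bookkeeping: getting the geometric sums over the nested intervals $I(J')\subseteq K$ exactly right, confirming the claimed length relations $|I(J)|$, $|K'|$ vs $|K|$ from the Remark, and keeping straight which of the relations are equalities versus merely $\sim$ (the $\sim$'s all come from the single factor $3^k/(3^{k-1}+1)$ and from $c_{k,p}$, both of which are $\sim 1$ uniformly in $k$ and, for $c_{k,p}$, $\sim_p 1$ in $p$). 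No delicate estimate is needed beyond summing a geometric series with ratio bounded away from $1$ uniformly in $k$.
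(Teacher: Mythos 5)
Your part (a) follows essentially the same route as the paper: the mass--redistribution identity $w^{(i)}|\ci{K}=w^{(i-1)}(K)\,1\ci{K^m\cup I(K^m)}/|K^m\cup I(K^m)|$ with $|K^m\cup I(K^m)|=(3^{-1}+3^{-k})|K|$ gives the factor $3^k/(3^{k-1}+1)$ per generation by induction, and $\La\sigma\Ra\ci{K}$ comes from summing the geometric series over the levels $l\geq i+1$ (the paper computes it exactly, with $c_{k,p}=3a_{k,p}/(1-a_{k,p})$ and ratio $a_{k,p}\in(3^{-p},1/3)$). Two slips there are harmless but should be fixed: $|K^m|+|I(K^m)|$ is $3^{-ik-1}+3^{-(i+1)k}$, not $2\cdot 3^{-ik}$ (your own parenthetical has the correct lengths), and the constant value of $\sigma$ on $I(J)$ for $J=K^m$, $K\in\mathbf{K}_i$, carries the exponent $-(p-1)(i+1)$, not $-(p-1)i$; the discrepancy is a factor $\bigl(3^k/(3^{k-1}+1)\bigr)^{-(p-1)}\sim_p 1$ and is absorbed into $c_{k,p}$.

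Part (b), however, rests on a claim that is vacuous. The family $\text{ch}\ci{\mathbf{K}}(K)$ consists of \emph{all} triadic subintervals of $J=K^m$ of length $3^{1-k}|J|$, so these children tile $J$ completely and the set $J\setminus\bigcup_{K''\in\text{ch}\ci{\mathbf{K}}(K)}K''$ on which you argue ``the weight is never touched again after step $i$'' is empty. Consequently the intervals $L$ in the statement are not intervals ``avoiding the children'': a triadic $L\subseteq J$ not contained in any single child is precisely a disjoint union of at least three children. The correct (and the paper's) justification is that, by part (a), every $K''\in\text{ch}\ci{\mathbf{K}}(K)$ has the same length and the same averages, $\La w\Ra\ci{K''}=\La w\Ra\ci{I(J)}$ and $\La\sigma\Ra\ci{K''}=\La\sigma\Ra\ci{K'}$ (indeed $|K''|=|I(J)|=3^{-k}|K|$), so any disjoint union of children inherits these common averages; the remaining assertions are then the ratios $\La w\Ra\ci{K}/\La w\Ra\ci{I(J)}=(3^{k-1}+1)/3^k\sim 1$ and $\La\sigma\Ra\ci{K'}=c_{k,p}3^{-k}\La\sigma\Ra\ci{I(J)}$, together with $|K|=3^k|K'|$. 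You do correctly catch, at the end, that $\La w\Ra\ci{K}$ and $\La w\Ra\ci{K'}$ differ by the factor $3^k/(3^{k-1}+1)$, but the argument for $L$ needs to be replaced as above.
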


\begin{proof}
(a) For all $i=0,1,2,\ldots$, for all $K\in\mathbf{K}_{i}$, and for all $K'\in\mathbf{K}_{i+1}$ with $K'\subseteq K$, we have $w(K)=w^{(i-1)}(K)=w^{(i)}(K)$ and
\begin{equation*}
w(I(J))=w(K')=\frac{w(K)}{3^{k-1}+1}.
\end{equation*}
The claimed formulas for $w$ follow then by induction. Moreover, we have
\begin{align*}
\sigma(K)&=\sum_{l=i+1}^{\infty}\sum_{\substack{J\in\mathbf{J}_{l}\\J\subseteq K}}\sigma(I(J))=
\sum_{l=i+1}^{\infty}3^{(l-i-1)(k-1)}\fdot3^{-lk}\left(\frac{3^{k}}{3^{k-1}+1}\right)^{-(p-1)l}\\
&=3^{-k(i+1)}\left(\frac{3^{k}}{3^{k-1}+1}\right)^{-(p-1)i}\sum_{l=1}^{\infty}3^{-l+1}\left(\frac{3^{k}}{3^{k-1}+1}\right)^{-(p-1)l},
\end{align*}
therefore
\begin{align*}
\La \sigma\Ra\ci{K}=3^{-k}\left(\frac{3^{k}}{3^{k-1}+1}\right)^{-(p-1)i}\frac{3a_{k,p}}{1-a_{k,p}},
\end{align*}
where
\begin{equation*}
a_{k,p}:=\frac{(3^{k-1}+1)^{p-1}}{3^{k(p-1)+1}}\in\left(\frac{1}{3^{p}},\frac{1}{3}\right).
\end{equation*}

(b) Immediate from (a), after noting that $L$ can be written as the disjoint union of triadic subintervals of $J$ of length $3^{1-k}|J|$, all of which are by definition elements of $\text{ch}\ci{\mathbf{K}}(K)$, and that $|K'|=3^{1-k}|J|=|I(J)|=3^{-k}|K|$.
\end{proof}

We now turn to general intervals.

\begin{lm}\label{average_estimates}
Let $I$ be a subinterval of $[0,1)$. Let $K$ be the smallest interval in $\mathbf{K}$ containing $I$. Set $J:=K^{m}$.

(a) If $I\cap J=\emptyset$, then
\begin{equation*}
\La w\Ra\ci{I}\leq\La w\Ra\ci{I(J)},~\La \sigma\Ra\ci{I}\leq\La\sigma\Ra\ci{I(J)}.
\end{equation*}

(b) If $I$ intersects $J$, but not $I(J)$, then
\begin{equation*}
\La w\Ra\ci{I}\lesssim\La w\Ra\ci{K},~\La \sigma\Ra\ci{I}\lesssim_{p}\La \sigma\Ra\ci{K}.
\end{equation*}

(c) If $I$ intersects both $J$ and $I(J)$, then
\begin{equation*}
\La w\Ra\ci{I}\lesssim\La w\Ra\ci{I(J)},~\La\sigma\Ra\ci{I}\lesssim_{p}\max\left(\frac{|I\cap I(J)|}{|I|},3^{-k}\right)\La\sigma\Ra\ci{I(J)}\leq\frac{|I(J)|}{|I|}\La\sigma\Ra\ci{I(J)}.
\end{equation*}
\end{lm}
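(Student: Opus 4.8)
The plan is to analyze the three cases directly using the fact that $K$ is the smallest interval in $\mathbf{K}$ containing $I$, so that $I$ is not contained in any element of $\text{ch}\ci{\mathbf{K}}(K)$. By Remark \ref{r: vanish}, $w$ and $\sigma$ vanish outside $J\cup I(J)$ within $K$, so in every case the mass of $w$ (and $\sigma$) carried by $I$ comes only from $I\cap(J\cup I(J))$.

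\emph{Case (a): $I\cap J=\emptyset$.} Then $I$ meets $\{w>0\}\cap K$ only inside $I(J)$, and by Lemma \ref{average_estimates_triadic} the weight $w$ is constant on $I(J)$ equal to $\La w\Ra\ci{I(J)}$ (and similarly $\sigma$ is constant on $I(J)$). Hence $\int_I w=\int_{I\cap I(J)}w=|I\cap I(J)|\,\La w\Ra\ci{I(J)}\leq |I|\,\La w\Ra\ci{I(J)}$, giving $\La w\Ra\ci{I}\leq\La w\Ra\ci{I(J)}$; the estimate for $\sigma$ is identical. (I would also need to note that $I$ cannot meet $\{w>0\}$ outside $K$ except possibly through a neighboring $I(J')$ of comparable or smaller average, but since $I(J)$ is adjacent to $J$ and $I\subseteq K$, the only relevant contribution within $\{w>0\}$ reachable from $I$ is $I(J)$ itself; this is where I would invoke the structure of the construction carefully, checking that $\{w>0\}\cap K=I(J)\cup\bigcup_{J'\subsetneq J}I(J')$ and that within $K\setminus J$ only $I(J)$ survives.)

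\emph{Case (b): $I$ meets $J$ but not $I(J)$.} Now $\int_I w=\int_{I\cap J}w$. Since $I$ is not contained in any child in $\text{ch}\ci{\mathbf{K}}(K)$, it spans across (or straddles the boundary of) several such children, so $|I|\gtrsim 3^{1-k}|J|=|I(J)|$, i.e. $|I|\gtrsim 3^{-k}|K|$. On the other hand $\int_{I\cap J}w\leq\int_J w=w(J)=w(K)$ since $w$ vanishes on $K\setminus J$. Combining, $\La w\Ra\ci{I}=\frac{w(I)}{|I|}\lesssim\frac{w(K)}{3^{-k}|K|}=3^k\La w\Ra\ci{K}$ — but this loses a factor $3^k$, so instead I must be sharper: within $J$, the weight $w$ equals $\La w\Ra\ci{I(J)}=\La w\Ra\ci{K}$ on the middle-grandchild region and $0$ elsewhere in $J$ (by Remark \ref{r: vanish} applied one level down, plus Lemma \ref{average_estimates_triadic}(b)), and $I\cap J$ can overlap that region on a set of measure at most $|I|$ while $|I|\gtrsim|I(J)|$ ensures no concentration blow-up — so $w(I\cap J)\leq w(J)\sim 3^k w(I(J))=3^k\La w\Ra\ci{I(J)}|I(J)|\lesssim 3^k\La w\Ra\ci{K}\cdot 3^{-k}|K|\cdot(|I|/|K|)\cdot 3^k$... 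I will in fact argue more cleanly via Lemma \ref{average_estimates_triadic}(b): every triadic $L\subseteq J$ not inside a $\mathbf{K}$-child has $\La w\Ra\ci{L}=\La w\Ra\ci{K}$ and $\La\sigma\Ra\ci{L}\sim_p\La\sigma\Ra\ci{K}$, and $I\cap J$ is covered by boundedly many such $L$'s of comparable size, yielding $\La w\Ra\ci{I}\lesssim\La w\Ra\ci{K}$ and $\La\sigma\Ra\ci{I}\lesssim_p\La\sigma\Ra\ci{K}$.

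\emph{Case (c): $I$ meets both $J$ and $I(J)$.} Since $I$ is connected and meets both $J$ and its adjacent child $I(J)$ (with $|I(J)|=3^{-k}|K|$), $I$ must stretch over the whole of $I(J)$ or at least contain a full endpoint neighborhood, forcing $|I|\gtrsim|I(J)|$; in particular $I$ is not contained in $I(J)$. For $w$: $w(I)=w(I\cap J)+w(I\cap I(J))$, and by Case (b)'s analysis $w(I\cap J)\lesssim|I\cap J|\,\La w\Ra\ci{I(J)}\leq|I|\,\La w\Ra\ci{I(J)}$ (using $\La w\Ra\ci{K}=\La w\Ra\ci{I(J)}$), while $w(I\cap I(J))\leq|I|\,\La w\Ra\ci{I(J)}$ since $w$ is constant on $I(J)$; adding gives $\La w\Ra\ci{I}\lesssim\La w\Ra\ci{I(J)}$. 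For $\sigma$ I need the sharper bound with the $\max(|I\cap I(J)|/|I|,3^{-k})$ factor: $\sigma(I\cap I(J))=|I\cap I(J)|\,\La\sigma\Ra\ci{I(J)}$ exactly, and $\sigma(I\cap J)\lesssim_p|I\cap J|\,\La\sigma\Ra\ci{K}\sim_p|I\cap J|\cdot 3^{-k}\La\sigma\Ra\ci{I(J)}\leq|I|\cdot 3^{-k}\La\sigma\Ra\ci{I(J)}$, using Lemma \ref{average_estimates_triadic}(b); dividing the sum by $|I|$ yields $\La\sigma\Ra\ci{I}\lesssim_p\bigl(\tfrac{|I\cap I(J)|}{|I|}+3^{-k}\bigr)\La\sigma\Ra\ci{I(J)}\lesssim_p\max\bigl(\tfrac{|I\cap I(J)|}{|I|},3^{-k}\bigr)\La\sigma\Ra\ci{I(J)}$, and the final inequality $\leq\tfrac{|I(J)|}{|I|}\La\sigma\Ra\ci{I(J)}$ follows since $|I\cap I(J)|\leq|I(J)|$ and $3^{-k}=|I(J)|/|K|\leq|I(J)|/|I|$ (as $I\subseteq K$).

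The main obstacle will be Case (b): bounding $w(I\cap J)$ and $\sigma(I\cap J)$ without losing a power of $3^k$. The resolution is to avoid the crude bound $w(I\cap J)\leq w(J)$ and instead exploit that $J$ itself decomposes under the construction exactly like $[0,1)$ does one scale down — so $w$ restricted to $J$ has the same "constant on the surviving child, zero elsewhere" structure, and Lemma \ref{average_estimates_triadic}(b) already packages precisely the statement that averages over triadic pieces $L\subseteq J$ straddling $\mathbf{K}$-children match $\La w\Ra\ci{K}$ up to constants. Decomposing $I\cap J$ into $O(1)$ such pieces of size comparable to $|I|$ then closes the estimate, and the same decomposition handles $\sigma$ with the $\sim_p$ constants tracked throughout.
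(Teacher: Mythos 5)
Your overall strategy --- split the mass of $I$ into the contributions from $I\cap I(J)$ and $I\cap J$, use the exact averages from Lemma \ref{average_estimates_triadic}, and note that minimality of $K$ prevents $I$ from sitting inside a single child of $K$ --- is the same as the paper's, and your case (a) and the final chain of inequalities in (c) are fine. But there is a genuine gap in (b) and (c): both of your length lower bounds ``$|I|\gtrsim|I(J)|$'' are false. In (b), an interval not contained in any $K'\in\text{ch}\ci{\mathbf{K}}(K)$ need only contain an endpoint of some child; it can be arbitrarily short. Likewise in (c), an interval meeting both $J$ and $I(J)$ need only contain their common endpoint and can again be arbitrarily short. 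For such short $I$ your step ``cover $I\cap J$ by boundedly many triadic $L$ of comparable size satisfying the hypotheses of Lemma \ref{average_estimates_triadic}(b)'' is impossible: any triadic $L\subseteq J$ not contained in a child of $K$ has $|L|\geq 3|I(J)|\gg |I|$, and its average does not control the average over a much smaller subinterval.

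The missing ingredient is the vanishing observation the paper leans on: by Remark \ref{r: vanish} applied to each child $K'$ of $K$, the support of $w,\sigma$ inside $K'$ is $(K')^{m}\cup I((K')^{m})$, which stays at distance at least $\left(\frac{1}{3}-3^{-k}\right)|K'|$ from the endpoints of $K'$. Hence any $I$ (resp.\ $I\cap J$) that contains an endpoint of a child and has length less than $\left(\frac{1}{3}-3^{-k}\right)|I(J)|$ carries no mass at all, and the estimate is trivial there. Only after disposing of that case may one assume $|I|\gtrsim|I(J)|$ and run a counting argument (the paper counts the $N$ children of $K$ meeting $I$, notes $\max(1,N-2)|I(J)|\lesssim|I|$ and $w(I)\leq Nw(K')$, $\sigma(I)\leq N\sigma(K')$). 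The same dichotomy is needed in (c) to justify $\sigma(I\cap J)\lesssim_{p}|I\cap J|\La\sigma\Ra\ci{K}$ when $I\cap J$ is short or lies inside a single child $K'$ (in the latter sub-case one uses $\sigma(I\cap J)\leq\sigma(K')$ together with $|I\cap J|\gtrsim|K'|$). With that case analysis added, your proof closes and coincides with the paper's.
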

\begin{proof}
(a) Clear, since $w(I)=w(I\cap I(J))$ and same for $\sigma$.

(b) Assume $I$ intersects $J$ but not $I(J)$. Since by assumption $I$ is not contained in any triadic subinterval of $J$ of length $|I(J)|=3^{1-k}|J|$, we deduce that it must contain an endpoint of some triadic subinterval of $J$ of length $3^{1-k}|J|$, which is by definition an interval in $\mathbf{K}$. Therefore, if $|I|<\left(\frac{1}{3}-3^{-k}\right)|I(J)|$, then Remark \ref{r: vanish} implies immediately that $w,\sigma$ vanish on $I$.

Assume now that $|I|\geq\left(\frac{1}{3}-3^{-k}\right)|I(J)|$. Let $\mathcal{I}$ be the family of all triadic subintervals of $J$ of length $|I(J)|=3^{1-k}|J|$ intersecting $I$, and set $N:=\#\mathcal{I}$. Choose $K'\in\mathcal{I}$. It is then clear that
\begin{equation*}
\max(1,(N-2))|K'|=\max(1,(N-2))|I(J)|\lesssim|I|,
\end{equation*}
and it also follows from Lemma \ref{average_estimates_triadic} that
\begin{equation*}
w(I)\leq Nw(K'),~\sigma(I)\leq N\sigma(K'),
\end{equation*}
implying that
\begin{equation*}
\La w\Ra\ci{I}\lesssim\La w\Ra\ci{K'}\sim\La w\Ra\ci{K},~\La \sigma\Ra\ci{I}\lesssim\La \sigma\Ra\ci{K'}\sim_{p}\La \sigma\Ra\ci{K}.
\end{equation*}

(c) Assume that $I$ intersects both $J$ and $I(J)$. In view of Remark \ref{r: vanish} and the facts that $\La \sigma\Ra\ci{K}\sim_{p} 3^{-k}\La\sigma\Ra\ci{I(J)}$ and $|I|\leq |K|=3^{k}|I(J)|$, it is easy to see that it suffices to prove that
\begin{equation*}
\La w\Ra\ci{I\cap J}\lesssim\La w\Ra\ci{I(J)},~\La\sigma\Ra\ci{I\cap J}\lesssim_{p}\La\sigma\Ra\ci{K}.
\end{equation*}
Note that $I\cap J$ contains the common endpoint of $J,I(J)$. Thus, if $|I\cap J|\leq \left(\frac{1}{3}-3^{-k}\right)|I(J)|$, then $w,\sigma$ vanish on $I\cap J$ and we have nothing to show. If $I\cap J$ is not contained in any triadic subinterval of $J$ of length $3^{1-k}|J|$, then we are done by (b).

Assume now that $|I\cap J|>\left(\frac{1}{3}-3^{-k}\right)|I(J)|$, and that $I\cap J$ is contained in a triadic subinterval $K'$ of $J$ of length $3^{1-k}|J|$. Then $w(I\cap J)\leq w(K')$, $\sigma(I\cap J)\leq \sigma(K')$, and $|I\cap J|\gtrsim |K'|$, thus we are done by Lemma \ref{average_estimates_triadic}.
\end{proof}

\begin{rem}\label{A_p}
Lemma \ref{average_estimates} shows in particular that
\begin{equation}
\label{A_p-estimate}
\sup\ci{I}\La w\Ra\ci{I}^{p-1}\La\sigma\Ra\ci{I},~\sup\ci{I}\La \sigma\Ra\ci{I}^{p'-1}\La w\Ra\ci{I}\lesssim_{p}1,
\end{equation}
where supremum is taken over all subintervals $I$ of $[0,1)$. Of course, estimate \eqref{A_p-estimate} also follows immediately from the observation due to Reguera--Scurry \cite{reguera-scurry} (which can be deduced from Lemmas \ref{average_estimates_triadic} and \ref{average_estimates} as well) that $Mw\lesssim w$ on $\lbrace w>0\rbrace$, so $\sigma\sim\frac{w}{(Mw)^{p}}$.
\end{rem}

In the sequel we will make use of the following comparison lemma.

\begin{lm}\label{comparison_contain_endpoint}
Let $K\in\mathbf{K}$, and let $L$ be a subinterval of $K$ sharing an endpoint with $K$. Then, for all intervals $L'\subseteq L$ sharing an endpoint with $K$, we have $\La w\Ra\ci{L'}\lesssim\La w\Ra\ci{L}$ and $\La \sigma\Ra\ci{L'}\lesssim\La \sigma\Ra\ci{L}$.
\end{lm}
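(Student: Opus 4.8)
The plan is to exploit Remark~\ref{r: vanish}: inside $K$ both $w$ and $\sigma$ are supported in the interval $S:=K^m\cup I(K^m)$, which lies in the ``middle'' of $K$ and is therefore bounded away from both endpoints of $K$ by a fixed fraction of $|K|$. Once this is in hand, any subinterval of $K$ with an endpoint on $\partial K$ that actually sees the weights must already have length comparable to $|K|$, and the lemma follows from monotonicity of $w(\cdot)$ and $\sigma(\cdot)$.

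Concretely, I would first set $J:=K^m$, $S:=J\cup I(J)$, note that $S$ is an interval (as $I(J)$ is triadic and adjacent to $J$) contained in $K$, and verify the geometric estimate $\operatorname{dist}(\partial K,S)\geq\bigl(\tfrac13-3^{-k}\bigr)|K|>\tfrac14|K|$: the middle triadic child $J$ is at distance $|K|/3$ from each endpoint of $K$, while $I(J)$, having length $3^{1-k}|J|=3^{-k}|K|$, shifts this boundary by at most $3^{-k}|K|$ on one side, a loss which is negligible since $k>3000$.

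Next, let $x_0$ be an endpoint of $K$ that $L'$ shares with $K$ (necessarily the same one $L$ shares with $K$ unless $L=K$, in which case the distance estimate above holds for either endpoint). I would distinguish two cases. If $L'\cap S=\emptyset$, then, since $L'\subseteq K$ and $w,\sigma$ vanish on $K\setminus S$, we have $\La w\Ra\ci{L'}=\La\sigma\Ra\ci{L'}=0$ and both asserted inequalities are trivial. If $L'\cap S\neq\emptyset$, then $L'$ is an interval with endpoint $x_0$ reaching $S$, so $|L'|\geq\operatorname{dist}(x_0,S)>\tfrac14|K|\geq\tfrac14|L|$; combining $|L|\leq 4|L'|$ with $w(L')\leq w(L)$ and $\sigma(L')\leq\sigma(L)$ (immediate from $L'\subseteq L$) gives $\La w\Ra\ci{L'}=w(L')/|L'|\leq 4\,w(L)/|L|=4\La w\Ra\ci{L}$, and the same for $\sigma$.

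There is no genuine obstacle here; the only points requiring care are bookkeeping the endpoint common to $K$, $L$, $L'$ (so that ``$L'$ reaches $S$'' forces $|L'|\gtrsim|K|$) and observing that the geometric constant $\tfrac13-3^{-k}$ is bounded below by an absolute constant thanks to $k>3000$. In particular the implied constant in the lemma is absolute and one may take it to be $4$.
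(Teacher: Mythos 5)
Your proof is correct and follows essentially the same route as the paper's: the paper likewise splits into the case where $L'$ misses $J\cup I(J)$ (so the averages vanish) and the case where it meets $J\cup I(J)$ (forcing $|L'|\gtrsim|K|\geq|L|$, after which monotonicity of $w(\cdot)$ and $\sigma(\cdot)$ finishes the argument). You have simply supplied the explicit distance estimate $\bigl(\tfrac13-3^{-k}\bigr)|K|>\tfrac14|K|$ and the resulting constant $4$, which the paper leaves implicit.
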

\begin{proof}
The result is clear if $L'$ intersects neither $J$ nor $I(J)$. If $L'$ intersects $J\cup I(J)$, then $|L'|\gtrsim |K|\geq|L|$ and thus the desired result is again clear.
\end{proof}

We conclude this subsection with the following observation.

\begin{lm}\label{packing_K}
Let $K\in\mathbf{K}$. Then, we have
\begin{equation*}
\sum_{\substack{K'\in\mathbf{K}\\K'\subseteq K}}w(K')\sim 3^{k}w(K),~\sum_{\substack{K'\in\mathbf{K}\\K'\subseteq K}}\sigma(K')\sim_{p}\sigma(K).
\end{equation*}
\end{lm}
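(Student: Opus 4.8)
The plan is to prove Lemma~\ref{packing_K} by induction on the generation of $K$, exploiting the self-similar tree structure of the $\mathbf{K}_i$'s together with the averaging formulas from Lemma~\ref{average_estimates_triadic}. First I would fix $K\in\mathbf{K}_i$, set $J:=K^m$, and recall from Remark~\ref{r: vanish} that $w$ and $\sigma$ are supported (inside $K$) on $J\cup I(J)$, while the intervals $K'\in\mathbf{K}$ with $K'\subsetneq K$ fall into two types: those contained in some $K''\in\text{ch}\ci{\mathbf{K}}(K)$, and the elements of $\text{ch}\ci{\mathbf{K}}(K)$ themselves (which are exactly the triadic subintervals of $J$ of length $3^{1-k}|J|$, hence $\#\text{ch}\ci{\mathbf{K}}(K)=3^{k-1}$). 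The sum $\sum_{K'\subseteq K}w(K')$ therefore splits as $w(K)$ (the term $K'=K$) plus $\sum_{K''\in\text{ch}\ci{\mathbf{K}}(K)}\sum_{K'\subseteq K''}w(K')$, and the same for $\sigma$.

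The key computational inputs are, from Lemma~\ref{average_estimates_triadic}(a)--(b): for every $K''\in\text{ch}\ci{\mathbf{K}}(K)$ one has $w(K'')=w(I(J))=\dfrac{w(K)}{3^{k-1}+1}$, so $\sum_{K''\in\text{ch}\ci{\mathbf{K}}(K)}w(K'')=\dfrac{3^{k-1}}{3^{k-1}+1}\,w(K)\sim w(K)$; and likewise $\sigma(K'')\sim_p 3^{-k}\sigma(K)\sim_p \sigma(I(J))$, whence $\sum_{K''\in\text{ch}\ci{\mathbf{K}}(K)}\sigma(K'')\sim_p 3^{-k}\cdot 3^{k-1}\sigma(K)\sim_p \sigma(K)$. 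Actually for $\sigma$ one can even avoid the child-by-child route: by the mass formula in Lemma~\ref{average_estimates_triadic}(a), $\sigma(K)=\sum_{l>i}\sum_{J'\in\mathbf{J}_l,\,J'\subseteq K}\sigma(I(J'))$, and since $\{w>0\}=\bigcup_{J'\in\mathbf{J}}I(J')$ with the $I(J')$ pairwise disjoint, the sum $\sum_{K'\subseteq K}\sigma(K')$ is a weighted count of these same pieces $I(J')$, each $I(J')$ of generation $l$ being counted once for every $\mathbf{K}$-ancestor of it inside $K$, i.e. with multiplicity $l-i$; so $\sum_{K'\subseteq K}\sigma(K')=\sum_{l>i}(l-i)\sum_{J'\in\mathbf{J}_l,J'\subseteq K}\sigma(I(J'))$, and the extra polynomial factor $l-i$ against the geometric decay in $l$ (the ratio $a_{k,p}<1/3$) is absorbed with a $k$-independent constant.

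Concretely I would argue by induction on the generations below $K$: assuming $\sum_{K'\subseteq K''}w(K')\le C\,3^k w(K'')$ for every $K''\in\text{ch}\ci{\mathbf{K}}(K)$ (base case $K$ a maximal-depth interval, where the sum is just $w(K)$), the splitting above gives
\begin{equation*}
\sum_{\substack{K'\in\mathbf{K}\\K'\subseteq K}}w(K')\le w(K)+C\,3^k\!\!\sum_{K''\in\text{ch}\ci{\mathbf{K}}(K)}\!\!w(K'')= w(K)+C\,3^k\cdot\frac{3^{k-1}}{3^{k-1}+1}\,w(K),
\end{equation*}
and since $\dfrac{3^{k-1}}{3^{k-1}+1}=1-\dfrac{1}{3^{k-1}+1}$, one sees the geometric series in the depth closes with a uniform constant once one checks $3^k\cdot\frac{3^{k-1}}{3^{k-1}+1}+3^{-k}\le$ (something)$\cdot 3^k$ at each level — equivalently, running the induction in the normalized quantity $S(K):=3^{-k}w(K)^{-1}\sum_{K'\subseteq K}w(K')$ one gets $S(K)\le 3^{-k}+\frac{3^{k-1}}{3^{k-1}+1}\sup_{K''}S(K'')$, a contraction with ratio $<1$ bounded away from $1$, yielding $S(K)\lesssim 1$; the lower bound $\sum_{K'\subseteq K}w(K')\ge \sum_{K''\in\text{ch}\ci{\mathbf{K}}(K)}w(K'')\sim 3^k w(K)$ is immediate. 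The $\sigma$ statement follows identically (with $p$-dependent constants) from the $\text{ch}\ci{\mathbf{K}}(K)$-estimate $\sum_{K''}\sigma(K'')\sim_p\sigma(K)$, or directly from the multiplicity computation. The only mild obstacle is bookkeeping the $k$-uniformity of the geometric-series constant — i.e. verifying that $\frac{3^{k-1}}{3^{k-1}+1}$ stays uniformly bounded away from $1$, which it does not(!), so one must instead track the \emph{depth-normalized} sum as above, where the additive $3^{-k}$ per level sums to a $k$-independent total; I expect this to be the one place where a little care is genuinely needed rather than routine.
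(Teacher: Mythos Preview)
Your level-by-level/multiplicity approach is a valid route and does yield the result, but it is more labored than the paper's argument, and your write-up has two slips worth flagging. First, the ``induction on generations'' has no base case: $\mathbf{K}=\bigcup_{i\ge 0}\mathbf{K}_i$ has no deepest level, so the induction as stated does not start. What actually works is the direct summation over levels (or, equivalently, the self-similarity fixed-point you gesture at): using that for each $l\ge i$ one has $\sum_{K'\in\mathbf{K}_l,\,K'\subseteq K}w(K')=\bigl(\tfrac{3^{k-1}}{3^{k-1}+1}\bigr)^{l-i}w(K)$, the geometric series sums to exactly $(3^{k-1}+1)\,w(K)\sim 3^k w(K)$, and similarly for $\sigma$ with ratio $a_{k,p}\in(3^{-p},1/3)$. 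Second, your lower-bound line ``$\sum_{K''\in\text{ch}\ci{\mathbf{K}}(K)}w(K'')\sim 3^k w(K)$'' contradicts your own earlier (correct) computation that this sum equals $\tfrac{3^{k-1}}{3^{k-1}+1}\,w(K)\sim w(K)$; one level of children does \emph{not} give the $3^k$ factor, which only appears after summing all levels.

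The paper bypasses all of this with a one-line bijection. By Lemma~\ref{average_estimates_triadic}(b) one has $w(K')\sim 3^k\,w(I((K')^m))$ and $\sigma(K')\sim_p\sigma(I((K')^m))$ for every $K'\in\mathbf{K}$. The map $K'\mapsto I((K')^m)$ is a bijection from $\{K'\in\mathbf{K}:K'\subseteq K\}$ onto the pairwise disjoint family $\{I(J):J\in\mathbf{J},\,J\subseteq K\}$, whose union is exactly $\{w>0\}\cap K$. Hence
\[
\sum_{\substack{K'\in\mathbf{K}\\K'\subseteq K}}w(K')\sim 3^k\sum_{\substack{K'\in\mathbf{K}\\K'\subseteq K}}w(I((K')^m))=3^k\,w(K),
\]
and likewise $\sum_{K'\subseteq K}\sigma(K')\sim_p\sigma(K)$. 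No geometric series, no tracking of constants across levels.
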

\begin{proof}
By Lemma \ref{average_estimates_triadic} we have
\begin{align*}
\sum_{\substack{K'\in\mathbf{K}\\K'\subseteq K}}w(K')\sim\sum_{\substack{K'\in\mathbf{K}\\K'\subseteq K}}3^{k}w(I((K')^m))=3^{k}w(K)
\end{align*}
and
\begin{align*}
\sum_{\substack{K'\in\mathbf{K}\\K'\subseteq K}}\sigma(K')\sim_{p}\sum_{\substack{K'\in\mathbf{K}\\K'\subseteq K}}\sigma(I((K')^m))=\sigma(K).
\end{align*}
\end{proof}

\section{Two-weight estimates for generalized sparse operators}
\label{s: sparse-square-estimates}

In this section we obtain two-weight estimates for sparse $p$-functions and sparse $p'$-functions, with respect to the weights introduced in Section \ref{s: reguera-thiele}. After rescaling these weights, and applying a ``direct sum of singularities'' type argument following the one used by Reguera--Scurry in  \cite{reguera-scurry}, we obtain a proof of Proposition \ref{p: main}.

\subsection{Sparse families} \label{s: sparse definitions}

In this subsection we fix notation and terminology regarding sparse families, following Subsection 2.1 of \cite{convex_body}.

\begin{df} \label{df: sparse} Let $0<\eta<1$. A family $\cS$ of cubes in $\R^d$ is said to be (weakly) \emph{$\eta$-sparse}, if there exists a family $\lbrace E(I):~I\in\cS\rbrace$ of pairwise disjoint measurable subsets of $\R^d$, such that $E(I)\subseteq I$ and $|E(I)|\geq(1-\eta)|I|$, for all $I\in\cS$.
\end{df}

An alternative definition of sparse families is often more useful for the purpose of estimating sparse operators.
 
 \begin{df} \label{df: martingale sparse} Let $\cD$ be a (nonhomogeneous) grid of cubes in $\R^d$, in the sense of \cite{culiuc} and \cite{treil}, that is one can write
\begin{equation*}
\cD=\bigcup_{k\in\Z}\cD_k,
\end{equation*}
where for all $k\in\Z$, $\cD_k$ is an at most countable family of pairwise disjoint cubes in $\R^d$ covering $\R^d$, and for all $k,l\in\Z$ with $k<l$, for all $Q\in\cD_k$ and for all $R\in\cD_l$, there holds either $R\subseteq Q$ or $R\cap Q=\emptyset$. Note that for all $Q,R\in\cD$ we have either $Q\subseteq R$ or $R\subsetneq Q$ or $Q\cap R=\emptyset$. Let $0<\e<1$. A subfamily $\cS$ of $\cD$ is said to be \emph{martingale $\e$-sparse} if
\begin{equation*}
\sum_{Q\in\text{ch}\ci{\cS}(R)}|Q|\leq\e|R|,
\end{equation*}
where $\text{ch}\ci{\cS}(R)$ is the family of all maximal cubes in $\cS$ that are strictly contained in $R$, for all $R\in\cS$.
\end{df}

Note that if $\mathcal{S}$ is a martingale $\e$-sparse family of cubes in $\R^d$, then $\cS$ is $\e$-sparse in the first sense, since one can just define $E(R):=R\setminus\left(\bigcup_{Q\in\text{ch}\ci{\cS}(R)}Q\right)$, for all $R\in\cS$, so in particular for all cubes $L$ in $\R^d$ we have
\begin{equation}
\label{sparse packing}
\sum_{\substack{Q\in\mathcal{S}\\Q\subseteq L}}|Q|\leq\frac{|L|}{1-\e}.
\end{equation}
Although Definition \ref{df: martingale sparse} seems more restrictive than Definition \ref{df: sparse}, as it is explained in \cite{convex_body} estimates for sparse operators over sparse families in the first sense can be reduced to estimates for sparse operators over sparse families in the second sense. For reasons of completeness, we include the details of this reduction in the appendix.

Let now $\mathcal{S}$ be a martingale $\e$-sparse family of cubes in $\R^d$ for some $0<\e<1$. Then, for all $Q,Q'\in\cS$ we have $Q\subseteq Q'$ or $Q'\subsetneq Q$ or $Q\cap Q'=\emptyset$. Moreover, for all $Q,Q'\in\mathcal{S}$ with $Q'\subsetneq Q$ we have $|Q'|\leq\e|Q|$. Therefore, if $\mathcal{S}'\subseteq\cS$ is a chain (i.e. a subfamily of $\cS$ linearly ordered with respect to containment) such that there exists $c>0$ with $|Q|\geq c$, for all $Q\in\cS'$, then we have
\begin{equation}
\label{estimate_chain}
\sum_{Q\in\mathcal{S}'}\frac{1}{|Q|^{p}}\leq\frac{1}{c^{p}}\sum_{n=0}^{\infty}\e^{pn}=\frac{1}{c^{p}(1-\e^{p})}\leq\frac{1}{c^{p}(1-\e)}.
\end{equation}
As we will see below, estimate \eqref{estimate_chain} will be enough to deal with estimates for sparse $p$-functions over sparse families of triadic intervals, but in order to deal with general sparse families, a refined version of estimate \eqref{estimate_chain} will be necessary.
\begin{lm}\label{restricted_sparse_packing}
Let $\mathcal{S}$ be a martingale $\e$-sparse family of cubes in $\R^d$. Assume that all cubes in  $\mathcal{S}$ are contained in a cube $L$. Let $E$ be a measurable subset of $\R^d$. Then, there holds
\begin{equation*}
\sum_{Q\in\mathcal{S}}\left(\frac{|Q\cap E|}{|Q|}\right)^{p+1}|Q|\lesssim_{p}\frac{|L\cap E|}{1-\e}.
\end{equation*}
\end{lm}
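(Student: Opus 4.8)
The plan is to prove Lemma~\ref{restricted_sparse_packing} by exploiting the martingale sparseness of $\cS$ to organize the cubes into generations and sum a geometric-type series, after first handling the "large overlap" and "small overlap" regimes separately via a stopping-time decomposition. First I would observe that it suffices to prove the estimate with the exponent $p+1$ replaced by any fixed exponent $>1$; since we may assume $p\geq 1$ and the function $t\mapsto t^{p+1}$ only helps (the ratios $|Q\cap E|/|Q|$ lie in $[0,1]$), in fact I will prove the stronger-looking but cleaner statement
\begin{equation*}
\sum_{Q\in\cS}\left(\frac{|Q\cap E|}{|Q|}\right)^{2}|Q|\lesssim\frac{|L\cap E|}{1-\e},
\end{equation*}
from which the claimed bound follows immediately since $(|Q\cap E|/|Q|)^{p+1}\leq(|Q\cap E|/|Q|)^{2}$ for $p\geq 1$.

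The key step is a stopping-time argument adapted to the grid $\cD$ containing $\cS$. Starting from $L$, I would select the maximal cubes $Q\in\cS$, $Q\subseteq L$, for which $|Q\cap E|/|Q|\geq \tfrac12\,|L\cap E|/|L|$ (if any), call this collection the first generation of stopping cubes, and iterate inside each selected cube. On each cube $Q$ strictly between consecutive stopping cubes (i.e.\ $Q$ in $\cS$, contained in a stopping cube $S$ of generation $j$ but not contained in any stopping cube of generation $j+1$), we have $|Q\cap E|/|Q| < \tfrac12\,|S'\cap E|/|S'|$ for the parent stopping cube, hmm -- more precisely the right comparison is that on such $Q$ the density $|Q\cap E|/|Q|$ is controlled, up to a factor $2$, by that of its stopping ancestor. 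For the sum over such "in-between" cubes $Q$ contained in a fixed stopping cube $S$, one uses martingale $\e$-sparseness: these $Q$ form, within $S$, a martingale $\e$-sparse family whose cubes have density at most (comparable to) $\alpha_S:=|S\cap E|/|S|$, and by \eqref{sparse packing} their measures sum to at most $|S|/(1-\e)$, so their contribution to $\sum(|Q\cap E|/|Q|)^2|Q|$ is $\lesssim \alpha_S^{2}\,|S|/(1-\e)$. Summing over all stopping cubes $S$, the standard Carleson-type property of stopping cubes — the measures of the generation-$(j+1)$ children of $S$ sum to at most $\tfrac12|S|$ (because on each such child the density exceeds $\tfrac12\alpha_S$ but the total $E$-mass inside $S$ is only $\alpha_S|S|$), hence $\sum_{S}\alpha_S|S|\lesssim |L\cap E|$ — upgrades, together with $\alpha_S\leq 1$, to $\sum_S \alpha_S^2|S|\leq\sum_S\alpha_S|S|\lesssim|L\cap E|$. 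This yields the bound.

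Concretely the steps are: (1) reduce to exponent $2$; (2) fix the grid $\cD\supseteq\cS$ and build the stopping cubes $\{S\}$ relative to doubling the $E$-density, starting from $L$; (3) prove the Carleson packing $\sum_S \alpha_S|S|\lesssim |L\cap E|$ by the standard argument that the stopping children of each $S$ capture at least half of $S$'s measure-density-weighted... more simply, $\sum_{S'\in\mathrm{ch}(S)}\alpha_{S'}|S'|\le |S\cap E|=\alpha_S|S|$ while each $\alpha_{S'}\ge 2\alpha_S$ forces $\sum_{S'\in\mathrm{ch}(S)}|S'|\le\tfrac12|S|$, and then iterate/telescope; (4) for each stopping cube $S$, bound $\sum_{Q}(|Q\cap E|/|Q|)^2|Q|\lesssim \alpha_S^2|S|/(1-\e)$ over the in-between cubes $Q$ using that their densities are $\le 2\alpha_S$ (or handling separately the finitely many $Q$ that are themselves stopping cubes, which are counted with the next generation) and \eqref{sparse packing}; (5) sum over $S$ using $\alpha_S\le 1$ and the Carleson packing.

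The main obstacle I anticipate is bookkeeping at the boundary between generations: a cube $Q\in\cS$ that is a stopping cube should not be double-counted, and one must be careful that the "in-between" family inside $S$ really is martingale $\e$-sparse \emph{as a subfamily of $\cD$} so that \eqref{sparse packing} applies with the same $\e$ — this is true because any subfamily of a martingale $\e$-sparse family is again martingale $\e$-sparse. A secondary subtlety is that a general cube $L$ need not belong to $\cD$; but since all cubes of $\cS$ lie inside $L$ and inside $\cD$, one can replace $L$ by the (at most finitely many, disjoint) minimal cubes of $\cD$ needed to cover the union of $\cS$, or simply run the stopping-time argument from the maximal cubes of $\cS$ directly, comparing their $E$-density to $|L\cap E|/|L|$ at the top level; this costs only an absolute constant. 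Apart from these, the argument is routine once the stopping-time skeleton is in place.
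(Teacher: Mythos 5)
Your strategy is a legitimate alternative to the paper's one-line argument: the paper simply applies the Carleson Embedding Theorem (Lemma \ref{l: carleson}) to $f=1\ci{L\cap E}$ with exponent $p+1$, the Carleson condition being exactly \eqref{sparse packing}; you are in effect re-proving that embedding for this particular indicator function by a stopping-time argument. The reduction to exponent $2$ is fine (here $p>1$, so $p+1>2$ and the densities lie in $[0,1]$), step (4) is sound, subfamilies of martingale $\e$-sparse families are indeed martingale $\e$-sparse, and the handling of a top cube $L\notin\cD$ is correct. (Note the stopping threshold should be $\geq 2\alpha_S$, not $\geq\tfrac12\alpha_S$ as first written; your step (3) makes clear that doubling is what you intend.)

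However, the packing estimate claimed in step (3) is false as stated: with $\alpha_S:=|S\cap E|/|S|$, one does \emph{not} have $\sum_S\alpha_S|S|\lesssim|L\cap E|$ for the density-doubling stopping family. Take $E\cap L$ of measure $2^{-N}|L|$ supported inside a single decreasing chain of grid cubes; then the stopping cubes are essentially $S_j$ with $|S_j|=2^{-j}|L|$ and $\alpha\ci{S_j}=2^{j-N}$ for $j=0,\dots,N$, so $\sum_j\alpha\ci{S_j}|S_j|=(N+1)\,|L\cap E|$, which loses a logarithm. Indeed, your two displayed facts only give, upon iteration, $\sum_{S}\alpha_S|S|\le|L\cap E|$ \emph{per generation}, and summing over generations yields no bound; multiplying the measure decay $\sum_{S\in\text{gen }j}|S|\le2^{-j}|L|$ by $\alpha_S\le1$ only gives $\lesssim|L|$, not $\lesssim|L\cap E|$. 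The estimate you actually need, $\sum_S\alpha_S^2|S|\lesssim|L\cap E|$, is nevertheless true, but must be proved directly: write $\alpha_S^2|S|=\alpha_S\,|S\cap E|$ and note that for each $x$ the stopping cubes containing $x$ form a chain along which the densities at least double, whence $\sum_{S\ni x}\alpha_S\le2$ and
\begin{equation*}
\sum_S\alpha_S^2|S|=\int_{L\cap E}\Big(\sum_{S\ni x}\alpha_S\Big)\,dx\le 2\,|L\cap E|.
\end{equation*}
With this replacement for steps (3) and (5) your argument closes; it is of course shorter to quote Lemma \ref{l: carleson} as the paper does.
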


 In order to prove Lemma \ref{restricted_sparse_packing}, we will use the Carleson Embedding Theorem, in the version stated in \cite[Lemma 5.1]{convex_body}.

\begin{lm}[Carleson Embedding Theorem]\label{l: carleson}
Let $\mu$ be a Radon measure on $\R^d$, and let $\cD$ be a grid of cubes in $\R^d$. Let $\lbrace a\ci{Q}:~Q\in\cD\rbrace$ be a collection of nonnegative real numbers such that
\begin{equation*}
\sum_{\substack{Q\in\cD\\Q\subseteq R}}a\ci{Q}\mu(Q)\leq A\mu(R),~\forall J\in\cD,
\end{equation*}
for some $A>0$. Then, for all measurable functions $f\geq0$ on $\R^d$ and for all $1<p<\infty$, there holds
\begin{equation}
\label{carleson}
\sum_{Q\in\cD}\left(\frac{1}{\mu(Q)}\int_{Q}f(x)d\mu(x)\right)^{p}a\ci{Q}\mu(Q)\leq(p')^{p}A\Vert f\Vert\ci{L^{p}(\mu)}^p.
\end{equation}
\end{lm}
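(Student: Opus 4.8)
The statement to prove is the Carleson Embedding Theorem (Lemma \ref{l: carleson}): under the Carleson-type hypothesis $\sum_{Q\in\cD,\,Q\subseteq R}a\ci{Q}\mu(Q)\leq A\mu(R)$ for all $R\in\cD$, one has the embedding inequality \eqref{carleson}. The plan is to follow the classical dyadic argument based on distributional estimates for the maximal function adapted to the grid $\cD$ and the measure $\mu$, combined with the good-$\lambda$ type control that the Carleson condition provides. First I would reduce to the case where $f$ is bounded with compact support and only finitely many $Q\in\cD$ contribute (so all sums are finite and we may interchange sums and integrals freely), deducing the general case by monotone convergence at the end; replacing $f$ by $f\mathbf{1}\ci{\{f>0\}}$ we may also assume $f\geq 0$, as stated. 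Also, by dividing $a\ci{Q}$ by $A$ we may assume $A=1$.

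\textbf{Main steps.} Introduce the dyadic maximal function $M\ci{\cD,\mu}f(x):=\sup_{Q\in\cD,\,x\in Q}\frac{1}{\mu(Q)}\int_{Q}f\,d\mu$, which maps $L^{p}(\mu)\to L^{p}(\mu)$ with norm at most $p'$ by the usual dyadic Hardy--Littlewood argument (this is the source of the constant $(p')^{p}$). The key step is to bound the left-hand side of \eqref{carleson} by $\int_{0}^{\infty}p\lambda^{p-1}\,\nu(\{x:M\ci{\cD,\mu}f(x)>\lambda\})\,d\lambda$ where $\nu$ is the measure $\nu(Q):=a\ci{Q}\mu(Q)$ distributed appropriately; more precisely, fix a value $\lambda>0$ and let $\{Q_{j}^{\lambda}\}_{j}$ be the maximal cubes in $\cD$ with $\frac{1}{\mu(Q_{j}^{\lambda})}\int_{Q_{j}^{\lambda}}f\,d\mu>\lambda$, so these are pairwise disjoint and $\{M\ci{\cD,\mu}f>\lambda\}=\bigcup_{j}Q_{j}^{\lambda}$. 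For any $Q\in\cD$ with $\frac{1}{\mu(Q)}\int_{Q}f\,d\mu>\lambda$ one has $Q\subseteq Q_{j}^{\lambda}$ for some (unique) $j$, hence by the Carleson hypothesis
\begin{equation*}
\sum_{\substack{Q\in\cD\\ \La f\Ra^{\mu}\ci{Q}>\lambda}}a\ci{Q}\mu(Q)=\sum_{j}\sum_{\substack{Q\subseteq Q_{j}^{\lambda}\\ \La f\Ra^{\mu}\ci{Q}>\lambda}}a\ci{Q}\mu(Q)\leq\sum_{j}\mu(Q_{j}^{\lambda})=\mu(\{M\ci{\cD,\mu}f>\lambda\}),
\end{equation*}
where $\La f\Ra^{\mu}\ci{Q}$ abbreviates $\frac{1}{\mu(Q)}\int_{Q}f\,d\mu$. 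Then, writing $(\La f\Ra^{\mu}\ci{Q})^{p}=\int_{0}^{\La f\Ra^{\mu}\ci{Q}}p\lambda^{p-1}\,d\lambda=\int_{0}^{\infty}p\lambda^{p-1}\mathbf{1}\{\La f\Ra^{\mu}\ci{Q}>\lambda\}\,d\lambda$ and using Tonelli to interchange the sum over $Q$ with the integral in $\lambda$, the left-hand side of \eqref{carleson} is bounded by
\begin{equation*}
\int_{0}^{\infty}p\lambda^{p-1}\Biggl(\sum_{\substack{Q\in\cD\\ \La f\Ra^{\mu}\ci{Q}>\lambda}}a\ci{Q}\mu(Q)\Biggr)d\lambda\leq\int_{0}^{\infty}p\lambda^{p-1}\mu(\{M\ci{\cD,\mu}f>\lambda\})\,d\lambda=\Vert M\ci{\cD,\mu}f\Vert\ci{L^{p}(\mu)}^{p}.
\end{equation*}
Combining with the maximal function bound $\Vert M\ci{\cD,\mu}f\Vert\ci{L^{p}(\mu)}\leq p'\Vert f\Vert\ci{L^{p}(\mu)}$ gives \eqref{carleson} with constant $(p')^{p}$, and removing the reductions by monotone convergence completes the proof.

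\textbf{Main obstacle.} The only genuinely delicate point is establishing the weak-type $(1,1)$ and strong-type $(p,p)$ bounds for the grid maximal function $M\ci{\cD,\mu}$ with the sharp constant $p'$ in a setting where $\cD$ is a possibly nonhomogeneous grid and $\mu$ is merely a Radon measure; one must check that the Calder\'on--Zygmund stopping-time decomposition (selecting maximal cubes above a threshold) goes through with these weak hypotheses and that the Marcinkiewicz-type interpolation yielding the constant $p'$ (rather than a worse $p/(p-1)$-free bound) is valid — this is where keeping track of constants matters, since the statement asserts the explicit factor $(p')^{p}$. Everything else (the finiteness reductions, Tonelli, layer-cake) is routine.
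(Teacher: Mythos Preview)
Your argument is correct and is the classical proof of the dyadic Carleson embedding theorem via the layer-cake formula and the $L^p(\mu)$ bound for the grid maximal function. The paper itself does not supply a proof of this lemma: it simply refers the reader to \cite{treil} and remarks that the sharp constant $(p')^{p}$ is not needed for its purposes, so your proof is essentially the standard one found in such references.
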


A proof of that version of the Carleson Embedding Theorem can be found in \cite{treil}. Let us note that the exact constant $C(p)$ appearing in the right-hand side of \eqref{carleson} is not important for our purposes.

\begin{proof}[Proof (of Lemma \ref{restricted_sparse_packing})]
Applying Lemma \ref{l: carleson} for the function $f:=1\ci{L\cap E}$, for the exponent $p+1$ and with $\mu$ being Lebesgue measure on $\R^d$, and using \eqref{sparse packing}, we obtain
\begin{align*}
\sum_{Q\in\mathcal{S}}\left(\frac{|Q\cap E|}{|Q|}\right)^{p+1}|Q|=\sum_{Q\in\cS}(\La f\Ra\ci{Q})^{p+1}|Q|\lesssim_{p}\frac{\Vert f\Vert\ci{L^{p+1}(\R^d)}^{p+1}}{1-\e}=\frac{|L\cap E|}{1-\e}.
\end{align*}
\end{proof}

\subsection{The main estimates} \label{s: main sparse estimates}

Here we state the main estimates that lead to a proof of Proposition \ref{p: main}. Fix $1<p<\infty$. For all positive integers $k>3000$, we denote by $w_k,\sigma_k$ the weights on $[0,1)$ constructed in Section \ref{s: reguera-thiele} for these $k,p$, following the notation used by Reguera--Scurry \cite{reguera-scurry}.

\begin{prop}\label{p: local_testing_condition}
Let $0<\e<1$, and let $\cS$ be a martingale $\e$-sparse family of subintervals of $[0,1)$. Fix a positive integer $k>3000$. Then, we have
\begin{equation}
\label{testing_conditions}
\sum_{\substack{I\in\mathcal{S}\\I\subseteq L}}(\La w_k\Ra\ci{I})^{p}\La\sigma_k\Ra\ci{I}|I|\lesssim_{p} k\frac{w_k(L)}{1-\e},\qquad \sum_{\substack{I\in\mathcal{S}\\I\subseteq L}}(\La \sigma_k\Ra\ci{I})^{p'}\La w_k\Ra\ci{I}|I|\lesssim_{p} k\frac{\sigma_k(L)}{1-\e},
\end{equation}
for any subinterval $L$ of $[0,1)$.
\end{prop}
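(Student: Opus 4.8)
The plan is to prove the two estimates in \eqref{testing_conditions} separately, exploiting the multiplicative structure of the weights described in Lemma \ref{average_estimates_triadic} and Lemma \ref{average_estimates}. For a fixed subinterval $L$ of $[0,1)$ and a martingale $\e$-sparse family $\cS$, the first step is to organize the intervals $I\in\cS$ with $I\subseteq L$ according to the smallest interval $K\in\mathbf{K}$ containing $I$, writing $J:=K^m$. Lemma \ref{average_estimates} tells us that for each such $I$ the averages $\La w_k\Ra\ci{I},\La\sigma_k\Ra\ci{I}$ are controlled (up to $p$-dependent constants) by the averages over $I(J)$ or over $K$, with an extra gain of $|I\cap I(J)|/|I|$ (or $3^{-k}$) in the $\sigma_k$ average in the case where $I$ meets both $J$ and $I(J)$. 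Since $w_k,\sigma_k$ vanish off $\bigcup_{J\in\mathbf{J}}I(J)$ by Remark \ref{r: vanish}, only intervals $I$ meeting some $I(J)$ contribute, which is what makes the sums controllable.

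First I would handle the easier cases of Lemma \ref{average_estimates}, namely (a) and (b), where the averages of $w_k$ and $\sigma_k$ over $I$ are comparable to those over a fixed reference interval ($I(J)$ or $K$) depending only on which $K\in\mathbf{K}$ is the minimal containing interval. Here the summand $(\La w_k\Ra\ci{I})^p\La\sigma_k\Ra\ci{I}|I|$ is, by the $A_p$-type bound of Remark \ref{A_p}, comparable to $\La w_k\Ra\ci{I}|I|\lesssim w_k^{\mathrm{loc}}$-type quantities; more precisely, using $(\La w_k\Ra\ci{I})^{p-1}\La\sigma_k\Ra\ci{I}\lesssim_p 1$ we get $(\La w_k\Ra\ci{I})^p\La\sigma_k\Ra\ci{I}|I|\lesssim_p \La w_k\Ra\ci{I}|I| = w_k(I)$ — but summing $w_k(I)$ over a sparse family is too lossy, so instead I would keep the reference-interval average fixed per $K$ and sum $|I|$ over the $I\in\cS$ attached to that $K$ using the sparse packing bound \eqref{sparse packing}. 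Grouping over $K\in\mathbf{K}$ with $K\subseteq$ (roughly) $L$, the $|I|$-sum over the relevant $I$ is $\lesssim |K|/(1-\e)$ or $\lesssim |I(J)|/(1-\e)$, and then Lemma \ref{average_estimates_triadic} together with the packing Lemma \ref{packing_K} reassembles $\sum_K (\text{average})^p(\text{average})|K| \lesssim_p \sum_K w_k(K') \lesssim_p 3^k w_k(\cdot)$, which after the careful bookkeeping of the geometric factors $3^{-k}$ in the $\sigma_k$ averages collapses to $\lesssim_p w_k(L)$. The factor $k$ in \eqref{testing_conditions} will come precisely from case (c): there can be roughly $k$ generations of intervals $I$ nested between $K$ and $I(J)$ that straddle both $J$ and $I(J)$ (triadic scales from $|K|=3^k|I(J)|$ down to $|I(J)|$), and for each such scale the gain $|I\cap I(J)|/|I| \le |I(J)|/|I|$ in the $\sigma_k$ average, combined with $|I|$, produces a bounded contribution, so summing over the $\lesssim k$ scales yields the linear-in-$k$ loss.

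The technically delicate step — and the one I expect to be the main obstacle — is case (c) of Lemma \ref{average_estimates} for a \emph{general}, not necessarily triadic, sparse family: the intervals $I\in\cS$ straddling the common endpoint of $J$ and $I(J)$ do not fit the triadic packing estimate \eqref{estimate_chain}, and one must invoke the refined packing Lemma \ref{restricted_sparse_packing} with $E$ taken to be $I(J)$ (or $\bigcup I(J)$). Indeed, writing the summand in case (c) as $(\La w_k\Ra\ci{I})^p\La\sigma_k\Ra\ci{I}|I| \lesssim_p (\La w_k\Ra\ci{I(J)})^p \La\sigma_k\Ra\ci{I(J)} \bigl(\tfrac{|I\cap I(J)|}{|I|}\bigr)|I|$ and using $(\La w_k\Ra\ci{I(J)})^{p-1}\La\sigma_k\Ra\ci{I(J)}\lesssim_p 1$, one is left needing to bound $\sum_I \La w_k\Ra\ci{I(J)}\,|I\cap I(J)|$; the exponent on $|I\cap I(J)|/|I|$ is only $1$, which is not summable by \eqref{sparse packing} alone over a chain, so the $p+1$ power in Lemma \ref{restricted_sparse_packing} is what saves the day after inserting the trivial factor $(|I\cap I(J)|/|I|)^{p} \le 1$. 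Organizing this uniformly over all $K\in\mathbf{K}$ while keeping track of the normalizations $\La w_k\Ra\ci{K}\sim \La w_k\Ra\ci{I(J)}$ and $\La\sigma_k\Ra\ci{K}\sim_p 3^{-k}\La\sigma_k\Ra\ci{I(J)}$, and making sure the final sum telescopes to $w_k(L)$ (resp.\ $\sigma_k(L)$) via Lemma \ref{packing_K}, is where the bulk of the bookkeeping lies. The second estimate in \eqref{testing_conditions}, for $\sigma_k$-sparse $p'$-functions, is entirely symmetric: one swaps the roles of $w_k$ and $\sigma_k$, replaces $p$ by $p'$, and uses the companion $A_{p'}$-type bound $\La\sigma_k\Ra\ci{I}^{p'-1}\La w_k\Ra\ci{I}\lesssim_p 1$ from Remark \ref{A_p}, together with the $\sigma_k$-analogue of Lemma \ref{packing_K}; the same three cases of Lemma \ref{average_estimates} and the same use of Lemma \ref{restricted_sparse_packing} apply verbatim.
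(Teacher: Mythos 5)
Your high-level architecture (decompose via the lattice $\mathbf{K}$, use Lemma \ref{average_estimates} to reduce to reference averages over $K$ or $I(J)$, sparse packing plus Lemma \ref{packing_K} for the ``interior'' intervals, and a chain count of $\lesssim k/(1-\e)$ generations to produce the factor $k$) matches the paper's proof in the case where $L$ is triadic or $L\in\mathbf{K}$. But there are two genuine problems. First, the step you describe as bookkeeping --- ``grouping over $K\subseteq$ (roughly) $L$'' and ``making sure the final sum telescopes to $w_k(L)$'' --- hides the actual crux of the argument for a \emph{general} interval $L$: the intervals $I\in\cS$ whose minimal containing $K\in\mathbf{K}$ is \emph{not} contained in $L$. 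For these, your scheme produces bounds in terms of $w_k(K)$ or $w_k(I(K^m))$, which can be arbitrarily larger than $w_k(L)$ (one can position $L$ so that $w_k(L)=0$ while $w_k(K)>0$), so no telescoping via Lemma \ref{packing_K} can rescue the estimate; the bound must come out as $k\,w_k(L\cap K)$. The paper proves the testing condition directly for intervals $L'$ sharing an endpoint with $K$ (Lemma \ref{testing_contains_endpoint_K_w}) by a recursive descent: if $L'$ meets $J:=K^m$ only inside the last triadic child $K\ci{3^{k-1},J}$ without containing it or $I(J)$, one passes to $K_2:=K\ci{3^{k-1},J}$ and $L_2:=L'\cap K_2$ and iterates, possibly infinitely often (in which case $w_k,\sigma_k$ vanish a.e.\ on $L'$). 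This cascade is an additional idea, not bookkeeping, and it is absent from your plan.

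Second, your proposed use of Lemma \ref{restricted_sparse_packing} in case (c) is logically backwards. You need to bound $\sum_I\La w_k\Ra\ci{I(J)}\frac{|I\cap I(J)|}{|I|}|I|$ and propose to ``insert the trivial factor $(|I\cap I(J)|/|I|)^{p}\leq1$'' so as to invoke the lemma with exponent $p+1$. But since $x\geq x^{p+1}$ for $x\in[0,1]$, controlling $\sum_I(|I\cap I(J)|/|I|)^{p+1}|I|$ says nothing about $\sum_I(|I\cap I(J)|/|I|)\,|I|=\sum_I|I\cap I(J)|$; over a chain of $N$ nested intervals all containing the common endpoint of $J$ and $I(J)$ the latter equals $N|I(J)|$, and the correct mechanism (which you state in your second paragraph and then contradict in your third) is the count $N\lesssim k/(1-\e)$ coming from martingale sparseness --- this, together with the analogous generation count for all $I\subseteq L$ of length $\gtrsim|I(J)|$ combined with the $A_p$ bound of Remark \ref{A_p}, is precisely where the factor $k$ enters. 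Lemma \ref{restricted_sparse_packing} is the right tool only for the intervals on which $w_k,\sigma_k$ vanish off $I(J)$: there $\La w_k\Ra\ci{I}=\frac{|I\cap I(J)|}{|I|}\La w_k\Ra\ci{I(J)}$ and $\La\sigma_k\Ra\ci{I}=\frac{|I\cap I(J)|}{|I|}\La\sigma_k\Ra\ci{I(J)}$ hold as identities, so the summand genuinely carries the power $p+1$ (this is Lemma \ref{testing_intersect_only_I(J)_w}).
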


The proof of Proposition \ref{p: local_testing_condition} is postponed to Subsection \ref{s: proof of local testing}. Note that the estimates in \eqref{testing_conditions} blow up as $k\rightarrow\infty$. To rectify this, we pick some
\begin{equation*}
r\in\left(\max\left(1,\frac{1}{p-1}\right),p'\right),
\end{equation*}
where we recall that $p':=\frac{p}{p-1}$, and we define a rescaled version $\wt{w}_k$ of $w_{k}$ by
\begin{equation*}
\wt{w}_{k}:=k^{-r}w_{k}.
\end{equation*}
Since $pr>r+1$ and $r>1$, it is easy to see that for any martingale $\e$-sparse family of subintervals of $[0,1)$ and any subinterval $L$ of $[0,1)$ there holds
\begin{equation}
\label{rescaled local testing conditions}
\sum_{\substack{I\in\mathcal{S}\\I\subseteq L}}(\La \wt{w}_k\Ra\ci{I})^{p}\La\sigma_k\Ra\ci{I}|I|\lesssim_{p}\frac{\wt{w}_k(L)}{1-\e},\qquad \sum_{\substack{I\in\mathcal{S}\\I\subseteq L}}(\La \sigma_k\Ra\ci{I})^{p'}\La\wt{w}_k\Ra\ci{I}|I|\lesssim_{p}\frac{\sigma_k(L)}{1-\e}.
\end{equation}
These estimates are uniform with respect to $k$. Following Reguera--Scurry \cite{reguera-scurry} we take the ``direct sum of singularities'', defining the weights $\wt{w},\sigma$ on $\R$ given by
\begin{equation*}
\wt{w}(x):=\sum_{k=4000}^{\infty}\wt{w}_k(x-9^k),\qquad\sigma(x):=\sum_{k=4000}^{\infty}\sigma_{k}(x-9^k),\qquad x\in\R.
\end{equation*}
It is not hard to see, and is explained in detail in Subsection \ref{s: direct sum}, that for all martingale $\e$-sparse families $\cS$ of intervals in $\R$ there holds
\begin{equation}
\label{global testing conditions}
\sum_{\substack{I\in\mathcal{S}\\I\subseteq L}}(\La \wt{w}\Ra\ci{I})^{p}\La\sigma\Ra\ci{I}|I|\lesssim_{p}\frac{\wt{w}(L)}{1-\e},\qquad \sum_{\substack{I\in\mathcal{S}\\I\subseteq L}}(\La \sigma\Ra\ci{I})^{p'}\La\wt{w}\Ra\ci{I}|I|\lesssim_{p}\frac{\sigma(L)}{1-\e}.
\end{equation}
The last estimates are referred to as (Sawyer-type) \emph{testing conditions}. To extend them to full bounds for the operators of interest, we will use the following special case of a result due to Culiuc \cite{culiuc}.

\begin{thm}[Culiuc \cite{culiuc}]\label{thm: testing to general}
Let $\cD$ be a grid of cubes in $\R^d$. Let $\lbrace a\ci{Q}\rbrace\ci{Q\in\cD}$ be a family of nonnegative measurable functions on $\R^d$. Let $1<p_1\leq p_2<\infty$. Consider the operator $T$ given by
\begin{equation*}
Tf:=\left(\sum_{Q\in\cD}{(|\La f\Ra\ci{Q}|)}^{p_2}a\ci{Q}1\ci{Q}\right)^{1/p_2},\qquad\forall f\in L^1\ti{loc}(\R^d),
\end{equation*}
and for all $L\in\cD$, consider the localized truncation $T\ci{L}$ of $T$ given by
\begin{equation*}
T\ci{L}f:=\bigg(\sum_{\substack{Q\in\cD\\Q\subseteq L}}{(|\La f\Ra\ci{Q}|)}^{p_2}a\ci{Q}1\ci{Q}\bigg)^{1/p_2},\qquad\forall f\in L^1\ti{loc}(\R^d).
\end{equation*} 
Let $u,v$ be weights on $\R^d$, such that for some $A>0$ there holds
\begin{equation*}
\Vert T\ci{L}(1\ci{L}v)\Vert\ci{L^{p_1}(u)}\leq A\Vert 1\ci{L}\Vert\ci{L^{p_1}(v)},\qquad\forall L\in\cD.
\end{equation*}
Then, there exists a constant $C(p_1)>0$ such that $\Vert T(\fdot v)\Vert\ci{L^{p_1}(v)\rightarrow L^{p_1}(u)}\leq C(p_1)A$.
\end{thm}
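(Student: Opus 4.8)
The plan is to establish the testing characterization by a single corona (stopping‑time) decomposition of $f$ with respect to the measure $v\,dx$, exploiting the hypothesis $p_1\le p_2$ so that \emph{no} parallel corona — and in particular no \emph{dual} testing condition — is needed. That, I expect, is the only substantive point.

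First I would reduce everything to the localized inequality
\begin{equation*}
\Vert T\ci{Q_0}(1\ci{Q_0}fv)\Vert\ci{L^{p_1}(u)}\le C(p_1)A\,\Vert 1\ci{Q_0}f\Vert\ci{L^{p_1}(v)}\qquad\text{for every }Q_0\in\cD,
\end{equation*}
with $C(p_1)$ independent of $Q_0$. Indeed, summing the $p_2$‑th powers $T\ci{Q_0}(1\ci{Q_0}fv)^{p_2}$ over the pairwise disjoint cubes $Q_0\in\cD_k$ and letting $k\to-\infty$ recovers $T(fv)^{p_2}$ monotonically; since the supports $\{Q_0\}\ci{Q_0\in\cD_k}$ are disjoint, raising to the power $p_1/p_2$ and integrating against $u\,dx$ is an identity rather than an inequality, and $\sum_{Q_0\in\cD_k}\Vert 1\ci{Q_0}f\Vert\ci{L^{p_1}(v)}^{p_1}=\Vert f\Vert\ci{L^{p_1}(v)}^{p_1}$. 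I may also assume $f\ge0$, since $|\La fv\Ra\ci{Q}|\le\La|f|v\Ra\ci{Q}$ gives $T(fv)\le T(|f|v)$ pointwise.

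For the localized inequality I would build the stopping family $\mathcal F\subseteq\cD$ topped at $Q_0$: put $Q_0\in\mathcal F$ and, recursively, let the $\mathcal F$‑children of $F$ be the maximal $F'\subsetneq F$ with $\frac{1}{v(F')}\int_{F'}fv>2\,\frac{1}{v(F)}\int_{F}fv$; for $Q\subseteq Q_0$ let $\pi(Q)$ be the minimal cube of $\mathcal F$ containing $Q$. As usual one gets (i) $\frac{1}{v(Q)}\int_Q fv\le2\,\frac{1}{v(F)}\int_F fv$ whenever $\pi(Q)=F$, and (ii) the sets $E(F):=F\setminus\bigcup_{F'\in\mathrm{ch}\ci{\mathcal F}(F)}F'$ are pairwise disjoint with $v(E(F))\ge\tfrac12v(F)$ — the latter from the weak $(1,1)$ bound for the $\cD$‑maximal operator with respect to $v\,dx$, applied inside each $F$ — so that $\sum_{F\in\mathcal F,\,F\subseteq R}v(F)\le2v(R)$ for all $R\in\cD$. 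Writing $\La fv\Ra\ci{Q}=\La v\Ra\ci{Q}\cdot\frac{1}{v(Q)}\int_Q fv$, grouping the cubes $Q\subseteq Q_0$ by their stop $\pi(Q)=F$, applying (i), and then using $\sum_{Q\subseteq F}(\La v\Ra\ci{Q})^{p_2}a\ci{Q}1\ci{Q}=T\ci{F}(1\ci{F}v)^{p_2}$, I obtain the pointwise estimate
\begin{equation*}
T\ci{Q_0}(1\ci{Q_0}fv)^{p_2}\le2^{p_2}\sum_{F\in\mathcal F}\Big(\frac{1}{v(F)}\int_F fv\Big)^{p_2}T\ci{F}(1\ci{F}v)^{p_2}.
\end{equation*}
Now the key move: since $0<p_1/p_2\le1$, the map $t\mapsto t^{p_1/p_2}$ is subadditive, so raising the last inequality to that power collapses the $\ell^{p_2}$‑sum over $\mathcal F$ into an ordinary sum,
\begin{equation*}
T\ci{Q_0}(1\ci{Q_0}fv)^{p_1}\le2^{p_1}\sum_{F\in\mathcal F}\Big(\frac{1}{v(F)}\int_F fv\Big)^{p_1}T\ci{F}(1\ci{F}v)^{p_1}.
\end{equation*}
Integrating against $u\,dx$ and applying the testing hypothesis $\Vert T\ci{F}(1\ci{F}v)\Vert\ci{L^{p_1}(u)}\le A\,v(F)^{1/p_1}$ term by term yields $\Vert T\ci{Q_0}(1\ci{Q_0}fv)\Vert\ci{L^{p_1}(u)}^{p_1}\le(2A)^{p_1}\sum_{F\in\mathcal F}\big(\frac{1}{v(F)}\int_F fv\big)^{p_1}v(F)$, and the Carleson packing from (ii) together with the Carleson Embedding Theorem (Lemma \ref{l: carleson}, with $\mu=v\,dx$) bounds the remaining sum by $2(p_1')^{p_1}\Vert 1\ci{Q_0}f\Vert\ci{L^{p_1}(v)}^{p_1}$. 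This gives the localized inequality, hence the theorem, with a constant depending only on $p_1$.

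I do not expect a serious obstacle. The single genuinely essential observation is that \emph{one} testing condition suffices, and this rests entirely on $p_1\le p_2$: the concavity of $t\mapsto t^{p_1/p_2}$ is exactly what lets one split the $\ell^{p_2}$‑sum over the corona and thereby avoid the parallel‑corona construction (and the dual testing condition) that the genuinely linear case $p_2=1$ of positive dyadic operators demands. The remaining ingredients — the reduction to a single top cube via the disjointness of $\cD_k$, the weak $(1,1)$ bound for the $v$‑weighted dyadic maximal operator, and the bookkeeping of constants — are routine; the one thing to watch is that the exponent $p_1/p_2\le1$ is handled correctly everywhere a power of a sum appears.
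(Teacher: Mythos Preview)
The paper does not give a proof of this theorem: it is stated as a special case of a result of Culiuc and simply cited to \cite{culiuc}. So there is no in-paper argument to compare against.

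Your proof is correct and is, in fact, the expected one-corona argument for this class of ``square-function type'' operators. The crucial observation --- that for $p_1\le p_2$ the concavity of $t\mapsto t^{p_1/p_2}$ lets you pull the $p_1/p_2$-th power inside the sum over stopping cubes, thereby eliminating the need for a parallel corona or any dual testing condition --- is exactly the point of the restriction $p_1\le p_2$ in the statement, and you have identified and used it cleanly. The remaining steps (reduction to a fixed top cube via disjointness of each generation $\cD_k$, the $v$-weighted dyadic maximal weak $(1,1)$ inequality yielding the Carleson packing $\sum_{F\subseteq R}v(F)\le 2v(R)$, and the Carleson Embedding Theorem in the form of Lemma~\ref{l: carleson} with $\mu=v\,dx$) are standard and are handled correctly. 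Two small technical points worth noting but not undermining the argument: first, the identity $\langle fv\rangle\ci{Q}=\langle v\rangle\ci{Q}\cdot v(Q)^{-1}\int_Q fv$ tacitly assumes $v(Q)>0$, but cubes with $v(Q)=0$ contribute nothing and can be discarded; second, the monotone-limit reduction over $\cD_k$ as $k\to-\infty$ uses that every $Q\in\cD$ is contained in some member of $\cD_k$ for each $k$ below its own level, which is immediate from the grid axioms stated in Definition~\ref{df: martingale sparse}.
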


In view of Theorem \ref{thm: testing to general}, the estimates in \eqref{global testing conditions} imply that
\begin{equation*}
\Vert \mathcal{A}\ci{\cS,p}(\fdot\wt{w})\Vert\ci{L^{p}(\wt{w})\rightarrow L^{p}(\sigma)}\lesssim_{p}\frac{1}{(1-\e)^{1/p}},\qquad\Vert \mathcal{A}\ci{\cS,p'}(\fdot\sigma)\Vert\ci{L^{p'}(\sigma)\rightarrow L^{p'}(\wt{w})}\lesssim_{p}\frac{1}{(1-\e)^{1/p'}},
\end{equation*}
for any martingale $\e$-sparse family $\cS$ of intervals in $\R$. The reduction from general sparse families to martingale sparse families described in the Appendix allows then to conclude that for any $0<\eta<1$ and for any $\eta$-sparse family $\cS$ of intervals in $\R$ there holds
\begin{equation*}
\Vert \mathcal{A}\ci{\cS,p}(\fdot\wt{w})\Vert\ci{L^{p}(\wt{w})\rightarrow L^{p}(\sigma)},~\Vert \mathcal{A}\ci{\cS,p'}(\fdot\sigma)\Vert\ci{L^{p'}(\sigma)\rightarrow L^{p'}(\wt{w})}\leq C=C(\eta,p).
\end{equation*}

Recall also estimate \eqref{rescaled norm Hilbert transform estimate}
\begin{equation*}
 \Vert H(1\ci{[0,1)}\wt{w}_{k})\Vert\ci{L^{p}(\sigma_{k})}\gtrsim k^{1-(r/p')}\Vert 1\ci{[0,1)}\Vert\ci{L^{p}(\wt{w}_k)},\qquad\forall k>3000.
\end{equation*}
Coupled with translation invariance, it yields
\begin{align*}
\Vert H(1\ci{I_{k}}\wt{w})\Vert\ci{L^p(\sigma)}\gtrsim k^{1-(r/p')}\Vert 1\ci{I_{k}}\Vert\ci{L^p(\wt{w})},
\end{align*}
where $I_k:=[9^k,9^k+1)$, for all $k\geq 4000$. Since $1-(r/p')>0$, we deduce
\begin{equation*}
\Vert H(\fdot\wt{w})\Vert\ci{L^p(\wt{w})\rightarrow L^p(\sigma)}=\infty.
\end{equation*}
An application of the closed graph theorem, coupled with the facts that $\wt{w}\in L^1(\R)$, so $L^{p}(\wt{w})\subseteq L^1(\wt{w})$, and that the linear operator $H:L^{1}(\R)\rightarrow L^{1,\infty}(\R)$ is bounded, implies that there exists $f\in L^{p}(\wt{w})$ with $H(f\wt{w})\notin L^{p}(\sigma)$.

\subsection{Verification of local testing conditions}\label{s: proof of local testing}

The proof of Proposition \ref{p: local_testing_condition} will be accomplished in several steps.  We fix a positive integer $k>3000$, $0<\e<1$, and an $\e$-martingale sparse family $\cS$ of subintervals of $[0,1)$. To simplify the notation, we denote $w_k,\sigma_k$ by $w,\sigma$ respectively. Note that by applying the Monotone Convergence Theorem (for series) we can without loss of generality assume that the sparse family $\cS$ is finite (as long as no estimate depends on cardinality), and we will be doing so in the sequel. By interval we mean a subset of $\R$ of the form $[a,b)$, where $a,b\in\R$, $a<b$.

\subsubsection{Triadic case} Here we give a simpler proof of Proposition \ref{p: local_testing_condition} for the case that all intervals are triadic. In this case, it is actually possible to prove a better estimate. The triadic case already shows some of the main difficulties that will arise in the general case and what strategy one should follow to deal with them.

\begin{prop}\label{local_testing_triadic}
Assume that the sparse family $\mathcal{S}$ consists of triadic intervals. Let $L$ by any triadic subinterval of $[0,1)$. Then
\begin{equation}
\label{testing_conditions_triadic}
\sum_{\substack{I\in\mathcal{S}\\I\subseteq L}}(\La w\Ra\ci{I})^{p}\La\sigma\Ra\ci{I}|I|\lesssim_{p} \frac{w(L)}{1-\e},\qquad \sum_{\substack{I\in\mathcal{S}\\I\subseteq L}}(\La \sigma\Ra\ci{I})^{p'}\La w\Ra\ci{I}|I|\lesssim_{p} \frac{\sigma(L)}{1-\e}.
\end{equation}
\end{prop}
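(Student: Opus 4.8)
I plan to establish the first inequality in \eqref{testing_conditions_triadic}; the second follows by the same argument with the roles of $w,\sigma$ and of $p,p'$ interchanged (this is legitimate, since $w=\sigma^{1-p'}1\ci{\{w>0\}}$ and Lemmas \ref{average_estimates_triadic}--\ref{packing_K} are stated symmetrically). Write $w=w_k$, $\sigma=\sigma_k$. One may assume $\mathcal{S}$ finite; and if $L$ is contained in some $I(J)$, then $w$ is constant on $L$ and the bound reduces at once to the sparse packing \eqref{sparse packing}, so I assume $L$ lies in no $I(J)$, whence $\sum_{J:\,I(J)\subseteq L}w(I(J))=w(L)$ because the $I(J)$'s are pairwise disjoint with union $\{w>0\}$. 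The plan is to sort every $I\in\mathcal{S}$ on which $w$ does not vanish (the remaining $I$ contribute $0$) according to its minimal containing interval $K=K(I)\in\mathbf{K}$: since by Remark \ref{r: vanish} the only part of $\operatorname{supp} w$ inside $K\setminus K^m$ is $I(K^m)$, the triadic structure forces exactly one of \textbf{(i)} $I$ lies in the outer third of $K$ that meets $I(K^m)$, so that $\operatorname{supp} w\cap I\subseteq I(K^m)$ and $I$ is either contained in $I(K^m)$ or is one of the triadic superintervals of $I(K^m)$ inside that third; \textbf{(ii)} $I=K\in\mathbf{K}$; \textbf{(iii)} $I\subsetneq K^m$, and then $I$ is contained in no interval of $\mathbf{K}$ strictly smaller than $K$.

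For the shallow class (i): by Lemma \ref{average_estimates_triadic} the weights $w$ and $\sigma=w^{1-p}$ are constant on $I(J)$ (with $J=K^m$), so $(\La w\Ra\ci{I})^{p}\La\sigma\Ra\ci{I}|I|=w(I)$ when $I\subseteq I(J)$, and $(\La w\Ra\ci{I})^{p}\La\sigma\Ra\ci{I}|I|=3^{-p\ell}\,w(I(J))$ when $I\supsetneq I(J)$, where $3^{\ell}=|I|/|I(J)|\ge 3$. For a fixed $J$: the intervals $I\subseteq I(J)$ contribute at most $\frac{w(I(J))}{1-\e}$ by \eqref{sparse packing} (as $w$ is constant there), while the intervals $I\supsetneq I(J)$ form a single chain and contribute $\le w(I(J))\sum_{\ell\ge 1}3^{-p\ell}\lesssim_p w(I(J))$. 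Summing over $J$ with $I(J)\subseteq L$ and using $\sum_{J:\,I(J)\subseteq L}w(I(J))=w(L)$ bounds the class-(i) contribution by $\lesssim_p\frac{w(L)}{1-\e}$.

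For the deep classes (ii) and (iii): from the identity $(\La w\Ra\ci{K})^{p-1}\La\sigma\Ra\ci{K}=c_{k,p}3^{-k}$ (Lemma \ref{average_estimates_triadic}), directly for class (ii) and together with Lemma \ref{average_estimates}(b) for class (iii), one gets $(\La w\Ra\ci{I})^{p}\La\sigma\Ra\ci{I}|I|\lesssim_p 3^{-k}\La w\Ra\ci{K(I)}|I|$ in both classes. Summing over the $I\in\mathcal{S}$ sharing a given value $K$ of $K(I)$ by the sparse packing \eqref{sparse packing} yields $\lesssim_p\frac{3^{-k}w(K)}{1-\e}$ per $K\in\mathbf{K}$; summing over the relevant $K$ --- all of which lie in $L$ except, possibly, the single minimal $\mathbf{K}$-interval $K^{\ast}$ containing $L$, whose contribution is controlled separately since $3^{-k}w(K^{\ast})\lesssim w(L)$ once $L$ meets $(K^{\ast})^m$ --- and invoking $\sum_{K\in\mathbf{K},\,K\subseteq L}w(K)\lesssim 3^{k}w(L)$ (Lemma \ref{packing_K}, applied to the disjoint maximal $\mathbf{K}$-subintervals of $L$) cancels the factors $3^{-k}$ and $3^{k}$, bounding the class-(ii)/(iii) contribution by $\lesssim_p\frac{w(L)}{1-\e}$ as well. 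Adding the two contributions completes the proof of \eqref{testing_conditions_triadic}.

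I expect the heart of the matter to be precisely this shallow/deep split. The crude two-weight bound \eqref{A_p-estimate} holds on $[0,1)$ with constant $O(1)$ and would only give $\sum_{I\in\mathcal{S},\,I\subseteq L}(\La w\Ra\ci{I})^{p}\La\sigma\Ra\ci{I}|I|\lesssim_p\sum_{I\in\mathcal{S},\,I\subseteq L}w(I)$, which is too lossy --- by a factor $3^{k}$ in view of Lemma \ref{packing_K}, and even the factor $k$ tolerated in Proposition \ref{p: local_testing_condition} would be lost. The resolution is that on the deep intervals the two-weight functional $(\La w\Ra\ci{I})^{p-1}\La\sigma\Ra\ci{I}$ is in fact of size $\sim 3^{-k}$, exactly compensating the growth of $\sum_{K\subseteq L}w(K)$, whereas on the shallow intervals, where it is genuinely $O(1)$, the $I(J)$'s are pairwise disjoint so no growth occurs --- provided one keeps the geometric factor $3^{-p\ell}$ in the shallow chains above each $I(J)$ rather than merely counting the intervals in them.
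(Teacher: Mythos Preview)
Your argument is correct and follows essentially the same route as the paper: the paper's Lemma~\ref{testing_condition_K_triadic} carries out the identical shallow/deep split (its families $\mathcal{S}_K^1,\mathcal{S}_K^2$ and the ``$I\subseteq I((K')^m)$'' piece correspond precisely to your classes (ii)+(iii), the chain part of (i), and the contained part of (i)), and then the paper reduces general triadic $L$ to $L\in\mathbf{K}$, whereas you sort directly by $K(I)$ and treat the boundary case $K=K^{\ast}\supsetneq L$ at the end---an equivalent organization. One cosmetic omission: your trichotomy writes $I\subsetneq K^m$ in class (iii) and so misses $I=K^m$ itself, but the class-(iii) estimate via Lemma~\ref{average_estimates_triadic}(b) covers $I=K^m$ verbatim.
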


The following lemma establishes Proposition \ref{local_testing_triadic} in an important special case.

\begin{lm}\label{testing_condition_K_triadic}
Assume that the sparse family $\mathcal{S}$ consists of triadic intervals. Let $K\in\mathbf{K}$. Set $J:=K^{m}$. Let $S(J)$ be the triadic child of $K$ containing $I(J)$. Set
\begin{equation*}
\mathcal{S}\ci{K}^{1}:=\lbrace I\in\mathcal{S}:~I=K,\text{ or }I\subseteq J\text{ and }I\text{ is not contained in any } K'\in\textup{ch}\ci{\mathbf{K}}(K)\rbrace,
\end{equation*}
\begin{equation*}
\cS\ci{K}^2:=\lbrace I\in\cS:~ I(J)\subsetneq I\subseteq S(J)\rbrace.
\end{equation*}

(a) There holds
\begin{equation*}
\sum_{I\in\mathcal{S}\ci{K}^m}(\La w\Ra\ci{I})^{p}\La \sigma\Ra\ci{I}|I|\lesssim_{p}\frac{3^{-k}w(K)}{1-\e},\qquad\sum_{I\in\mathcal{S}\ci{K}^m}(\La \sigma\Ra\ci{I})^{p'}\La w\Ra\ci{I}|I|\lesssim_{p}\frac{\sigma(K)}{1-\e},\qquad m=1,2.
\end{equation*}

(b) The testing conditions in \eqref{testing_conditions_triadic} hold in the case $L=K$.
\end{lm}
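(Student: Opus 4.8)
The plan is to prove part (a) first, handling the two families $\cS_K^1$ and $\cS_K^2$ separately, and then deduce part (b) by decomposing the sum over $\{I\in\cS:~I\subseteq K\}$ into pieces indexed by the intervals in $\textup{ch}\ci{\mathbf{K}}(K)$ together with the ``top'' family $\cS_K^1\cup\cS_K^2$, followed by induction on the ``depth'' in $\mathbf{K}$.

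For part (a), the key point is that on each of the two families the weights $w,\sigma$ have essentially constant averages, by Lemma \ref{average_estimates_triadic}(b) (resp.\ Lemma \ref{comparison_contain_endpoint}): for $I\in\cS_K^1$ one has $\La w\Ra\ci{I}\sim\La w\Ra\ci{K}$ and $\La\sigma\Ra\ci{I}\sim_p 3^{-k}\La\sigma\Ra\ci{I(J)}\sim_p\La\sigma\Ra\ci{K}$ (the intervals in $\cS_K^1$ are triadic subintervals of $J$ not contained in any child in $\textup{ch}\ci{\mathbf{K}}(K)$, hence exactly the intervals $L$ in Lemma \ref{average_estimates_triadic}(b)), while for $I\in\cS_K^2$ the interval $I$ shares the common endpoint of $J$ and $I(J)$ with $S(J)\in\textup{ch}\ci{\mathbf{K}}(K)$, so by Lemma \ref{comparison_contain_endpoint} we again get $\La w\Ra\ci{I}\lesssim\La w\Ra\ci{S(J)}=\La w\Ra\ci{I(J)}\sim\La w\Ra\ci{K}$ and similarly for $\sigma$. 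Once the averages are pulled out as constants, the remaining sum is just $\sum_{I\in\cS_K^m}|I|$, which by the martingale sparseness packing estimate \eqref{sparse packing} is $\lesssim\frac{|K|}{1-\e}$ (in case $m=1$ the intervals $I\neq K$ lie inside $J$, and in case $m=2$ inside $S(J)$, so one can even use $\frac{|J|}{1-\e}$ or $\frac{|S(J)|}{1-\e}$). Then one combines with the average/measure identities from Lemma \ref{average_estimates_triadic}: $(\La w\Ra\ci{K})^p\La\sigma\Ra\ci{K}|K|\sim_p (\La w\Ra\ci{K})^{p-1}\cdot c_{k,p}3^{-k}(\La w\Ra\ci{K})^{-(p-1)}\cdot w(K)\sim_p 3^{-k}w(K)$, using $\sigma\sim_p w^{1-p}$ on $\{w>0\}$, and symmetrically $(\La\sigma\Ra\ci{K})^{p'}\La w\Ra\ci{K}|K|\sim_p\sigma(K)$. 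This yields the two displayed bounds in (a).

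For part (b), write
\begin{equation*}
\sum_{\substack{I\in\cS\\I\subseteq K}}(\La w\Ra\ci{I})^{p}\La\sigma\Ra\ci{I}|I|=\sum_{I\in\cS_K^1\cup\cS_K^2}(\cdots)+\sum_{K'\in\textup{ch}\ci{\mathbf{K}}(K)}\ \sum_{\substack{I\in\cS\\I\subseteq K'}}(\cdots),
\end{equation*}
noting that every $I\in\cS$ with $I\subseteq K$ either is $K$ itself, or lies in $K\setminus(J\cup I(J))$ (where $w,\sigma$ vanish, by Remark \ref{r: vanish}, contributing nothing), or is a triadic subinterval of $J$ — and such an $I$ is either contained in some $K'\in\textup{ch}\ci{\mathbf{K}}(K)$, or not, in which case it belongs to $\cS_K^1$; the family $\cS_K^2$ is absorbed into the $K'=S(J)$ term or treated together with $\cS_K^1$ (it is a subfamily of $\{I:I\subseteq S(J)\}$, so one should be slightly careful not to double-count — cleanest is to let the inner sum over $K'$ run over $\cS$-intervals \emph{strictly} inside $K'$ and put the maximal ones into the top family). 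Applying part (a) to the top piece and the induction hypothesis (Proposition \ref{local_testing_triadic} with $L=K'$, which is legitimate since $K'$ sits one level deeper in $\mathbf{K}$ and the family is finite) to each child, the child contributions are bounded by $\sum_{K'}\frac{w(K')}{1-\e}$, which by Lemma \ref{average_estimates_triadic}(b) is $\sim 3^{-k}\cdot 3^{k}\cdot\frac{w(K)}{1-\e}\sim\frac{w(K)}{1-\e}$ (there are $\#\textup{ch}\ci{\mathbf{K}}(K)=3^{k-1}$ children each with $w(K')\sim 3^{-k}w(K)$), and the top piece is $\lesssim_p\frac{3^{-k}w(K)}{1-\e}\leq\frac{w(K)}{1-\e}$ by part (a); summing gives $\lesssim_p\frac{w(K)}{1-\e}$, which is the first estimate in \eqref{testing_conditions_triadic} for $L=K$. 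The second estimate is entirely symmetric, using $\sigma(K')\sim_p 3^{-k}\sigma(K)$ and $\#\textup{ch}\ci{\mathbf{K}}(K)=3^{k-1}$, so the child contributions again sum to $\lesssim_p\frac{\sigma(K)}{1-\e}$.

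The main obstacle I expect is bookkeeping rather than analysis: making the decomposition in (b) genuinely disjoint (each $\cS$-interval counted exactly once) while ensuring the top family is controlled by part (a) and the residual families are genuinely subfamilies living strictly below the children — in particular handling $\cS_K^2$ (intervals strictly between $I(J)$ and $S(J)$) without overlap with the $S(J)$-child term — and checking that the induction is well-founded given that $\mathbf{K}$ is infinite but $\cS$ is finite (so only finitely many levels of $\mathbf{K}$ are relevant, and one inducts downward from the deepest level on which $\cS$ has an interval). The factor-counting $3^{k-1}\cdot 3^{-k}\sim 3^{-1}=O(1)$ is exactly what makes the bound $k$-independent at this stage — note this is the \emph{triadic} proposition where no factor of $k$ appears; the general case will lose the factor of $k$ elsewhere.
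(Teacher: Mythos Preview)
Your treatment of $\cS_K^2$ in part (a) has a genuine gap that loses exactly the factor you need. You claim that for $I\in\cS_K^2$ the averages satisfy $\langle\sigma\rangle_I\sim\langle\sigma\rangle_K$, but this is false: since $w,\sigma$ vanish on $S(J)\setminus I(J)$, one has $\sigma(I)=\sigma(I(J))$ for every $I\in\cS_K^2$, so $\langle\sigma\rangle_I=\frac{|I(J)|}{|I|}\langle\sigma\rangle_{I(J)}$, which for the smallest $I$ (of length $3|I(J)|$) is $\sim\langle\sigma\rangle_{I(J)}\sim_p 3^k\langle\sigma\rangle_K$, not $\sim\langle\sigma\rangle_K$. (Also, $S(J)\notin\textup{ch}_{\mathbf{K}}(K)$ --- the $\mathbf{K}$-children of $K$ are subintervals of $J$ --- and Lemma~\ref{comparison_contain_endpoint} does not apply since $I$ shares with $S(J)$ the endpoint adjacent to $J$, not an endpoint of $K$.) If you simply bound both averages by $\langle w\rangle_{I(J)},\langle\sigma\rangle_{I(J)}$ and use sparse packing over $S(J)$, you get $\langle w\rangle_{I(J)}\cdot|S(J)|/(1-\e)\sim w(K)/(1-\e)$, missing the $3^{-k}$. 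That factor is not cosmetic: in your inductive proof of (b) the recursion reads $T(K)\leq A_p\cdot(\text{top})+C_p\sum_{K'}w(K')/(1-\e)$ with $\sum_{K'}w(K')=w(J)=\frac{3^{k-1}}{3^{k-1}+1}w(K)$, so the induction closes with a $k$-independent $C_p$ only if the top is $\lesssim 3^{-k}w(K)/(1-\e)$.

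The paper recovers the $3^{-k}$ for $\cS_K^2$ by exploiting that the intervals in $\cS_K^2$ form a \emph{chain} (they all contain $I(J)$), writing
\[
(\langle w\rangle_I)^p\langle\sigma\rangle_I|I|=\Bigl(\tfrac{|I(J)|}{|I|}\Bigr)^{p+1}(\langle w\rangle_{I(J)})^p\langle\sigma\rangle_{I(J)}\,|I|=\langle w\rangle_{I(J)}\,\frac{|I(J)|^{p+1}}{|I|^{p}},
\]
and summing $\sum_{I\in\cS_K^2}|I|^{-p}\leq |I(J)|^{-p}/(1-\e)$ via the chain estimate~\eqref{estimate_chain}; this gives $w(I(J))/(1-\e)\sim 3^{-k}w(K)/(1-\e)$ as required. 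A secondary bookkeeping point: your decomposition in (b) omits the intervals $I\in\cS$ with $I\subseteq I(J)$, which lie in none of $\cS_K^1$, $\cS_K^2$, or $\{I\subseteq K'\}$ for $K'\in\textup{ch}_{\mathbf{K}}(K)$; these are trivial to handle ($w,\sigma$ constant on $I(J)$, so sparse packing gives $\lesssim w(I(J))/(1-\e)$) but must be included. With these two fixes your inductive scheme for (b) is a legitimate alternative to the paper's direct summation over all $K'\in\mathbf{K}$ with $K'\subseteq K$ via Lemma~\ref{packing_K}.
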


\begin{proof}
(a) By Lemma \ref{average_estimates_triadic} we have
\begin{align*}
\sum_{I\in\mathcal{S}^{1}\ci{K}}(\La w\Ra\ci{I})^{p}\La \sigma\Ra\ci{I}|I|\sim_{p}3^{-k}\sum_{I\in\mathcal{S}^{1}\ci{K}}\La w\Ra\ci{K}|I|\leq\frac{3^{-k}\La w\Ra\ci{K}|J|}{1-\e}=\frac{3^{-k-1}w(K)}{1-\e}.
\end{align*}
Moreover, all intervals in $\cS^2\ci{K}$ contain $|I(J)|$, thus by \eqref{estimate_chain} we deduce
\begin{align*}
\sum_{I\in\cS^2\ci{K}}(\La w\Ra\ci{I})^{p}\La \sigma\Ra\ci{I}|I|&=\sum_{I\in\cS\ci{K}^2}\left(\frac{|I(J)|}{|I|}\right)^{p+1}(\La w\Ra\ci{I(J)})^{p}\La \sigma\Ra\ci{I(J)}|I|\leq\frac{\La w\Ra\ci{I(J)}|I(J)|}{1-\e}\\&
\sim\frac{\La w\Ra\ci{K}3^{-k}|K|}{1-\e}=
\frac{3^{-k}w(K)}{1-\e}.
\end{align*}
The second estimate is proved similarly, recalling that $\La\sigma\Ra\ci{I(J)}\sim_{p}3^{k}\La \sigma\Ra\ci{K}$.

(b) Clearly
\begin{align*}
\sum_{\substack{I\in\mathcal{S}\\I\subseteq K}}(\La w\Ra\ci{I})^{p}\La \sigma\Ra\ci{I}|I|&=\sum_{\substack{K'\in\mathbf{K}\\K'\subseteq K}}\bigg(\sum_{\substack{I\in\cS\\I\subseteq I((K')^m)}}(\La w\Ra\ci{I})^{p}\La \sigma\Ra\ci{I}|I|+\sum_{m=1}^{2}\sum_{I\in\mathcal{S}\ci{K'}^{m}}(\La w\Ra\ci{I})^{p}\La \sigma\Ra\ci{I}|I|\bigg)
\end{align*}

By (a) and Lemma \ref{packing_K} we deduce
\begin{align*}
\sum_{\substack{K'\in\mathbf{K}\\K'\subseteq K}}\sum_{m=1}^{2}\sum_{I\in\mathcal{S}\ci{K'}^{m}}(\La w\Ra\ci{I})^{p}\La \sigma\Ra\ci{I}|I|\lesssim_{p}\frac{3^{-k}}{1-\e}\sum_{\substack{K'\in\mathbf{K}\\K'\subseteq K}}w(K')\sim\frac{w(K)}{1-\e}.
\end{align*}
Moreover, by Lemma \ref{average_estimates_triadic} and \eqref{sparse packing} we have
\begin{equation*}
\sum_{\substack{K'\in\mathbf{K}\\K'\subseteq K}}\sum_{\substack{I\in\cS\\I\subseteq I((K')^m)}}(\La w\Ra\ci{I})^{p}\La \sigma\Ra\ci{I}|I|\leq
\frac{1}{1-\e}\sum_{\substack{K'\in\mathbf{K}\\K'\subseteq K}}w(I((K')^m))=\frac{w(K)}{1-\e},
\end{equation*}
concluding the proof of the first estimate. The second estimate is proved similarly.
\end{proof}

We now prove Proposition \ref{local_testing_triadic}.

\begin{proof}[Proof (of Proposition \ref{local_testing_triadic})]
Let $K$ be the smallest interval in $\mathbf{K}$ containing $L$. If $L=K$, then we are done by Lemma \ref{testing_condition_K_triadic}. Assume now that $L\neq K$. Set $J:=K^{m}$. Since $w,\sigma$ are constant on $I(J)$, by \eqref{sparse packing} we have
\begin{equation*}
\sum_{\substack{I\in\cS\\I\subseteq L\cap I(J)}}(\La w\Ra\ci{I})^{p}\La\sigma\Ra\ci{I}|I|\leq\frac{\La w\Ra\ci{I(J)}|L\cap I(J)|}{1-\e}=\frac{w(L\cap I(J))}{1-\e}\leq\frac{w(L)}{1-\e}.
\end{equation*}
If there are intervals in $\cS$ that are contained in $L$ and strictly contain $I(J)$, then $I(J)\subseteq L$, and these intervals have length greater than or equal to $I(J)$, form a chain and do not intersect $J$, therefore by \eqref{estimate_chain} we obtain
\begin{align*}
\sum_{\substack{I\in\cS\\ I(J)\subsetneq I\subseteq L}}(\La w\Ra\ci{I})^{p}\La \sigma\Ra\ci{I}|I|
&=\sum_{\substack{I\in\cS\\ I(J)\subsetneq I\subseteq L}}\left(\frac{|I(J)|}{|I|}\right)^{p+1}(\La w\Ra\ci{I(J)})^{p}\La \sigma\Ra\ci{I(J)}|I|\leq\frac{w(I(J))}{1-\e}\leq\frac{w(L)}{1-\e}.
\end{align*}
Finally, assume $L\cap J\neq\emptyset$. Then, by Lemma \ref{average_estimates_triadic} we have $\La w\Ra\ci{L}\sim\La w\Ra\ci{K}$ and $\La \sigma\Ra\ci{L}\sim_{p}\La\sigma\Ra\ci{K}$. Let $\wt{A}\ci{J}$ be the family of all triadic subintervals of $J$ of length $3^{1-k}|J|$ contained in $L$. Then, by Lemma \ref{average_estimates_triadic} and \eqref{sparse packing} we have
\begin{align*}
\sum_{\substack{I\in\mathcal{S}\\I\subseteq L\cap J\\I\nsubseteq K',~\forall K'\in\wt{A}\ci{J}}}(\La w\Ra\ci{I})^{p}\La \sigma\Ra\ci{I}|I|\lesssim_{p}
\sum_{\substack{I\in\mathcal{S}\\I\subseteq L\cap J\\I\nsubseteq K',~\forall K'\in\wt{A}\ci{J}}}\La w\Ra\ci{L}|I|\lesssim_{p}\La w\Ra\ci{L}\frac{|L|}{1-\e}=\frac{w(L)}{1-\e}.
\end{align*}
Moreover, by Lemma \ref{testing_condition_K_triadic} we have
\begin{align*}
\sum_{K'\in\wt{A}\ci{J}}\sum_{\substack{I\in\mathcal{S}\\I\subseteq K'}}(\La w\Ra\ci{I})^{p}\La \sigma\Ra\ci{I}|I|\lesssim_{p}\frac{1}{1-\e}\sum_{K'\in\wt{A}\ci{J}}w(K')=\frac{w(L\cap J)}{1-\e}\leq\frac{w(L)}{1-\e},
\end{align*}
concluding the proof of the first estimate. The second one is proved similarly.
\end{proof}

\subsubsection{General case} Here we prove Proposition \ref{p: local_testing_condition} in the general case. The main strategy will be the same with the one in the triadic case.

We begin by establishing restricted versions of the testing conditions in  \eqref{testing_conditions}, for general intervals $L$.

\begin{lm}\label{testing_intersect_only_I(J)_w}
Let $L$ be any subinterval of $[0,1)$, and let $K\in\mathbf{K}$ with $L\subseteq K$. Set $J:=K^{m}$. Let $\cS'$ be the family of all intervals $I$ in $\cS$ contained in $L$ such that $w,\sigma$ vanish on $I\cap J$. Then, there holds
\begin{equation*}
\sum_{I\in\mathcal{S}'}(\La w\Ra\ci{I})^{p}\La\sigma\Ra\ci{I}|I|\lesssim_{p} \frac{w(L)}{1-\e},\qquad\sum_{I\in\mathcal{S}'}(\La \sigma\Ra\ci{I})^{p'}\La w\Ra\ci{I}|I|\lesssim_{p} \frac{\sigma(L)}{1-\e}.
\end{equation*}
\end{lm}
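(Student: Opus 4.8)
The plan is to reduce the statement, by a short bookkeeping computation, to the restricted packing estimate of Lemma \ref{restricted_sparse_packing}. The key observation is that every $I\in\cS'$ ``sees'' the weights $w,\sigma$ only through its trace on the single interval $I(J)$, on which both weights are constant.

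First I would check that for every $I\in\cS'$ one has $w(I)=w(I\cap I(J))$ and $\sigma(I)=\sigma(I\cap I(J))$: since $I\subseteq L\subseteq K$, Remark \ref{r: vanish} forces $w,\sigma$ to vanish on $I\setminus(J\cup I(J))$, while by the definition of $\cS'$ they vanish on $I\cap J$, so both are supported on $I\cap I(J)$. Because $w,\sigma$ are constant on $I(J)$ and $\sigma=w^{1-p}1\ci{\lbrace w>0\rbrace}$, this gives $\La w\Ra\ci{I}=\La w\Ra\ci{I(J)}\,t_I$ and $\La\sigma\Ra\ci{I}=\La\sigma\Ra\ci{I(J)}\,t_I$, where $t_I:=|I\cap I(J)|/|I|$ and $(\La w\Ra\ci{I(J)})^{p-1}\La\sigma\Ra\ci{I(J)}=1$. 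Substituting and using this last identity together with the elementary relation $(1-p)p'+1=1-p$ to collapse the constant prefactors, the generic summands become
\begin{equation*}
(\La w\Ra\ci{I})^{p}\La\sigma\Ra\ci{I}|I|=\La w\Ra\ci{I(J)}\,t_I^{\,p+1}|I|,\qquad (\La \sigma\Ra\ci{I})^{p'}\La w\Ra\ci{I}|I|=\La\sigma\Ra\ci{I(J)}\,t_I^{\,p'+1}|I|.
\end{equation*}

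It then remains to bound $\sum_{I\in\cS'}t_I^{\,q+1}|I|\lesssim_{q}|L\cap I(J)|/(1-\e)$ for $q\in\lbrace p,p'\rbrace$, which is precisely Lemma \ref{restricted_sparse_packing} with $E:=I(J)$ (applied once as stated and once with $p'$ in place of $p$, the proof of that lemma working verbatim for any exponent larger than $1$); here one uses that $\cS'$, being a subfamily of the martingale $\e$-sparse family $\cS$ with all its members contained in $L$, is itself martingale $\e$-sparse. Multiplying by $\La w\Ra\ci{I(J)}$, resp.\ $\La\sigma\Ra\ci{I(J)}$, turns $|L\cap I(J)|$ into $w(L\cap I(J))\le w(L)$, resp.\ $\sigma(L\cap I(J))\le\sigma(L)$, which is the asserted bound. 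I do not anticipate a genuine obstacle: the whole content lies in the support observation of the second paragraph, after which everything funnels into Lemma \ref{restricted_sparse_packing}. The only points needing a little care are the harmless degenerate case $I\cap I(J)=\emptyset$ (the corresponding summand simply vanishes, so one may restrict to those $I\in\cS'$ meeting $I(J)$), and the routine verification that the prefactors $(\La w\Ra\ci{I(J)})^{p}\La\sigma\Ra\ci{I(J)}$ and $(\La\sigma\Ra\ci{I(J)})^{p'}\La w\Ra\ci{I(J)}$ indeed collapse to $\La w\Ra\ci{I(J)}$ and $\La\sigma\Ra\ci{I(J)}$ respectively, which is immediate from $\sigma=w^{1-p}1\ci{\lbrace w>0\rbrace}$ together with the explicit constants recorded in Lemma \ref{average_estimates_triadic}.
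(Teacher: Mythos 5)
Your proposal is correct and follows essentially the same route as the paper: both reduce the sum, via the observation that $w,\sigma$ are supported on $I\cap I(J)$ and constant there, to the restricted packing estimate of Lemma \ref{restricted_sparse_packing} with $E=I(J)$ (and with $p'$ in place of $p$ for the second bound), after which $\La w\Ra\ci{I(J)}|L\cap I(J)|=w(L\cap I(J))\leq w(L)$ finishes the argument.
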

\begin{proof}
Since $w,\sigma$ are constant on $I(J)$, by Lemma \ref{restricted_sparse_packing} we have
\begin{align*}
\sum_{I\in\mathcal{S}'}(\La w\Ra\ci{I})^{p}\La\sigma\Ra\ci{I}|I|&
=\sum_{I\in\mathcal{S}'}\left(\frac{|I\cap I(J)|}{|I|}\right)^{p+1}(\La w\Ra\ci{I(J)})^{p}\La\sigma\Ra\ci{I(J)}|I|\lesssim_{p}
\La w\Ra\ci{I(J)}\frac{|L\cap I(J)|}{1-\e}\\
&=\frac{w(L\cap I(J))}{1-\e}\leq\frac{w(L)}{1-\e}.
\end{align*}
The second estimate is proved similarly.
\end{proof}

\begin{lm}\label{partial_testing_w}
Let $K\in\mathbf{K}$, and $L$ be any subinterval of $K$. Set $J:=K^{m}$.

(a) There holds
\begin{equation*}
\sum_{\substack{I\in\mathcal{S},~I\subseteq L\\|I|\geq\left(\frac{1}{3}-3^{-k}\right)|I(J)|}}(\La w\Ra\ci{I})^{p}\La\sigma\Ra\ci{I}|I|\lesssim_{p}\frac{kw(L)}{1-\e},\qquad\sum_{\substack{I\in\mathcal{S},~I\subseteq L\\I|\geq\left(\frac{1}{3}-3^{-k}\right)|I(J)|}}(\La \sigma\Ra\ci{I})^{p'}\La w\Ra\ci{I}|I|\lesssim_{p}\frac{k\sigma(L)}{1-\e}
\end{equation*}

(b) There holds
\begin{equation*}
\sum_{\substack{I\in\mathcal{S},~I\subseteq L\\~I\nsubseteq K',~\forall K'\in\textup{ch}\ci{\mathbf{K}}(K)}}(\La w\Ra\ci{I})^{p}\La\sigma\Ra\ci{I}|I|\lesssim_{p}\frac{kw(L)}{1-\e},\qquad\sum_{\substack{I\in\mathcal{S},~I\subseteq L\\~I\nsubseteq K',~\forall K'\in\textup{ch}\ci{\mathbf{K}}(K)}}(\La \sigma\Ra\ci{I})^{p'}\La w\Ra\ci{I}|I|\lesssim_{p}\frac{k\sigma(L)}{1-\e}.
\end{equation*}
\end{lm}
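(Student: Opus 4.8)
The plan is to reduce both parts to the triadic estimates already obtained in Lemma \ref{testing_condition_K_triadic} (and its consequences), by exploiting the fact that any interval $I$ entering the sums is ``large'' compared to $|I(J)|$ and therefore meets only boundedly many of the triadic subintervals of $J$ of length $3^{1-k}|J|$. First I would record the structural facts coming from Section \ref{s: reguera-thiele}: the weights $w,\sigma$ are supported on $\bigcup_{J\in\mathbf{J}}I(J)$, so if $I$ does not intersect $J\cup I(J)$ the contribution vanishes by Remark \ref{r: vanish}; and if $I\cap J=\emptyset$ while $I\cap I(J)\neq\emptyset$ then $w,\sigma$ are constant on $I(J)$ and the contribution is handled by Lemma \ref{testing_intersect_only_I(J)_w}. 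So it remains to treat intervals $I$ that meet $J$, and by Lemma \ref{average_estimates}(b),(c) for such $I$ we have $\La w\Ra\ci{I}\lesssim\La w\Ra\ci{K}$ and $\La\sigma\Ra\ci{I}\lesssim_{p}\La\sigma\Ra\ci{K}$, i.e. the averages are comparable (up to constants) to the averages over $K$ itself.

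For part (a), the key point is a chain decomposition. Fix a triadic subinterval $K'$ of $J$ of length $3^{1-k}|J|=|I(J)|$; since every $I$ in the sum has $|I|\geq(\tfrac13-3^{-k})|I(J)|$, for each dyadic (indeed triadic, but here we just need length-scale) generation the intervals $I\in\cS$ of that length form, after intersecting with a fixed $K'$, a boundedly overlapping family, and because $\cS$ is martingale $\e$-sparse, along each chain the lengths decay geometrically, so \eqref{estimate_chain} applies. I would group the intervals $I$ according to the leftmost triadic subinterval of $J$ of length $|I(J)|$ they meet; there are at most $3^{k-1}$ such subintervals inside $J$, but each individual $I$ with $|I|\gtrsim|I(J)|$ meets at most a bounded number of them, so summing $\La w\Ra\ci{I}^{p}\La\sigma\Ra\ci{I}|I|$ over the $I$ meeting a fixed $K'$ gives, via \eqref{estimate_chain} together with Lemma \ref{average_estimates_triadic}, a bound of order $\La w\Ra\ci{K}^{p}\La\sigma\Ra\ci{K}|K'|\sim_{p}3^{-k}w(K)$ (using $\La w\Ra\ci{K}^{p}\La\sigma\Ra\ci{K}\sim_{p}3^{-k}\La w\Ra\ci{K}$ from Lemma \ref{average_estimates_triadic} and Remark \ref{A_p}). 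There are at most $3^{k-1}$ choices of $K'$ inside $J$, but only those $K'$ actually contained in (or meeting) $L$ contribute, and summing $3^{-k}w(K)$ over the $K'\subseteq L\cap J$ rebuilds $w(L\cap J)\leq w(L)$ up to the factor $\tfrac{1}{1-\e}$. The extra factor of $k$ is exactly the price of the chain sum reaching down through $k$ triadic generations inside $J$ (equivalently, the geometric series \eqref{estimate_chain} is replaced by a sum of $k$ such series when one also has to account for intervals straddling the boundary between adjacent $K'$'s); I would make this explicit by splitting, for each scale $3^{-j}|J|$ with $1\leq j\leq k$, the intervals $I$ of that scale into ``interior to some $K'$'' (handled by the triadic lemma) and ``straddling'', the latter having boundedly many per scale and contributing $\lesssim_{p}\La w\Ra\ci{K}^{p}\La\sigma\Ra\ci{K}\cdot 3^{-j}|J|$, which summed over $j$ gives $\lesssim_{p}k\cdot 3^{-k}w(K)\lesssim_{p}kw(L)/(1-\e)$ after noting $w(K)\sim 3^k w(I(J))\lesssim 3^k w(L)$ when $L\cap(J\cup I(J))\neq\emptyset$, or more carefully by bounding against $w(L)$ directly using that $L\subseteq K$ and the averages are comparable.

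For part (b), the hypothesis is that $I\subseteq L$ and $I\nsubseteq K'$ for every $K'\in\mathrm{ch}\ci{\mathbf{K}}(K)$; since the elements of $\mathrm{ch}\ci{\mathbf{K}}(K)$ are precisely the triadic subintervals of $J$ of length $3^{1-k}|J|$, an interval $I$ not contained in any of them which nonetheless meets $J$ must contain an endpoint of one of them, hence $|I|\geq(\tfrac13-3^{-k})|I(J)|$, so we are back in the situation of part (a) (after peeling off, via Lemma \ref{testing_intersect_only_I(J)_w}, those $I$ on which $w,\sigma$ vanish on $I\cap J$, and via Lemma \ref{average_estimates}(a) those with $I\cap J=\emptyset$). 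Thus part (b) follows from part (a) together with the two reduction lemmas. The main obstacle I anticipate is the bookkeeping in part (a): getting the factor of $k$ — and no worse — requires carefully controlling the ``straddling'' intervals scale by scale and invoking the martingale sparseness to sum geometrically within each scale, and one must be careful that the bound is against $w(L)$ (resp. $\sigma(L)$) and not merely $w(K)$, which is larger by a factor $3^k$; this is resolved by observing that any $I$ contributing meets $J\cup I(J)$, hence $L\cap(J\cup I(J))\neq\emptyset$, and then either $L\supseteq$ a full child $K'$ (so $w(L)\gtrsim 3^{-k}w(K)$) or $L$ is so small that only a bounded chain survives and \eqref{estimate_chain} alone, without the factor $k$, already suffices.
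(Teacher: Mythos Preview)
Your reduction for part (b) is correct and matches the paper: any $I\subseteq L$ not contained in some $K'\in\text{ch}\ci{\mathbf{K}}(K)$ that meets $J$ must contain an endpoint of such a $K'$, so either $|I|\geq(\tfrac13-3^{-k})|I(J)|$ (handled by (a)) or $w,\sigma$ vanish on $I\cap J$ (handled by Lemma \ref{testing_intersect_only_I(J)_w}).

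For part (a), however, your argument has real gaps. First, the claim that ``each individual $I$ with $|I|\gtrsim|I(J)|$ meets at most a bounded number'' of the subintervals $K'$ is simply false: an interval of length $|J|$ meets all $3^{k-1}$ of them. Second, grouping by the leftmost $K'$ an interval meets does not produce chains, so \eqref{estimate_chain} does not apply. Your fallback scale-by-scale splitting is too vague to be a proof: martingale sparseness gives geometric decay \emph{along chains}, not a bound on the number of intervals of a given length scale, and you have not explained how to control the latter. The final worry about $w(L)$ versus $w(K)$ is a symptom of the same missing mechanism.

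The paper's argument for (a) avoids all of this with one observation you are missing: by the two-weight $A_p$ bound (Remark \ref{A_p}) one has $(\La w\Ra\ci{I})^{p}\La\sigma\Ra\ci{I}|I|\lesssim_{p}\La w\Ra\ci{I}|I|=w(I)$. So it suffices to bound $\sum_{I\in\cS\ci{L}}w(I)$, where $\cS\ci{L}$ is the family of $I\in\cS$ with $I\subseteq L$ and $|I|\geq(\tfrac13-3^{-k})|I(J)|$. Stratify $\cS\ci{L}$ into generations: $\cS\ci{L}^{1}$ the maximal intervals, $\cS\ci{L}^{2}$ the maximal ones in $\cS\ci{L}\setminus\cS\ci{L}^{1}$, and so on. Each generation consists of pairwise disjoint subintervals of $L$, so $\sum_{I\in\cS\ci{L}^{n}}w(I)\leq w(L)$ automatically (this is why the $w(L)$-versus-$w(K)$ issue never arises). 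The number $N$ of generations is bounded because any chain in $\cS\ci{L}$ has top length $\leq|K|=3^{k}|I(J)|$ and bottom length $\geq\tfrac14|I(J)|$, while lengths shrink by $\e$ along chains; hence $\e^{N-1}3^{k}\geq\tfrac14$, giving $N\lesssim k/(1-\e)$. Summing over the $N$ generations yields $\sum_{I\in\cS\ci{L}}w(I)\leq Nw(L)\lesssim kw(L)/(1-\e)$.
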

\begin{proof}
(a) Let $\mathcal{S}\ci{L}$ be the family of intervals in $\mathcal{S}$ contained in $L$ of length at least $\left(\frac{1}{3}-3^{-k}\right)|I(J)|$. Let $N$ be the maximum length of a  chain (with respect to containment) in $\mathcal{S}\ci{L}$. Set
\begin{equation*}
\cS\ci{L}^{1}:=\lbrace\text{maximal intervals in }\mathcal{S}\ci{L}\rbrace,
\end{equation*}
and define inductively
\begin{equation*}
\cS\ci{L}^{n}:=\left\lbrace\text{maximal intervals in }\mathcal{S}\ci{L}\setminus\left(\bigcup_{r=1}^{n-1}\cS\ci{L}^{r}\right)\right\rbrace,~n=2,\ldots,N.
\end{equation*}
Notice that $\mathcal{S}\ci{L}=\bigcup_{n=1}^{N}\mathcal{S}\ci{L}^{n}$. In particular, for all $n=2,\ldots,N$, for all $I\in\mathcal{S}\ci{L}^{n}$, there exists $I'\in\mathcal{S}\ci{L}^{n-1}$ with $I\subsetneq I'$. Thus, picking some $I\ci{N}\in\mathcal{S}\ci{L}^{N}$, one can find $I\ci{1}\in\mathcal{S}\ci{L}^{1}$ with $I\ci{N}\subsetneq I_{1}$, therefore
\begin{equation*}
\e^{N-1}3^{k}|I(J)|=\e^{N-1}|K|\geq\e^{N-1}|L|\geq\e^{N-1}|I_{1}|\geq|I\ci{N}|\geq\left(\frac{1}{3}-3^{-k}\right)|I(J)|\geq\frac{1}{4}|I(J)|,
\end{equation*}
therefore
\begin{equation*}
N\leq\frac{k\log 3-\log 4}{\log(1/\e)}+1\lesssim\frac{k}{1-\e}.
\end{equation*}
It follows by Remark \ref{A_p} that
\begin{align*}
\sum_{\substack{I\in\mathcal{S}\\I\subseteq L,~|I|\geq\left(\frac{1}{3}-3^{-k}\right)|I(J)|}}(\La w\Ra\ci{I})^{p}\La\sigma\Ra\ci{I}|I|&\lesssim_{p}\sum_{I\in\mathcal{S}\ci{L}}\La w\Ra\ci{I}|I|
=\sum_{n=1}^{N}w\left(\bigcup\mathcal{S}\ci{L}^{n}\right)\leq
Nw(L)\lesssim\frac{kw(L)}{1-\e},
\end{align*}
concluding the proof of the first estimate. The second one is proved similarly.

(b) In view of (a), we only have to prove that
\begin{equation}
\label{inter_estimate}
\sum_{I\in\mathcal{S}'}(\La w\Ra\ci{I})^{p}\La\sigma\Ra\ci{I}|I|\lesssim_{p}\frac{w(L)}{1-\e},\qquad \sum_{I\in\mathcal{S}'}(\La \sigma\Ra\ci{I})^{p'}\La w\Ra\ci{I}|I|\lesssim_{p}\frac{\sigma(L)}{1-\e},
\end{equation}
where
\begin{equation*}
\mathcal{S}':=\left\lbrace I\in\mathcal{S}:~I\subseteq L,~|I|<\left(\frac{1}{3}-3^{-k}\right)|I(J)|,~I\nsubseteq K',~\forall K'\in\text{ch}\ci{\mathbf{K}}(K)\right\rbrace.
\end{equation*}
Recall that $\text{ch}\ci{\mathbf{K}}(K)$ coincides with the family of all triadic subintervals of $J$ of length $|I(J)|=3^{1-k}|J|$. Therefore, if an interval $I\in\mathcal{S}'$ intersects $J$, then it must contain an endpoint of some interval in $\text{ch}\ci{\mathbf{K}}(K)$, and since $|I|<\left(\frac{1}{3}-3^{-k}\right)|I(J)|$ it follows immediately from Remark \ref{r: vanish} that $w,\sigma$ vanish on $I\cap J$. Thus, \eqref{inter_estimate} follows immediately from Lemma \ref{testing_intersect_only_I(J)_w}.
\end{proof}

Proposition \ref{testing_condition_K_w} and Lemma \ref{testing_contains_endpoint_K_w} below establish the testing conditions \eqref{testing_conditions} for special classes of intervals $L$.

\begin{prop}\label{testing_condition_K_w}
Let $K\in\mathbf{K}$. Set $J:=K^{m}$, and define the families
\begin{equation*}
\mathcal{S}^{1}\ci{K}:=\lbrace I\in\mathcal{S}:~I\subseteq K\setminus I(J),~I\text{ is not contained in any }K'\in\textup{ch}\ci{\mathbf{K}}(K)\rbrace,
\end{equation*}
\begin{equation*}
\mathcal{S}^{2}\ci{K}:=\lbrace I\in\mathcal{S}:~I\subseteq K,~I\text{ intersects }I(J)\text{ but not }J\rbrace,
\end{equation*}
\begin{equation*}
\mathcal{S}^{3}\ci{K}:=\left\lbrace I\in\mathcal{S}:~I\subseteq K,~I\text{ intersects both }J\text{ and }I(J),~|I|\geq 2|I(J)|\right\rbrace,
\end{equation*}
\begin{equation*}
\mathcal{S}^{4}\ci{K}:=\lbrace I\in\mathcal{S}:~I\subseteq K,~I\text{ intersects both }J \text{ and }I(J),~|I|<2|I(J)|\rbrace.
\end{equation*}

(a) There holds
\begin{equation*}
\sum_{I\in\mathcal{S}\ci{K}^{m}}(\La w\Ra\ci{I})^{p}\La\sigma\Ra\ci{I}|I|\lesssim_{p}\frac{k3^{-k}w(K)}{1-\e},\qquad\sum_{I\in\mathcal{S}\ci{K}^{m}}(\La \sigma\Ra\ci{I})^{p'}\La w\Ra\ci{I}|I|\lesssim_{p}\frac{k\sigma(K)}{1-\e},\qquad\forall m=1,2,3.
\end{equation*}

(b) The testing conditions in \eqref{testing_conditions} hold in the case $L=K$.
\end{prop}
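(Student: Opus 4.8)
The plan is to take the triadic argument of Lemma~\ref{testing_condition_K_triadic} as a blueprint, the only genuinely new feature being that a general interval $I\subseteq K$ may straddle the common endpoint of $J=K^{m}$ and $I(J)$; this is precisely why the four families $\mathcal{S}^{1}_{K},\dots,\mathcal{S}^{4}_{K}$ are needed in place of the two of the triadic case. The arithmetic engine producing the saving factor $3^{-k}$ in the $w$-direction is the \emph{improved $A_{p}$ identity} $(\La w\Ra\ci{K})^{p-1}\La\sigma\Ra\ci{K}\sim_{p}3^{-k}$, read off directly from the explicit formulas in Lemma~\ref{average_estimates_triadic}(a), together with the exact relations on the finalized interval $I(J)$, namely $(\La w\Ra\ci{I(J)})^{p-1}\La\sigma\Ra\ci{I(J)}=1$ and $(\La\sigma\Ra\ci{I(J)})^{p'-1}\La w\Ra\ci{I(J)}=1$ (which hold since $w$ and $\sigma=w^{1-p}$ are constant on $I(J)$). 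In the $\sigma$-direction no analogue of the $3^{-k}$ saving is needed, because in part~(b) the $\sigma$-masses telescope through Lemma~\ref{packing_K} without it, and there the crude bound $(\La\sigma\Ra\ci{I})^{p'-1}\La w\Ra\ci{I}\lesssim_{p}1$ from Remark~\ref{A_p} suffices.

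For part~(a) I treat $\mathcal{S}^{1}_{K},\mathcal{S}^{2}_{K},\mathcal{S}^{3}_{K}$ in turn. In $\mathcal{S}^{1}_{K}$, every interval with a non-zero contribution meets $J$ but not $I(J)$, and by Remark~\ref{r: vanish} it then has length $\geq(\tfrac13-3^{-k})|I(J)|$; so Lemma~\ref{average_estimates}(b) gives $(\La w\Ra\ci{I})^{p-1}\La\sigma\Ra\ci{I}\lesssim_{p}(\La w\Ra\ci{K})^{p-1}\La\sigma\Ra\ci{K}\sim_{p}3^{-k}$, whence $(\La w\Ra\ci{I})^{p}\La\sigma\Ra\ci{I}|I|\lesssim_{p}3^{-k}w(I)$, and the $w(I)$ are summed after organizing these intervals into $\lesssim k/(1-\e)$ disjointified layers exactly as in the proof of Lemma~\ref{partial_testing_w}(a). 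For $\mathcal{S}^{2}_{K}$ I split according to whether $I\subseteq I(J)$: the first subfamily is controlled by the packing bound~\eqref{sparse packing} (using that $w,\sigma$ are constant on $I(J)$ and $(\La w\Ra\ci{I(J)})^{p-1}\La\sigma\Ra\ci{I(J)}=1$), and the second by Lemma~\ref{restricted_sparse_packing} with $E=I(J)$ together with Remark~\ref{r: vanish} (which forces $w(I)=w(I\cap I(J))$); both contribute $\lesssim_{p}w(I(J))/(1-\e)\sim 3^{-k}w(K)/(1-\e)$. For $\mathcal{S}^{3}_{K}$, all its intervals contain the common endpoint of $J$ and $I(J)$ and hence form a chain; their lengths run from $2|I(J)|$ to $|K|$, so there are $\lesssim k/(1-\e)$ of them, and for each one Lemma~\ref{average_estimates}(c) together with $(\La w\Ra\ci{I(J)})^{p-1}\La\sigma\Ra\ci{I(J)}=1$ yields $(\La w\Ra\ci{I})^{p}\La\sigma\Ra\ci{I}|I|\lesssim_{p}w(I(J))\sim 3^{-k}w(K)$. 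In each case the companion $\sigma$-estimate is obtained by the same computation with $w,\sigma$ and $p,p'$ interchanged, using $(\La\sigma\Ra\ci{I})^{p'-1}\La w\Ra\ci{I}\lesssim_{p}1$ and $(\La\sigma\Ra\ci{I(J)})^{p'-1}\La w\Ra\ci{I(J)}=1$; the target $k\sigma(K)/(1-\e)$ carries no factor $3^{-k}$.

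For part~(b) the plan is to decompose $\{I\in\mathcal{S}:I\subseteq K\}$ by assigning to each $I$ with a non-zero contribution the smallest $\mathbf{K}$-interval $K'$ containing it, and to verify, via the trichotomy of Lemma~\ref{average_estimates} applied with this $K'$ (minimality of $K'$ forcing that $I$ lies in no child of $K'$), that $I$ belongs to exactly one of $\mathcal{S}^{1}_{K'},\mathcal{S}^{2}_{K'},\mathcal{S}^{3}_{K'},\mathcal{S}^{4}_{K'}$, and that distinct $K'$ give disjoint families; consequently the full sum over $I\subseteq K$ is at most the sum over $K'\in\mathbf{K}$ with $K'\subseteq K$ of the contributions of the four families at $K'$. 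The $m=1,2,3$ contributions at a fixed $K'$ are bounded by part~(a) by $\lesssim_{p}k3^{-k}w(K')/(1-\e)$ and $\lesssim_{p}k\sigma(K')/(1-\e)$. It remains to bound $\mathcal{S}^{4}_{K'}$ directly: these intervals all contain the common endpoint of $J'=(K')^{m}$ and $I(J')$, hence form a chain with $|I|<2|I(J')|$; on the subchain with $|I|<(\tfrac13-3^{-k})|I(J')|$, Remark~\ref{r: vanish} makes $w,\sigma$ vanish on $I\cap J'$ and one applies Lemma~\ref{restricted_sparse_packing} with $E=I(J')$, while the remaining $\lesssim 1/(1-\e)$ intervals are estimated crudely using $w(I)\lesssim w(I(J'))\sim 3^{-k}w(K')$; this gives $\sum_{I\in\mathcal{S}^{4}_{K'}}(\La w\Ra\ci{I})^{p}\La\sigma\Ra\ci{I}|I|\lesssim_{p}3^{-k}w(K')/(1-\e)$ and, symmetrically, $\lesssim_{p}\sigma(K')/(1-\e)$. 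Finally, summing over $K'\subseteq K$ and invoking Lemma~\ref{packing_K}, namely $\sum_{K'\subseteq K}w(K')\sim 3^{k}w(K)$ and $\sum_{K'\subseteq K}\sigma(K')\sim_{p}\sigma(K)$, turns $\sum_{K'}k3^{-k}w(K')/(1-\e)$ into $\lesssim_{p}kw(K)/(1-\e)$ and $\sum_{K'}k\sigma(K')/(1-\e)$ into $\lesssim_{p}k\sigma(K)/(1-\e)$, which is~(b).

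I expect the main obstacle to be bookkeeping rather than hard analysis: checking that, as $K'$ ranges over the $\mathbf{K}$-ancestors of the intervals contained in $K$, the families $\mathcal{S}^{m}_{K'}$ genuinely partition the set of intervals with a non-zero contribution — this relies on the structural fact that every $\mathbf{K}$-interval strictly inside $K'$ lies inside a child of $K'$, so any $I$ meeting $I(J')$, or not lying in a child of $K'$, has $K'$ as its minimal $\mathbf{K}$-ancestor — and handling the straddling families $\mathcal{S}^{3}_{K}$ and, above all, $\mathcal{S}^{4}_{K'}$, which have no triadic counterpart. The delicate point in $\mathcal{S}^{4}_{K'}$ is that such intervals can be arbitrarily short, so one cannot merely count them; the remedy is to combine the vanishing of $w,\sigma$ near triadic endpoints (Remark~\ref{r: vanish}), which kills the contribution of $I\cap J'$ once $I$ is short, with the geometric decay of lengths along a sparse chain, repackaged through the Carleson-type Lemma~\ref{restricted_sparse_packing}.
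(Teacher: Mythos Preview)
Your proposal is correct and follows the same overall architecture as the paper: the four-family decomposition in part~(a), the chain argument for $\mathcal{S}^{3}_{K}$, and in part~(b) the assembly over all $\mathbf{K}$-descendants $K'\subseteq K$ followed by Lemma~\ref{packing_K}. Two of your sub-arguments are, however, more elaborate than necessary. For $\mathcal{S}^{1}_{K}$ the paper simply bounds $(\langle w\rangle_{I})^{p}\langle\sigma\rangle_{I}\lesssim_{p}3^{-k}\langle w\rangle_{K}$ via Lemma~\ref{average_estimates}(b) and then uses the plain packing bound $\sum_{I\in\mathcal{S}^{1}_{K}}|I|\leq|K|/(1-\e)$, avoiding the layering from Lemma~\ref{partial_testing_w}(a) altogether (and in fact saving the factor $k$ for this family). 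For $\mathcal{S}^{4}_{K'}$ in part~(b) the paper observes that every such $I$ lies in a fixed subinterval of $K'$ of length $5|I(J')|$, bounds $(\langle w\rangle_{I})^{p}\langle\sigma\rangle_{I}\lesssim_{p}\langle w\rangle_{I(J')}$ via Lemma~\ref{average_estimates} and Remark~\ref{A_p}, and packs $\sum_{I}|I|\leq 5|I(J')|/(1-\e)$; then the sum over $K'$ of $w(I((K')^{m}))$ telescopes to $w(K)$ directly, without passing through Lemma~\ref{packing_K}. Your short/long split with Lemma~\ref{restricted_sparse_packing} works as well, but is not needed. The common theme of the paper's simplifications is that once the averages over $I$ are controlled by those over a fixed reference interval, one can sum Lebesgue lengths via~\eqref{sparse packing} rather than weighted masses via a layer count.
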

\begin{proof}
(a) By Lemma \ref{average_estimates} (b) we have
\begin{align*}
\sum_{I\in\mathcal{S}\ci{K}^{1}}(\La w\Ra\ci{I})^{p}\La\sigma\Ra\ci{I}|I|\lesssim_{p} 3^{-k}\sum_{I\in\mathcal{S}\ci{K}^{1}}\La w\Ra\ci{K}|I|\leq\frac{3^{-k}\La w\Ra\ci{K}|K|}{1-\e}=\frac{3^{-k}w(K)}{1-\e}.
\end{align*}
Let $S(J)$ be the triadic child of $K$ containing $I(J)$. All intervals in $\mathcal{S}^{2}\ci{K}$ are contained in $S(J)$, therefore by Lemma \ref{testing_intersect_only_I(J)_w} and Lemma \ref{average_estimates_triadic} we obtain
\begin{align*}
\sum_{I\in\mathcal{S}\ci{K}^{2}}(\La w\Ra\ci{I})^{p}\La\sigma\Ra\ci{I}|I|&\lesssim_{p}\frac{w(S(J))}{1-\e}=\frac{w(I(J))}{1-\e}\sim\frac{3^{-k}w(K)}{1-\e}.
\end{align*}

Moreover, by Lemma \ref{average_estimates} (c) and Lemma \ref{average_estimates_triadic} we have
\begin{align*}
\sum_{I\in\mathcal{S}\ci{K}^{3}}(\La w\Ra\ci{I})^{p}\La\sigma\Ra\ci{I}|I|&\lesssim_{p}\sum_{I\in\mathcal{S}_{k}^{3}}(\La w\Ra\ci{I(J)})^{p}\La\sigma\Ra\ci{I(J)}\frac{|I(J)|}{|I|}|I|\sim_{p}\La w\Ra\ci{K}\sum_{I\in\mathcal{S}_{k}^{3}}|I(J)|=3^{-k}w(K)(\#\cS\ci{K}^3).
\end{align*}
Notice that all elements of $\mathcal{S}^{3}\ci{K}$ contain the common endpoint of $J$ and $I(J)$, therefore $\mathcal{S}^3\ci{K}$ is linearly ordered with respect to containment. Denoting by $I_1$ the largest interval in $\cS^3\ci{K}$ and by $I\ci{N}$ the smallest one, where $N:=\#\cS^{3}\ci{K}$, we obtain
\begin{align*}
3^{k}|I(J)|=|K|\geq|I_1|\geq\frac{|I\ci{N}|}{\e^{N-1}}\geq\frac{2|I(J)|}{\e^{N-1}},
\end{align*}
therefore
\begin{equation*}
\#\cS\ci{K}^3=N\leq \frac{k\log 3-\log 2}{\log(1/\e)}+1\lesssim\frac{k}{1-\e},
\end{equation*}
concluding the proof of the first three estimates.

The other three estimates are proved similarly, recalling from Lemma \ref{average_estimates_triadic} that $\sigma(I(J))\sim_{p} \sigma(K)$ and $\La\sigma\Ra\ci{I(J)}\sim_{p} 3^{k}\La\sigma\Ra\ci{K}$.

(b) It is clear that
\begin{equation*}
\sum_{\substack{I\in\mathcal{S}\\I\subseteq K}}(\La w\Ra\ci{I})^{p}\La\sigma\Ra\ci{I}|I|=
\sum_{\substack{K'\in\mathbf{K}\\K'\subseteq K}}\sum_{m=1}^{4}\sum_{I\in\mathcal{S}^{m}\ci{K'}}(\La w\Ra\ci{I})^{p}\La\sigma\Ra\ci{I}|I|.
\end{equation*}
By (a) and Lemma \ref{packing_K} we have
\begin{align*}
\sum_{\substack{K'\in\mathbf{K}\\K'\subseteq K}}\sum_{m=1}^{3}\sum_{I\in\mathcal{S}^{m}\ci{K'}}(\La w\Ra\ci{I})^{p}\La\sigma\Ra\ci{I}|I|\lesssim_{p}
\sum_{\substack{K'\in\mathbf{K}\\K'\subseteq K}}\frac{k3^{-k}w(K')}{1-\e}\sim\frac{kw(K)}{1-\e}.
\end{align*}
Moreover, notice that all intervals in $\mathcal{S}^{4}\ci{K}$ are contained in a subinterval of $K$ of length (say) $5|I(J)|$, therefore by Lemma \ref{average_estimates} and Remark \ref{A_p} we obtain
\begin{align*}
\sum_{\substack{K'\in\mathbf{K}\\K'\subseteq K}}\sum_{I\in\mathcal{S}^{4}\ci{K'}}(\La w\Ra\ci{I})^{p}\La\sigma\Ra\ci{I}|I|&\lesssim_{p}
\sum_{\substack{K'\in\mathbf{K}\\K'\subseteq K}}\La w\Ra\ci{I((K')^m)}\sum_{I\in\mathcal{S}^{4}\ci{K'}}|I|\leq\frac{5}{1-\e}\sum_{\substack{K'\in\mathbf{K}\\K'\subseteq K}}\La w\Ra\ci{I((K')^m)}|I((K')^{m})|\\
&=\frac{5}{1-\e}\sum_{\substack{K'\in\mathbf{K}\\K'\subseteq K}}w(I((K')^m))=
\frac{5}{1-\e}w(K),
\end{align*}
concluding the proof of the first estimate. The second one is proved similarly.
\end{proof}

\begin{lm}\label{testing_contains_endpoint_K_w}
Let $K\in\mathbf{K}$, and let $L$ be any subinterval of $K$ sharing an endpoint with $K$. Then, the estimates in \eqref{testing_conditions} hold for $L$.
\end{lm}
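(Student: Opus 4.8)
The plan is to prove the two estimates of \eqref{testing_conditions} for such $L$ by induction; I will describe the argument for the first one, the second being identical with $w$ and $\sigma$ interchanged and $p$ replaced by $p'$. The inductive step is a decomposition of the sum over $I\in\cS$ with $I\subseteq L$ according to the position of $I$ relative to the family $\textup{ch}\ci{\mathbf{K}}(K)$, which we recall consists precisely of the triadic subintervals of $J:=K^{m}$ of length $|I(J)|=3^{1-k}|J|$, and which tiles $J$. We may assume $w$ and $\sigma$ do not vanish identically on $L$ (otherwise every average $\La w\Ra\ci{I}$, $\La\sigma\Ra\ci{I}$ with $I\subseteq L$ is zero and there is nothing to prove), and, as already noted, that $\cS$ is finite; the latter makes the induction well founded, since each step passes from $K$ to an element of $\textup{ch}\ci{\mathbf{K}}(K)$, of length $3^{-k}|K|$, and once $|K|<\min_{I\in\cS}|I|$ no interval of $\cS$ lies in $L\subseteq K$.

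For the inductive step I would write $\{I\in\cS:\ I\subseteq L\}$ as a disjoint union of three families: (1) the $I$ contained in no $K'\in\textup{ch}\ci{\mathbf{K}}(K)$; (2) the $I$ contained in some $K'\in\textup{ch}\ci{\mathbf{K}}(K)$ with $K'\subseteq L$; and (3) the $I$ contained in $K'_{*}\cap L$, where $K'_{*}$ is the \emph{unique} element of $\textup{ch}\ci{\mathbf{K}}(K)$, if it exists, that meets $L$ without being contained in $L$. The sum over (1) is $\lesssim_{p}kw(L)/(1-\e)$ by Lemma \ref{partial_testing_w}(b). For (2), Proposition \ref{testing_condition_K_w}(b) applied with $L=K'$ bounds the sum over $\{I\subseteq K'\}$ by $\lesssim_{p}kw(K')/(1-\e)$, and summing over the pairwise disjoint $K'\in\textup{ch}\ci{\mathbf{K}}(K)$ with $K'\subseteq L$, together with $\sum_{K'}w(K')\le w(L)$, gives $\lesssim_{p}kw(L)/(1-\e)$. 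For (3), one checks that $K'_{*}\cap L$ shares an endpoint with $K'_{*}\in\mathbf{K}$, so the induction hypothesis applies to the pair $(K'_{*},K'_{*}\cap L)$ and bounds the sum over (3) by $\lesssim_{p}kw(K'_{*}\cap L)/(1-\e)\le kw(L)/(1-\e)$. Adding the three contributions completes the inductive step, and with it the proof.

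The part I expect to require the most care is the geometric bookkeeping hidden in (3). Since $L$ abuts an endpoint of $K$ while $J=K^{m}$ is the \emph{middle} triadic child of $K$, the set $L\cap J$ is either empty, or all of $J$, or an interval abutting an endpoint of $J$; in every case $L\cap J$ meets the tiling $\textup{ch}\ci{\mathbf{K}}(K)$ of $J$ in a (possibly empty) initial block of whole intervals followed by at most one partial interval, the latter being the asserted $K'_{*}$, and when it occurs $K'_{*}\cap L$ indeed inherits an endpoint of $K'_{*}$ --- which is precisely what lets the induction proceed. The remaining points, namely that families (1)--(3) are disjoint and exhaust $\{I\in\cS:\ I\subseteq L\}$ and the monotonicity bounds $w(E)\le w(F)$, $\sigma(E)\le\sigma(F)$ for $E\subseteq F$, are routine.
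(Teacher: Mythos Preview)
Your induction does not close: the implied constant grows with the depth of the recursion. Write the inductive claim as $\mathrm{Sum}(L)\le C\,kw(L)/(1-\e)$. The step you describe gives
\[
\mathrm{Sum}(L)\ \le\ (A_1+A_2)\,\frac{kw(L)}{1-\e}\ +\ \mathrm{Sum}(K'_*\cap L),
\]
with $A_1,A_2$ the constants from Lemma~\ref{partial_testing_w}(b) and Proposition~\ref{testing_condition_K_w}(b). Applying the induction hypothesis to the last term yields constant $A_1+A_2+C$, not $C$. This would be harmless if $w(K'_*\cap L)\le \theta\,w(L)$ for some fixed $\theta<1$, but exactly in the situation you must recurse through---when $L$ meets $J=K^m$ only in a single $K'_*\in\textup{ch}\ci{\mathbf{K}}(K)$ which it does not fully contain---one has $L\setminus K'_*\subseteq R(J)$ where $w,\sigma$ vanish (Remark~\ref{r: vanish}), so $w(K'_*\cap L)=w(L)$ and there is no decay. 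This ``bad'' configuration can repeat through arbitrarily many generations of $\mathbf{K}$ before the descending chain of $K'_*$'s drops below $\min_{I\in\cS}|I|$, so the constant you obtain depends on $\cS$.

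The paper avoids this by not inducting. It iterates $L=L_1\supseteq L_2\supseteq\cdots$ through the bad levels until the first $L_N$ that is ``good'' (contains a full element of $\textup{ch}\ci{\mathbf{K}}(K_N)$ or $I(J_N)$, or meets $J_N$ only in whole children), then treats the contribution of \emph{all} the intervals $I\subseteq L$ with $I\cap K_N\neq\emptyset$, $I\not\subseteq K_N$---which is the union of your families~(1) over all bad levels---in a single stroke. The point is that each such $I$ satisfies $w(I)=w(I\cap K_N)$, $\sigma(I)=\sigma(I\cap K_N)$, and $I\cap K_N$ shares an endpoint with $K_N$ and lies in $L_N$; so Lemma~\ref{comparison_contain_endpoint} and Remark~\ref{A_p} give $(\langle w\rangle_{I\cap K_N})^p\langle\sigma\rangle_{I\cap K_N}\lesssim_p\langle w\rangle_{L_N}$ uniformly, and then the Carleson-type Lemma~\ref{restricted_sparse_packing} bounds $\sum_I(|I\cap K_N|/|I|)^{p+1}|I|\lesssim_p|L\cap K_N|/(1-\e)$. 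This yields a single $\lesssim_p w(L)/(1-\e)$ for the whole outer family, independent of how many bad levels there were---which is precisely what your level-by-level use of Lemma~\ref{partial_testing_w}(b) cannot deliver.
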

\begin{proof}
We prove only the first estimate in \eqref{testing_conditions}, noting that the second one is proved similarly.

Set $J:=K^{m}$. We denote by $A\ci{J}$ the set of all triadic subintervals of $J$ of length $3^{1-k}|J|$, and we enumerate $A\ci{J}:=\lbrace K\ci{1,J},\ldots,K\ci{3^{k-1},J}\rbrace$, so that successive intervals in this enumeration are adjacent and $I(J),K\ci{1,J}$ are adjacent. Note that $A\ci{J}=\text{ch}\ci{\mathbf{K}}(K)$.

We denote by $A\ci{1,J}$ the family of all intervals in $A\ci{J}$ that intersect $L$, and by $A\ci{2,J}$ the family of all intervals in $A\ci{J}$ that are contained in $L$. We will say that $L$ is \textbf{good} for $K$ if either $A\ci{1,J}=A\ci{2,J}$ or $L$ contains at least one interval in $A\ci{J}\cup\lbrace I(J)\rbrace$.

Assume first that $L$ is good. By Lemma \ref{testing_condition_K_w} and disjointness of the  intervals in $A\ci{J}$ we deduce
\begin{align}
\label{good_1}
\sum_{K'\in A\ci{2,J}}\sum_{\substack{I\in\mathcal{S}\\~I\subseteq K'}}(\La w\Ra\ci{I})^{p}\La\sigma\Ra\ci{I}|I|\lesssim_{p}\sum_{K'\in A\ci{2,J}}\frac{kw(K')}{1-\e}\leq \frac{kw(L)}{1-\e}.
\end{align}
If $A\ci{1,J}=A\ci{2,J}$, then combining Lemma \ref{partial_testing_w} with  \eqref{good_1} we deduce the desired result.

Assume now that $A\ci{1,J}$ and $A\ci{2,J}$ do not coincide. Then, there is exactly one element $K'$ of $A\ci{1,J}$ not belonging to $A\ci{2,J}$, for $L$ is not contained in $J$. Since $L$ contains at least one interval in $A\ci{J}\cup\lbrace I(J)\rbrace$, by Lemma \ref{average_estimates_triadic} we obtain $w(K')\leq w(L)$ (and also $\sigma(K')\lesssim_{p} \sigma(L)$), and thus by Lemma \ref{testing_condition_K_w} we obtain
\begin{equation}
\label{good_2}
\sum_{\substack{I\in\mathcal{S}\\I\subseteq L\cap K'}}(\La w\Ra\ci{I})^{p}\La\sigma\Ra\ci{I}|I|\lesssim_{p}\frac{kw(K')}{1-\e}\lesssim_{p}\frac{kw(L)}{1-\e}.
\end{equation}
Combining Lemma \ref{partial_testing_w} with \eqref{good_1} and \eqref{good_2} we deduce the desired estimate.

Assume now that $L$ is not good. Then, it is clear that $A\ci{1,J}=\lbrace K\ci{3^{k-1},J}\rbrace$ and $A\ci{2,J}=\emptyset$, so in particular
\begin{equation}
\label{predecomp}
L\cap(J\cup I(J))=L\cap K\ci{3^{k-1},J}.
\end{equation}
Set
\begin{equation*}
K_1:=K,~J_1:=(K_1)^m,~K_2:=K\ci{3^{k-1},J_1},~J_2:=(K_2)^{m} ,
\end{equation*}
and
\begin{equation*}
L_1:=L,~L_2:=L_1\cap K_2.
\end{equation*}
Note that \eqref{predecomp} coupled with the fact that $L$ and $K_1$ share an endpoint implies that
\begin{equation}
\label{one step decomposition}
L=R(J_1)\cup L_2,
\end{equation}
where $R(J_1)$ denotes the triadic child of $K_1$ not containing $I(J_1)$. Also $|L_{2}|\leq |K_2|=3^{1-k}|R(J_1)|\leq 3^{1-k}|L_1|$.

Next, we repeat the above procedure for the interval $L_2\subseteq K_2$, noting that $L_2$ shares an endpoint with $K_2$. 

Continuing thus inductively, we construct (possibly finite) sequences of intervals $K_1,K_2,\ldots$, $L_1,L_2,\ldots$  and $J_1,J_2,\ldots$, such that
\begin{equation*}
K_1=K,\qquad J_{l}=(K_l)^m,\qquad K_{l+1}=K\ci{3^{k-1},J_l},\qquad\forall l=1,2,\ldots,
\end{equation*}
\begin{equation*}
L_1=L,\qquad L_{l+1}=L_{l}\cap K_{l+1}=L_{l}\cap(J_{l}\cup I(J_{l})),\qquad\forall l=1,2,\ldots,
\end{equation*}
\begin{equation*}
L_{l},K_{l}\text{ share an endpoint, for all }l=1,2,\ldots.
\end{equation*}
The construction terminates at some positive integer $N\geq 2$ if and only if the interval $L\ci{N}$ is good for $K\ci{N}$. Note that induction using \eqref{one step decomposition} shows that
\begin{equation*}
L=R(J_1)\cup\ldots R(J_{l-1})\cup L_{l},\qquad\forall l=2,3,\ldots.
\end{equation*}
In particular, $w,\sigma$ vanish in $L\setminus K_{l}$, $L_{l}=L\cap K_{l}$,  and $|L_{l}|\leq 3^{-l+1}|L|$, for all $l=1,2,3,\ldots$. We now distinguish two cases.

\textbf{Case 1.} The construction terminates at some positive integer $N\geq 2$. Then, the family of all intervals in $\cS$ contained in $L$ and on which $w,\sigma$ might not vanish is contained in the union of the following two families:
\begin{equation*}
\mathcal{S}\ci{L}^1:=\lbrace I\in\mathcal{S}:~I\subseteq L,~I\cap K\ci{N}\neq\emptyset,~I\nsubseteq K\ci{N}\rbrace,\qquad\mathcal{S}\ci{L}^2:=\lbrace I\in\mathcal{S}:~I\subseteq L\ci{N}\rbrace.
\end{equation*}
It is clear that for every $I\in\mathcal{S}\ci{L}^1$, the intervals $I\cap K\ci{N},L\ci{N},K\ci{N}$ share an endpoint, and $I\cap K\ci{N}\subseteq L\ci{N}$. Since $w,\sigma$ vanish on $L\setminus K\ci{N}$, it follows from Lemma \ref{comparison_contain_endpoint}, Remark \ref{A_p} and Lemma \ref{restricted_sparse_packing} that
\begin{align*}
\sum_{I\in\mathcal{S}^1\ci{L}}(\La w\Ra\ci{I})^{p}\La\sigma\Ra\ci{I}|I|&=\sum_{I\in\mathcal{S}^1\ci{L}}\left(\frac{|I\cap K\ci{N}|}{|I|}\right)^{p+1}(\La w\Ra\ci{I\cap K\ci{N}})^{p}\La\sigma\Ra\ci{I\cap K\ci{N}}|I|\\
&\lesssim_{p}\La w\Ra\ci{L\ci{N}}\sum_{I\in\mathcal{S}^1\ci{L}}\left(\frac{|I\cap K\ci{N}|}{|I|}\right)^{p+1}|I|\lesssim_{p}\frac{\La w\Ra\ci{L\ci{N}}|L\cap K\ci{N}|}{1-\e}=\frac{w(L)}{1-\e}.
\end{align*}
Moreover, since $L\ci{N}$ is good we have
\begin{equation*}
\sum_{I\in\mathcal{S}^2\ci{L}}(\La w\Ra\ci{I})^p\La\sigma\Ra\ci{I}|I|\lesssim_{p}\frac{kw(L\ci{N})}{1-\e}=\frac{kw(L)}{1-\e},
\end{equation*}
concluding the proof in this case.

\textbf{Case 2.} Assume that the sequence never terminates. Then $L$ is contained in $\bigcup_{l=1}^{\infty}R(J_l)$ up to a set of zero measure (in fact a singleton). It follows that $w,\sigma$ vanish a.e on $L$, and thus we have nothing to show.
\end{proof}

Now we are in a position to prove Proposition \ref{p: local_testing_condition} in full generality.

\begin{proof}[Proof (of Proposition \ref{p: local_testing_condition})]
We prove only the first estimate in \eqref{testing_conditions}, noting that the second one is proved similarly.

Let $K$ be the smallest interval in $\mathbf{K}$ containing $L$, and set $J:=K^m$. We denote by $\wt{A}\ci{J}$ the set of all triadic subintervals of $J$ of length $3^{1-k}|J|$ intersecting $L$. In view of Lemma \ref{partial_testing_w}, it suffices to proves that
\begin{equation*}
\sum_{K'\in\wt{A}\ci{J}}\sum_{\substack{I\in\mathcal{S}\\I\subseteq L\cap K'}}(\La w\Ra\ci{I})^{p}\La\sigma\Ra\ci{I}|I|\lesssim_{p}\frac{kw(L)}{1-\e}.
\end{equation*}
It is clear that $L\cap K'$ is a subinterval of $K'$ sharing an endpoint with $K'$, for all $K'\in \wt{A}\ci{J}$. Therefore, it follows from Lemma \ref{testing_contains_endpoint_K_w} and disjointness of the intervals in $\wt{A}\ci{J}$ that
\begin{align*}
\sum_{K'\in \wt{A}\ci{J}}\sum_{\substack{I\in\mathcal{S}\\I\subseteq L\cap K'}}(\La w\Ra\ci{I})^{p}\La\sigma\Ra\ci{I}|I|\lesssim_{p}\sum_{K'\in \wt{A}\ci{J}}\frac{kw(L\cap K')}{1-\e}\leq\frac{kw(L)}{1-\e},
\end{align*}
concluding the proof.
\end{proof}

\subsection{Verifying global testing conditions} \label{s: direct sum}

In this subsection we give details on the verification of the global testing conditions \eqref{global testing conditions}. For all positive integers $k>3000$, we denote by $w_{k},\sigma_{k}$ the weights of Section \ref{s: reguera-thiele} that were constructed for these $k,p$. Choose some
\begin{equation*}
r\in\left(\max\left(1,\frac{1}{p-1}\right),p'\right),
\end{equation*}
where we recall that $p'=\frac{p}{p-1}$. Set
\begin{equation*}
\wt{w}_{k}:=\frac{1}{k^r}w_{k},~k=4000,4001,4002,\ldots.
\end{equation*}
Following Reguera--Scurry \cite{reguera-scurry}, we consider the weights $\wt{w},\sigma$ on $\R$ given by
\begin{equation*}
\wt{w}(x):=\sum_{k=4000}^{\infty}\wt{w}_{k}(x-9^{k}),~\sigma(x):=\sum_{k=4000}^{\infty}\sigma_{k}(x-9^{k}),\qquad x\in\R.
\end{equation*}
Note that $\wt{w},\sigma$ vanish outside $\bigcup_{k=4000}^{\infty}I_{k}$, where
\begin{equation*}
I_{k}:=[9^{k},9^{k}+1),~\forall k=4000,4001,\ldots,
\end{equation*}
and that $\wt{w}(I_{k})=k^{-r},~\sigma(I_{k})\sim_{p}3^{-k}$, for all $k=4000,4001,\ldots$. 

\begin{prop}\label{prop: global}
Let $0<\e<1$, and let $\cS$ be any martingale $\e$-sparse family of intervals in $\R$. Let $L$ be any interval in $\R$. Then
\begin{equation*}
\sum_{\substack{I\in\mathcal{S}\\I\subseteq L}}(\La\wt{w}\Ra\ci{I})^{p}\La\sigma\Ra\ci{I}|I|\lesssim_{p}\frac{\wt{w}(L)}{1-\e},\qquad\sum_{\substack{I\in\mathcal{S}\\I\subseteq L}}(\La\sigma\Ra\ci{I})^{p'}\La \wt{w}\Ra\ci{I}|I|\lesssim_{p}\frac{\sigma(L)}{1-\e}.
\end{equation*}
\end{prop}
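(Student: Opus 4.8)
The plan is to split the left-hand side according to whether a given $I\in\cS$ with $I\subseteq L$ is contained in a single translated copy $I_k:=[9^k,9^k+1)$ of $[0,1)$ or not; the first kind is handled by the rescaled local testing conditions \eqref{rescaled local testing conditions} already established for $\wt{w}_k,\sigma_k$, and the second kind (the ``crossing'' intervals) by a direct summation that exploits the enormous separation of the $I_k$'s together with the decay of the masses $\wt{w}(I_k)=k^{-r}$ and $\sigma(I_k)\sim_p3^{-k}$. As usual one may take $\cS$ finite and discard those $I\subseteq L$ with $\wt{w}(I)=0$ (equivalently $\sigma(I)=0$), which contribute nothing.

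A few elementary consequences of Section \ref{s: reguera-thiele} would be recorded first. By translation invariance, \eqref{rescaled local testing conditions} holds on each $I_k$ with constants independent of $k$. Since every $I(J)$ with $J\in\mathbf{J}$ lies in $[1/3-3^{-k},2/3+3^{-k}]\subseteq[1/4,3/4]$, the weights $\wt{w}_k,\sigma_k$ are supported in $[9^k+1/4,9^k+3/4]$; in particular any interval $I$ with $\wt{w}(I)>0$ not contained in a single $I_k$ satisfies $|I|\ge1/4$ (it either sticks out of some $I_k$ while still meeting its middle third, or else spans an entire gap between two pieces). Combining the support property with $\sum_kk^{-r}<\infty$, $\sum_k\sigma(I_k)<\infty$, $|I_k|=1$, and Remark \ref{A_p} together with the normalization $\wt{w}_k=k^{-r}w_k$, one gets a global bound $(\La\wt{w}\Ra\ci{I})^{p-1}\La\sigma\Ra\ci{I}\lesssim_p1$ (and the dual $(\La\sigma\Ra\ci{I})^{p'-1}\La\wt{w}\Ra\ci{I}\lesssim_p1$) for all intervals $I$ in $\R$, as well as the sharper localized estimate $(\La\wt{w}\Ra\ci{S})^{p-1}\La\sigma\Ra\ci{S}\lesssim_pk^{-r(p-1)}$ for subintervals $S$ of $I_k$.

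For intervals contained in some $I_k$: for each $k$ the family $\{I\in\cS:I\subseteq L\cap I_k\}$, translated by $-9^k$, is a martingale $\e$-sparse family of subintervals of $[0,1)$ contained in $(L\cap I_k)-9^k$, so \eqref{rescaled local testing conditions} bounds its contribution by $\lesssim_p\wt{w}(L\cap I_k)/(1-\e)$; summing over the pairwise disjoint pieces and using $\sum_k\wt{w}(L\cap I_k)=\wt{w}(L)$ gives the required bound, and dually. For a crossing interval $I$ I would attach the least index $\alpha=\alpha(I)$ and the greatest index $\beta=\beta(I)$ with $\wt{w}(I\cap I_k)>0$, and treat separately the cases $\beta\ge\alpha+2$, $\beta=\alpha+1$, and the degenerate case $\beta=\alpha$. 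In the first two cases all crossing intervals with a given pair $(\alpha,\beta)$ contain the fixed interval $[9^\alpha+3/4,9^\beta+1/4]$, hence form a chain whose members have $|I|\gtrsim9^\beta$; writing the summand $(\La\wt{w}\Ra\ci{I})^p\La\sigma\Ra\ci{I}|I|=\wt{w}(I)\cdot\wt{w}(I)^{p-1}\sigma(I)/|I|^p$, bounding $\wt{w}(I)\le\wt{w}(L)$ and $\wt{w}(I)^{p-1}\sigma(I)$ uniformly over the chain (by $(\sum_{k\ge\alpha}k^{-r})^{p-1}(\sum_{k\ge\alpha}\sigma(I_k))\lesssim_{p,r}\alpha^{(1-r)(p-1)}3^{-\alpha}$), and applying \eqref{estimate_chain} to $\sum1/|I|^p$, one is left with a convergent double series over $(\alpha,\beta)$ which the factors $9^{-\beta p}$ and $3^{-\alpha}$ make $\lesssim_{p,r}\wt{w}(L)/(1-\e)$. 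In the degenerate case, for fixed $\alpha$ the crossing intervals split, according to which endpoint of $I_\alpha$ their trace $S:=I\cap I_\alpha$ abuts, into at most two chains; since $\wt{w}(I)=\wt{w}(S)$, $\sigma(I)=\sigma(S)$ and $|I|\ge|S|$, one has $(\La\wt{w}\Ra\ci{I})^p\La\sigma\Ra\ci{I}|I|\le\wt{w}(S_1)^p\sigma(S_1)/|I|^p$ where $S_1$ is the trace of the largest interval of the chain, so \eqref{estimate_chain} and the localized estimate $(\La\wt{w}\Ra\ci{S_1})^{p-1}\La\sigma\Ra\ci{S_1}\lesssim_p\alpha^{-r(p-1)}$ give a contribution $\lesssim_p\alpha^{-r(p-1)}\wt{w}(L)/(1-\e)$ per $\alpha$, and $\sum_{\alpha\ge4000}\alpha^{-r(p-1)}<\infty$ exactly because $r(p-1)>1$. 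Collecting the three cases yields the crossing contribution $\lesssim_{p,r}\wt{w}(L)/(1-\e)$, and the dual estimate is entirely parallel.

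The hard part is the degenerate crossing intervals: a single one may carry an arbitrarily small sliver of the mass of $\wt{w}$ while many of them lie nested in $L$, so bounding the number of intervals in a chain would lose a factor growing in $\alpha$ (potentially even a factor $\log|L|$). The remedy is not to count intervals at all, but to combine the geometric decay \eqref{estimate_chain} of $\sum1/|I|^p$ along a chain with the monotonicity of $\wt{w}(S)^p\sigma(S)$ in $S$, and then to notice that the leftover series $\sum_\alpha\alpha^{-r(p-1)}$ converges precisely under the standing hypothesis $r>\max(1,1/(p-1))$ — which is exactly why the weight $w_k$ was rescaled by $k^{-r}$ in the first place.
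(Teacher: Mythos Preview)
Your argument is correct, but it follows a genuinely different route from the paper's.

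Both proofs treat the intervals $I\subseteq L\cap I_k$ identically, via the rescaled local testing conditions \eqref{rescaled local testing conditions}. The difference lies in the ``crossing'' intervals. The paper splits these by size: for $|I|\le2$ (so $I$ meets exactly one $I_k$, your degenerate case $\beta=\alpha$) it invokes Lemma~\ref{comparison_contain_endpoint} together with the Carleson-embedding Lemma~\ref{restricted_sparse_packing}, obtaining the sharp bound $\wt{w}(L\cap I_k)/(1-\e)$ for each $k$ before summing in $k$; for $|I|>2$ it uses a dyadic length decomposition $2^m<|I|\le 2^{m+1}$ and the crude average estimate $\La\wt w\Ra\ci{I},\La\sigma\Ra\ci{I}\lesssim_p m2^{-m}$. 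You instead index crossing intervals by the pair $(\alpha,\beta)$ of extreme mass-carrying indices and handle every case with chain estimates \eqref{estimate_chain}, never touching Lemma~\ref{restricted_sparse_packing}.

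What each approach buys: the paper's treatment of the degenerate case does not use the rescaling at all (it would work with $w_k$ in place of $\wt w_k$), so it isolates the role of the factor $k^{-r}$ to the local testing step only. Your treatment is more elementary in that it avoids the Carleson embedding, but it leans on the convergence of $\sum_{\alpha}\alpha^{-r(p-1)}$, which is exactly the condition $r>1/(p-1)$; in that sense your proof makes the rationale for the choice of $r$ visible a second time. One small point worth making explicit: in the degenerate case the split into ``at most two chains'' works because any crossing $I$ with $\alpha(I)=\beta(I)=\alpha$ must contain either $9^\alpha$ or $9^\alpha+3/4$ (it sticks out of $I_\alpha$ yet meets the support, which lies in $[9^\alpha+1/4,9^\alpha+3/4]$), and intervals in a grid sharing a common point are nested.
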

\begin{proof}
Clearly
\begin{equation*}
\sum_{\substack{I\in\mathcal{S}\\I\subseteq L}}(\La\wt{w}\Ra\ci{I})^{p}\La\sigma\Ra\ci{I}|I|=\sum_{i=1}^{2}\sum_{\substack{I\in\cS^{i}\\I\subseteq L}}(\La\wt{w}\Ra\ci{I})^{p}\La\sigma\Ra\ci{I}|I|,
\end{equation*}
where
\begin{equation*}
\cS^1:=\left\lbrace I\in\cS:~ I\subseteq L,~ |I|\leq 2, ~I\cap\left(\bigcup_{k=4000}^{\infty}I_{k}\right)\neq\emptyset\right\rbrace,~\cS^2:=\lbrace I\in\cS:~ I\subseteq L,~ |I|>2\rbrace.
\end{equation*}
Notice that for all $I\in\cS^1$ there is a unique $k\geq 4000$ such that $I\cap I_{k}\neq\emptyset$.

Let $k\geq 4000$. We have
\begin{equation*}
\sum_{\substack{I\in\mathcal{S}^1\\I\cap I_{k}\neq\emptyset}}(\La\wt{w}\Ra\ci{I})^{p}\La\sigma\Ra\ci{I}|I|=\sum_{\substack{I\in\mathcal{S}\\ I\subseteq L\cap I_{k}}}(\La\wt{w}\Ra\ci{I})^{p}\La\sigma\Ra\ci{I}|I|+\sum_{\substack{I\in\mathcal{S}^1\\
I\cap I_{k}\neq\emptyset,~I\nsubseteq I_{k}}}(\La\wt{w}\Ra\ci{I})^{p}\La\sigma\Ra\ci{I}|I|.
\end{equation*}
By the first estimate in  \eqref{rescaled local testing conditions} coupled with translation invariance we obtain
\begin{equation*}
\sum_{\substack{I\in\mathcal{S}\\I\subseteq L\cap I_{k}}}(\La\wt{w}\Ra\ci{I})^{p}\La\sigma\Ra\ci{I}|I|\lesssim_{p}\frac{\wt{w}(L\cap I_{k})}{1-\e}.
\end{equation*}
Moreover, if $I\in\cS^1$ is such that $I\cap I_{k}\neq\emptyset$ and $I\nsubseteq I_{k}$, then the intervals $I\cap I_{k},L\cap I_{k}$ and $I_{k}$ share an endpoint and $I\cap I_{k}\subseteq L\cap I_{k}$, therefore by Remark \ref{A_p}, Lemma \ref{comparison_contain_endpoint} and Lemma \ref{restricted_sparse_packing} we obtain
\begin{align*}
\sum_{\substack{I\in\mathcal{S}\\I\subseteq L\\
I\cap I_{k}\neq\emptyset,~I\nsubseteq I_{k}}}(\La\wt{w}\Ra\ci{I})^{p}\La\sigma\Ra\ci{I}|I|&=
\sum_{\substack{I\in\mathcal{S}\\I\subseteq L\\
I\cap I_{k}\neq\emptyset,~I\nsubseteq I_{k}}}\left(\frac{|I\cap I_{k}|}{|I|}\right)^{p+1}(\La\wt{w}\Ra\ci{I\cap I_k})^{p}\La\sigma\Ra\ci{I\cap I_k}|I|\\
&\lesssim_{p}\sum_{\substack{I\in\mathcal{S}\\I\subseteq L\\
I\cap I_{k}\neq\emptyset,~I\nsubseteq I_{k}}}\La \wt{w}\Ra\ci{I\cap I_{k}}\left(\frac{|I\cap I_{k}|}{|I|}\right)^{p+1}|I|\lesssim_{p}\La\wt{w}\Ra\ci{L\cap I_{k}}\frac{|L\cap I_{k}|}{1-\e}=\frac{\wt{w}(L\cap I_{k})}{1-\e}.
\end{align*}
It follows that
\begin{align*}
\sum_{\substack{I\in\mathcal{S}^1}}(\La\wt{w}\Ra\ci{I})^{p}\La\sigma\Ra\ci{I}|I|=\sum_{k=4000}^{\infty}\sum_{\substack{I\in\mathcal{S}^1\\I\cap I_{k}\neq\emptyset}}(\La\wt{w}\Ra\ci{I})^{p}\La\sigma\Ra\ci{I}|I|\lesssim_{p}\sum_{k=4000}^{\infty}\frac{\wt{w}(L\cap I_{k})}{1-\e}=\frac{\wt{w}(L)}{1-\e}.
\end{align*}
Let now $m$ be any positive integer. Set $\cS^{2}_{m}:=\lbrace I\in\cS^2:~2^{m}< |I|\leq 2^{m+1}\rbrace$. It is easy to see that for all $I\in\cS^2_{m}$, one has $\#\lbrace k\geq 4000:~I\cap I_{k}\neq\emptyset\rbrace\leq 2m$, thus $\La\wt{w}\Ra\ci{I},\La\sigma\Ra\ci{I}\lesssim_{p}m2^{-m}$. Moreover, since $2^{m}< |I|\leq2^{m+1}$, for all $I\in\cS^{2}_{m}$, similarly to the proof of Lemma \ref{partial_testing_w} (a) we obtain
\begin{equation*}
\sum_{I\in\cS^{2}_{m}}\wt{w}(I)\lesssim\frac{\wt{w}(L)}{1-\e}.
\end{equation*}
Thus, we have
\begin{align*}
\sum_{I\in\cS^2}(\La\wt{w}\Ra\ci{I})^{p}\La\sigma\Ra\ci{I}|I|=\sum_{m=1}^{\infty}\sum_{I\in\cS^{2}_{m}}(\La\wt{w}\Ra\ci{I})^{p}\La\sigma\Ra\ci{I}|I|\lesssim_{p}
\sum_{m=1}^{\infty}m^{p}2^{-mp}\sum_{I\in\cS^{2}_{m}}\wt{w}(I)\lesssim_{p}\frac{\wt{w}(L)}{1-\e},
\end{align*}
concluding the proof of the first estimate. The second one is proved similarly.
\end{proof}

\section{Investigating separated bump conditions} \label{s: entropy bumps}

Throughout this section we assume $p=2$. By interval we mean a subset of $\R$ of the form $[a,b)$, where $a,b\in\R$, $a<b$.

\subsection{Lorentz spaces} \label{s: lorentz norms} We record here well-known facts about Lorentz spaces.

A function $\phi:[0,\infty)\rightarrow[0,\infty)$ is said to be \emph{quasiconcave} if
\begin{itemize}
\item[(a)] $\phi$ is increasing

\item[(b)] $\phi(s)=0$ is and only if $s=0$, for all $s\in[0,\infty)$

\item[(c)] the function $t\mapsto \phi(t)/t$ is decreasing on $(0,\infty)$.
\end{itemize}
As noted in \cite[Chapter 2, Corollary 5.3]{interpolation}, combining properties (a) and (c) we deduce that every quasiconcave function is continuous at each point of $(0,\infty)$. It is also proved in \cite[Chapter 2, Proposition 5.10]{interpolation} that every quasiconcave function $\phi:[0,\infty)\rightarrow[0,\infty)$ admits a least concave majorant $\wt{\phi}:[0,\infty)\rightarrow[0,\infty)$ satisfying
\begin{equation}
\label{least concave majorant}
\frac{1}{2}\wt{\phi}(x)\leq\phi(x)\leq\wt{\phi}(x),\qquad\forall x\in(0,\infty).
\end{equation}
It is not hard to see that $\wt{\phi}$ is also quasiconcave.

Fix now a quasiconcave function $\phi:[0,\infty)\rightarrow[0,\infty)$ and a non-atomic probability space $(X,\mu)$. We define for all measurable functions $f$ on $X$ the \textit{distribution function} $N_{f}$ of $f$ by
\begin{equation*}
N_{f}(t):=\mu(\lbrace|f|>t\rbrace),~0\leq t<\infty,
\end{equation*}
and the \textit{decreasing rearrangement} $f^{\ast}$ of $f$ by
\begin{equation*}
f^{\ast}(t):=\inf\lbrace s\in[0,\infty):N_{f}(s)\leq t\rbrace,~0\leq t\leq 1.
\end{equation*}
Following \cite[Chapter 2, Definition 5.12]{interpolation}, we define the \textit{Lorentz space} $\Lambda_{\phi}(X,\mu)$ as the space of all measurable functions $f$ on $X$ for which
\begin{equation}
\label{Lorentz norm}
\Vert f\Vert\ci{\Lambda_{\phi}(X,\mu)}:=\int_{[0,1]}f^{\ast}(s)d\phi(s)<\infty,
\end{equation}
where the integral is to be understood in the Lebesgue-Stieltjes sense. It is proved in \cite[Chapter 2., Theorem 5.13]{interpolation} that if $\phi$ is concave, then $\Vert\fdot\Vert\ci{\Lambda_{\phi}(X,\mu)}$ defines a norm on the space $\Lambda_{\phi}(X,\mu)$.

It is noted in \cite[Section 3, equation (3.3)]{entropy}  that the change of variables $s=N_{f}(t)$ and integration by parts show that that one can rewrite $\Vert f\Vert\ci{\Lambda_{\phi}(X,\mu)}$ as
\begin{equation}
\label{computation Lorentz}
\Vert f\Vert\ci{\Lambda_{\phi}(X,\mu)}=\int_{[0,\infty)}\phi(N_{f}(t))dt.
\end{equation}
In fact, using just the facts that $\phi(0)=0$ and that $\phi$ is left-continuous one can prove \eqref{computation Lorentz} by direct computation in the case that $N_{f}$ is a step function, and then the general case follows by approximating $N_{f}$ by an increasing sequence of right-continuous decreasing step functions. This way of writing Lorentz quasinorms is more useful for explicit computations.

In the special case that $(X,\mu)$ coincides with $(I,dx/|I|)$, where $I$ is an interval in $\R$ and $dx/|I|$ normalized Lebesgue measure on $I$, we denote $\Lambda_{\phi}(X,\mu)$ by $\Lambda_{\phi}(I)$.

\subsection{$L\log L$ space}\label{s: Llog L} We record here well-known facts about the space $L\log L$.

Let $(X,\mu)$ be a non-atomic probability space. Consider the continuous, strictly increasing, and concave function $\phi_0:[0,1]\rightarrow[0,\infty)$ given by
\begin{equation*}
\phi_0(s):=s(1-\log s),~0<s\leq 1,\qquad \phi_0(0)=0.
\end{equation*}
Consider also the Young function $\Phi_0(t):=t(\log t)^{+}$, $0\leq t<\infty$. The Orlicz space $L^{\Phi_{0}}(X,\mu)$ is denoted by $L\log L(X,\mu)$. It is proved in \cite[Chapter 4, Section 8]{interpolation} that the spaces $\Lambda_{\phi_0}(X,\mu)$ and $L\log L(X,\mu)$ coincide, and that the Lorentz norm $\Vert\fdot\Vert\ci{\Lambda_{\phi_0}(X,\mu)}$ and the Orlicz norm $\Vert\fdot\Vert\ci{L\log L(X,\mu)}$ are equivalent. Note that in the special case that $(X,\mu)$ coincides with $(I,dx/|I|)$, where $I$ is an interval in $\R$ and $dx/|I|$ normalized Lebesgue measure on $I$, the norms $\Vert\fdot\Vert\ci{L\log L(I)}$ and $\Vert\fdot\Vert\ci{\Lambda_{\phi_0}(I)}$ are equivalent with constants not depending on $I$, due to translation and rescaling invariance. Moreover, if $M\ci{I}$ denotes the Hardy--Littlewood maximal function adapted to $I$, that is
\begin{equation*}
M\ci{I}f:=\sup_{J}\La|f|\Ra\ci{J}1\ci{J},
\end{equation*}
where supremum is taken over all subintervals $J$ of $I$, then there holds
\begin{equation*}
\Vert f\Vert\ci{L\log L(I)}\sim\La M\ci{I}f\Ra\ci{I}.
\end{equation*}
In what follows, we denote $\Vert\fdot\Vert\ci{\Lambda_{\phi_0}(I)}$ by $\Vert\fdot\Vert\ci{I}^{\ast}$, for all intervals $I$.

\subsection{Comparison principles between Orlicz bumps and Lorentz bumps}\label{s: comparison principles}

We record here three principles of comparison between Orlicz bumps and Lorentz bumps, which allow us to reduce estimates for Orlicz bumps to estimates for Lorentz bumps, thus greatly simplifying computations.

\subsubsection{Estimating Orlicz bumps from below}

Let $\Phi$ be a Young function with $\int_{c}^{\infty}\frac{1}{\Phi(t)}dt<\infty$ for some $c>0$. Treil and Volberg prove in \cite[Lemma 3.4]{entropy} that $\Phi$ must then satisfy $\Phi(t)\geq a\cdot t\log t$ for all sufficiently large $t$, for some $a>0$. In fact, if we just assume tha $\Phi:[0,\infty)\rightarrow[0,\infty)$ is a function such that the function $t\mapsto\Phi(t)/t$ is increasing in $(0,\infty)$ and $\int_{c}^{\infty}\frac{1}{\Phi(t)}dt<\infty$ for some $c>1$, then adapting the proof of \cite[Lemma 3.4]{entropy} we have for all $t\geq c^2$
\begin{equation}
\label{domination of entropy}
A:=\int_{c}^{\infty}\frac{1}{\Phi(s)}ds\geq\int_{c}^{t}\frac{s}{s\Phi(s)}ds\geq\frac{t}{\Phi(t)}\int_{c}^{t}\frac{1}{s}ds=\frac{t(\log t-\log c)}{\Phi(t)}\geq\frac{t\log t}{2\Phi(t)},
\end{equation}
and therefore $\Phi(t)\geq\frac{1}{2A}t\log t$ (a slightly more careful variant of this argument shows that actually $\lim_{t\rightarrow\infty}\frac{\Phi(t)}{t\log t}=\infty$). It then follows from the facts about the $L\log L$ space recorded in Subsection \ref{s: Llog L} that
\begin{equation*}
\Vert f\Vert\ci{I}^{\ast}\lesssim\ci{\Phi}\Vert f\Vert\ci{L^{\Phi}(I)},
\end{equation*}
for all measurable functions $f\geq0$ on $I$, for all intervals $I$.

Treil--Volberg \cite{entropy} have shown that if the Young function $\Phi$ possesses mild additional regularity, then more can be said. Namely, assume that
\begin{itemize}
\item $\Phi$ is doubling, i.e.  there exists a positive constant $C$ such that $\Phi(2t)\leq C\Phi(t)$, for all $t>0$

\item the function $t\mapsto \Phi(t)/(t\log t)$ is increasing for sufficiently large $t$.
\end{itemize}
As Treil--Volberg point out in \cite{entropy},  these additional regularity assumptions are satisfied when $\Phi$ is a standard logarithmic bound of the form (for sufficiently large $t$)
\begin{equation*}
\Phi(t)=t(\log t)(\log^{(2)}t)\fdot\ldots\fdot(\log^{(n-1)} t)(\log^{(n)}t)^{1+\e},
\end{equation*}
for some $\e>0$ and some positive integer $n$, where $\log^{(k)} t$ denotes $k$-fold composition of $\log$ with itself.

The following comparison principle between Orlicz bumps and ``penalized'' entropy bumps is established by Treil and Volberg in \cite{entropy}. Recall that $\phi_0(s):=s(1-\log s)$, for all $s\in(0,1]$ and $\phi_0(0)=0$.

\begin{lm}[Treil--Volberg \cite{entropy}]\label{l: entropy comparison}
Let $(X,\mu)$ be a nonatomic probability space. Assume that the Young function $\Phi$ satisfies the above integrability and regularity conditions. Then, there exists a function $\alpha:[1,\infty)\rightarrow(0,\infty)$ (depending only on $\Phi$), such that the function $t\mapsto t\alpha(t)$ is increasing and
\begin{equation*}
\int_{1}^{\infty}\frac{1}{t\alpha(t)}dt<\infty,
\end{equation*}
satisfying
\begin{equation*}
\alpha\left(\frac{\Vert f\Vert\ci{\Lambda_{\phi_0}(X,\mu)}}{\Vert f\Vert\ci{L^1(\mu)}}\right)\Vert f\Vert\ci{\Lambda_{\phi_0}(X,\mu)}\leq\Vert f\Vert\ci{L^{\Phi}(X,\mu)},
\end{equation*}
for all measurable functions $f\geq0$ on $X$ that are positive on a set of positive measure.
\end{lm}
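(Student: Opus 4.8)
The plan is to reduce everything to a one-dimensional inequality for decreasing rearrangements, and to \emph{construct} $\alpha$ from $\Phi$ so that characteristic functions are the model case. By homogeneity it suffices to fix $g\geq0$ with $\int_X\Phi(|g|)\,d\mu\leq1$ and prove $\alpha(R)\,\|g\|_{\Lambda_{\phi_0}(X,\mu)}\leq1$, where $R:=\|g\|_{\Lambda_{\phi_0}}/\|g\|_{L^1}$. Since $(X,\mu)$ is non-atomic of total mass $1$, all quantities depend only on $g^{\ast}$, so I would pass to $(0,1)$ with Lebesgue measure, where $\|g\|_{\Lambda_{\phi_0}}=\int_0^1 g^{\ast}(s)(-\log s)\,ds$ and $\|g\|_{L^1}=\int_0^1 g^{\ast}(s)\,ds$; a Chebyshev-type inequality (using that $g^{\ast}$ is decreasing and $\int_0^1(-\log s-1)\,ds=0$) gives $R\geq1$, so $\alpha(R)$ makes sense. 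By \eqref{domination of entropy} one has $\Phi(t)\gtrsim t\log t$ for large $t$, so $\Phi^{-1}$ is available there; I would then set, for $R$ large,
\begin{equation*}
\alpha(R):=\frac{1}{\big\|\Phi^{-1}(e^{R-1})\,\mathbf 1_{(0,e^{1-R})}\big\|_{\Lambda_{\phi_0}}}=\frac{e^{R-1}}{R\,\Phi^{-1}(e^{R-1})},
\end{equation*}
and a positive constant for $R$ near $1$ (chosen so that $t\mapsto t\alpha(t)$ stays non-decreasing). Here $\Phi^{-1}(e^{R-1})\mathbf 1_{(0,e^{1-R})}$ is precisely the characteristic function normalised so that $\int\Phi=1$ and with $\|\,\cdot\,\|_{\Lambda_{\phi_0}}/\|\,\cdot\,\|_{L^1}=R$.

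To verify the structural properties of $\alpha$, write $\Phi(\tau)=e^{R-1}$, i.e.\ $\tau=\Phi^{-1}(e^{R-1})$. Then $R\alpha(R)=\Phi(\tau)/\tau$, which is increasing because $\tau$ increases with $R$ and $\Phi(\tau)/\tau$ increases with $\tau$ (Young function). For the integrability, $\tfrac{1}{t\alpha(t)}=\tfrac{\Phi^{-1}(e^{t-1})}{e^{t-1}}$, and the substitution $y=e^{t-1}$ turns $\int^{\infty}\tfrac{dt}{t\alpha(t)}$ into $\int^{\infty}\tfrac{\Phi^{-1}(y)}{y^{2}}\,dy$, which by the integration by parts $\int\tfrac{\tau\Phi'(\tau)}{\Phi(\tau)^{2}}\,d\tau=-\tfrac{\tau}{\Phi(\tau)}+\int\tfrac{d\tau}{\Phi(\tau)}$ equals a finite boundary term plus a constant multiple of $\int^{\infty}\tfrac{d\tau}{\Phi(\tau)}<\infty$. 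The doubling of $\Phi$ is used precisely here: it forces $\Phi(\tau)\lesssim\tau^{C}$, hence $\log\tau\asymp R$, so $\alpha(R)\asymp\Phi(\tau)/(\tau\log\tau)$ with $\log\tau\asymp R$; thus $\alpha$ is essentially a rescaling of $t\mapsto\Phi(t)/(t\log t)$, and the monotonicity hypothesis on the latter makes $\alpha$ (and $t\alpha(t)$) well-behaved.

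The heart of the proof is the bound $\alpha(R)\|g\|_{\Lambda_{\phi_0}}\leq1$, i.e.: among decreasing $g^{\ast}$ on $(0,1)$ with $\int_0^1\Phi(g^{\ast})\leq1$ and with $\int_0^1 g^{\ast}(s)(-\log s)\,ds=R\int_0^1 g^{\ast}(s)\,ds$, the functional $\int_0^1 g^{\ast}(s)(-\log s)\,ds$ must be dominated, up to a universal constant, by $1/\alpha(R)=\|\Phi^{-1}(e^{R-1})\mathbf 1_{(0,e^{1-R})}\|_{\Lambda_{\phi_0}}$. Since the objective and one constraint are linear while $\int\Phi(g^{\ast})\leq1$ is convex, I would localise to the Euler--Lagrange extremal, which I expect to have the form $g^{\ast}(s)=(\Phi')^{-1}\!\big(\kappa(-\log s)-\lambda\big)_{+}$ for multipliers $\kappa>0,\lambda$, so that its level sets are intervals $(0,s_{0})$, and then estimate its $\Lambda_{\phi_0}$-norm using the $\int\Phi=1$ normalisation together with the convexity and doubling of $\Phi$. \textbf{This last estimate is the main obstacle.} The naive route --- bounding $\int_0^s g^{\ast}\leq s\,\Phi^{-1}(1/s)$ scale by scale and splitting at $s\sim e^{-R}$ --- introduces a spurious tail term of order $\int_{\Phi^{-1}(e^{R})}^{\infty}\tfrac{d\tau}{\Phi(\tau)}$, which is too large when $\int\tfrac{d\tau}{\Phi(\tau)}$ converges slowly (e.g.\ $\Phi(t)=t\log t(\log\log t)^{1+\delta}$ with $\delta$ small); one must instead use the two constraints simultaneously and exploit convexity and doubling of $\Phi$ rather than estimating at each scale separately. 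This is exactly where the Treil--Volberg regularity hypotheses genuinely enter, and it is the part requiring real work.
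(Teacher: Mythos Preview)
The paper does not actually prove this lemma: immediately after the statement it writes ``We refer to \cite{entropy} for the proof of Lemma \ref{l: entropy comparison}.'' So there is no ``paper's own proof'' to compare against; the result is quoted from Treil--Volberg and used as a black box.

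That said, your write-up is not a proof but a proof \emph{sketch with a declared gap}. Your reductions and the definition of $\alpha$ are sound: passing to $g^{\ast}$ on $(0,1)$ is legitimate by non-atomicity; the Chebyshev argument for $R\geq1$ is correct since $g^{\ast}$ and $-\log s$ are both decreasing and $\int_0^1(-\log s)\,ds=1$; the computation $R\alpha(R)=\Phi(\tau)/\tau$ with $\tau=\Phi^{-1}(e^{R-1})$ does give monotonicity; and the change of variables reducing $\int\tfrac{dt}{t\alpha(t)}$ to $\int\tfrac{d\tau}{\Phi(\tau)}$ is right. The identification $\alpha(R)\asymp\Phi(\tau)/(\tau\log\tau)$ via doubling is also the right heuristic and matches the function Treil--Volberg actually use.

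The genuine gap is exactly the one you flag: you have not proved that the characteristic function is (up to a constant) the extremiser for $\|g\|_{\Lambda_{\phi_0}}$ under the two constraints $\int\Phi(g^{\ast})\leq1$ and $\|g\|_{\Lambda_{\phi_0}}=R\|g\|_{L^1}$. Invoking Euler--Lagrange to guess the shape of the extremal is not a substitute for an a priori bound, and you correctly observe that the scale-by-scale estimate $\int_0^s g^{\ast}\leq s\,\Phi^{-1}(1/s)$ loses the ratio constraint and produces a tail $\int_{\Phi^{-1}(e^{R})}^{\infty}\tfrac{d\tau}{\Phi(\tau)}$ that need not be $O(1/\alpha(R))$ for slowly-converging $\Phi$. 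Until that extremal estimate is actually carried out (which is precisely the content of the Treil--Volberg argument and where the hypothesis that $t\mapsto\Phi(t)/(t\log t)$ is increasing does real work), the proposal remains incomplete. If you want to finish this route, you will need to replace the variational heuristic by a direct comparison: split $g^{\ast}$ at the level $\Phi^{-1}(e^{R-1})$, control the high part using $\int\Phi(g^{\ast})\leq1$ together with the monotonicity of $\Phi(t)/(t\log t)$, and use the $L^1$-constraint encoded in $R$ to absorb the low part.
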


We refer to \cite{entropy} for the proof of Lemma \ref{l: entropy comparison}.

\subsubsection{Estimating Orlicz bumps from above}

 Let $(X,\mu)$ be a non-atomic probability space. Let $(\mathcal{X},\Vert\fdot\Vert\ci{\mathcal{X}})$ be a rearrangement invariant Banach function space on $(X,\mu)$.  Then, the fundamental function of $\mathcal{X}$ is defined to be the function $\phi\ci{\mathcal{X}}:[0,\infty)\rightarrow[0,\infty)$ given by
\begin{equation*}
\phi\ci{\mathcal{X}}(t):=\Vert 1\ci{E_{t}}\Vert\ci{\mathcal{X}},
\end{equation*}
where $E_t$ is any measurable subset of $X$ such that $\mu(E_t)=t$, for all $t\in[0,1]$, and $\phi\ci{\mathcal{X}}(t)=\phi\ci{\mathcal{X}}(1)$, for all $t\in(1,\infty)$. It is proved in \cite[Chapter 2, Corollary 5.3]{interpolation} that $\phi\ci{\mathcal{X}}$ is a quasiconcave function. Moreover, combining \eqref{least concave majorant}, \cite[Chapter 2, Proposition 5.11]{interpolation} and \cite[Chapter 2, Theorem 5.13]{interpolation} with expression \eqref{computation Lorentz} for Lorentz quasinorms we deduce that
\begin{equation}
\label{RIBFS vs Lorentz}
\Vert f\Vert\ci{\mathcal{X}}\leq2\Vert f\Vert\ci{\Lambda_{\phi\ci{\mathcal{X}}}(X,\mu)},
\end{equation}
for all measurable functions $f\geq 0$ on $X$.

Let now $\Phi$ be a Young function. It is proved in \cite[Chapter 4, Section 8]{interpolation} that the Orlicz space $L^{\Phi}(X,\mu)$ equipped with the Luxemburg norm $\Vert\fdot\Vert\ci{L^{\Phi}(X,\mu)}$ is a rearrangement-invariant Banach function space. Direct computation shows that the fundamental function $\phi$ of $L^{\Phi}(X,\mu)$ is given by
\begin{equation*}
\phi(s)=\frac{1}{\Phi^{-1}(1/s)},~0<s\leq1,\qquad\phi(0)=0,
\end{equation*}
where $\Phi^{-1}$ is the right-continuous inverse of $\Phi$ given by
\begin{equation*}
\Phi^{-1}(t):=\sup\lbrace s\in[0,\infty):~\Phi(s)\leq t\rbrace,\qquad0\leq t<\infty.
\end{equation*}
Moreover, by \eqref{RIBFS vs Lorentz} we obtain
\begin{equation}
\label{Orlicz dominated by Lorentz}
\Vert f\Vert\ci{L^{\Phi}(X,\mu)}\leq2\Vert f\Vert\ci{\Lambda_{\phi}(X,\mu)},
\end{equation}
for all measurable functions $f\geq 0$ on $X$. It is not hard to see that $\int_{c}^{\infty}\frac{1}{\Phi(t)}dt<\infty$ for some $c>0$ if and only if $\int_{0}^{1}\frac{1}{\phi(t)}dt<\infty$.

Another way to estimate Orlicz bumps from above using Lorentz bumps is provided by Nazarov, Reznikov, Treil and Volberg in the journal version of \cite{nazarov--reznikov--treil--volberg}.

\begin{lm}[Nazarov, Reznikov, Treil, Volberg \cite{nazarov--reznikov--treil--volberg}]\label{l: lorentz comparison}
Let $\Phi$ be a Young function such that
\begin{itemize}

\item $\int_{c}^{\infty}\frac{1}{\Phi(t)}dt<\infty$, for some $c>0$

\item $\Phi$ is doubling

\item one can write $\Phi(t)=t\rho(t)$, where $\rho:[0,\infty)\rightarrow[1,\infty)$ is a continuously differentiable and strictly increasing function such that $\lim_{t\rightarrow\infty}\rho(t)=\infty$, $t\mapsto t\rho'(t)/\rho(t)$ is decreasing for sufficiently large $t$, and
\begin{equation*}
\lim_{t\rightarrow\infty}\frac{t\rho'(t)}{\rho(t)}\log(\rho(t))=0.
\end{equation*}
\end{itemize}
Then, there exists a quasiconcave function $\psi$ on $[0,1]$ with $\int^{1}_{0}\frac{1}{\psi(s)}ds<\infty$ (depending only on $\Phi$), such that
\begin{equation*}
\Vert f\Vert\ci{L^{\Phi}(X,\mu)}\sim\ci{\Phi}\Vert f\Vert\ci{\Lambda_{\psi}(X,\mu)},
\end{equation*}
for every measurable function $f\geq 0$ on $I$. In particular, one can take (for sufficiently small $s\in(0,1)$) $\psi(s):=s\Psi(s)$, where $\Psi(s)$ is given implicitly by
\begin{equation*}
\Psi(s):=\Phi'(t)\qquad\text{ for }s=\frac{1}{\Phi(t)\Phi'(t)}.
\end{equation*}
\end{lm}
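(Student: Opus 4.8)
The plan is to reduce the asserted equivalence to the statement that $L^{\Phi}$ coincides, with comparable norms, with its own Lorentz endpoint space. Since $L^{\Phi}$ and the Lorentz spaces $\Lambda_{\phi}$ are rearrangement invariant, I would first pass to decreasing rearrangements: it suffices to prove $\Vert f\Vert\ci{L^{\Phi}(X,\mu)}\sim\ci{\Phi}\Vert f\Vert\ci{\Lambda_{\psi}(X,\mu)}$ for nonnegative decreasing $f=f^{\ast}$ on $(0,1)$ with Lebesgue measure, for the explicit $\psi$ named in the statement. Recall from the computation preceding \eqref{Orlicz dominated by Lorentz} that the fundamental function of $L^{\Phi}$ is $\phi_{\Phi}(s):=1/\Phi^{-1}(1/s)$, and from \eqref{computation Lorentz} that $\Vert f\Vert\ci{\Lambda_{\psi}}=\int_{[0,\infty)}\psi(N_{f}(t))\,dt$. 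I will argue that the proposed $\psi$ is quasiconcave with $\int_{0}^{1}\frac{ds}{\psi(s)}<\infty$ and, crucially, satisfies $\psi\sim\phi_{\Phi}$ on $[0,1]$; granting this, $\Lambda_{\psi}=\Lambda_{\phi_{\Phi}}$ with equivalent norms and the lemma reduces to $\Vert f\Vert\ci{L^{\Phi}}\sim\ci{\Phi}\Vert f\Vert\ci{\Lambda_{\phi_{\Phi}}}$.

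For the properties of $\psi$: since $t\mapsto\Phi(t)\Phi'(t)$ is strictly increasing where $\Phi$ is strictly convex, $s=\frac{1}{\Phi(t)\Phi'(t)}$ defines for small $s>0$ a strictly decreasing bijection $s\mapsto t(s)$ with $t(s)\to\infty$ as $s\to0^{+}$; along it $\Psi(s)=\Phi'(t(s))$ is decreasing, $\psi(s)=s\Psi(s)=\frac{1}{\Phi(t(s))}$ is increasing, and $\psi(s)/s=\Psi(s)$ is decreasing, so $\psi$ is quasiconcave near $0$ (extend it quasiconcavely to $[0,1]$ with $\psi(0)=0$). The same substitution gives $\int_{0}^{s_{0}}\frac{ds}{s\Psi(s)}=\int_{t(s_{0})}^{\infty}\frac{dt}{\Phi(t)}+\frac{1}{\Phi'(t(s_{0}))}$, which is finite by the integrability hypothesis on $\Phi$, so $\int_{0}^{1}\frac{ds}{\psi(s)}<\infty$. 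Finally, writing $\rho(t):=\Phi(t)/t$, the assumption $\lim_{t}\frac{t\rho'(t)}{\rho(t)}\log\rho(t)=0$ forces $\frac{t\rho'(t)}{\rho(t)}\to0$, hence $\Phi'\sim\rho$, and it also forces the strong slow variation $\rho(t\rho(t))\sim\rho(t)$; therefore $\Phi(t)\Phi'(t)\sim t\rho(t)^{2}\sim\Phi(t\rho(t))$, and since $\Phi^{-1}$ is automatically quasi-doubling (by convexity of $\Phi$) this yields $\Phi^{-1}(\Phi(t)\Phi'(t))\sim t\rho(t)=\Phi(t)$, i.e.\ $\phi_{\Phi}\big(\frac{1}{\Phi(t)\Phi'(t)}\big)\sim\frac{1}{\Phi(t)}=\psi\big(\frac{1}{\Phi(t)\Phi'(t)}\big)$, which is $\psi\sim\phi_{\Phi}$.

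It remains to prove $\Vert f\Vert\ci{L^{\Phi}}\sim\ci{\Phi}\Vert f\Vert\ci{\Lambda_{\phi_{\Phi}}}$. The inequality $\Vert f\Vert\ci{L^{\Phi}}\le2\Vert f\Vert\ci{\Lambda_{\phi_{\Phi}}}$ is exactly \eqref{Orlicz dominated by Lorentz} and needs nothing further. The reverse embedding $\Vert f\Vert\ci{\Lambda_{\phi_{\Phi}}}\lesssim\ci{\Phi}\Vert f\Vert\ci{L^{\Phi}}$ is the heart of the matter and is precisely where the doubling of $\Phi$ and the condition $\lim_{t}\frac{t\rho'(t)}{\rho(t)}\log\rho(t)=0$ are indispensable — it fails outright, e.g., for $\Phi(t)=t^{p}$ with $p>1$ (which also violates the hypotheses). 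Normalizing $\Vert f\Vert\ci{L^{\Phi}}=1$, so that $\int_{0}^{1}\Phi(f)\,ds\le1$, equivalently $\int_{0}^{\infty}\Phi'(t)N_{f}(t)\,dt\le1$, I must bound $\int_{0}^{\infty}\phi_{\Phi}(N_{f}(t))\,dt$ by a constant depending only on $\Phi$; the range $t\le\Phi^{-1}(2)$ contributes at most $\Phi^{-1}(2)\phi_{\Phi}(1)$, and for the tail I would decompose along the level sets $\{f>2^{j}\}$, combine the Chebyshev bound $N_{f}(t)\le1/\Phi(t)$ with the full strength of $\int\Phi(f)\le1$ (which controls the $\phi_{\Phi}$-weighted sums), and sum the resulting series. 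The decisive — and hardest — point is that under the stated regularity the logarithmic losses incurred in passing between $\Phi$, $\Phi'$ and $\Phi^{-1}$ are summable; this is also where the explicit shape $\psi(s)=s\Psi(s)$, $\Psi(s)=\Phi'(t)$ at $s=\frac{1}{\Phi(t)\Phi'(t)}$, emerges, as the balance point of the relevant optimization (equivalently, on the associate side, it identifies $L^{\wt{\Phi}}$ with the Marcinkiewicz space $M_{\psi}$).
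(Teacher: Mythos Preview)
The paper does not prove this lemma: immediately after stating it the paper writes ``We refer to the journal version of \cite{nazarov--reznikov--treil--volberg} for the proof of Lemma~\ref{l: lorentz comparison}.'' So there is no in-paper argument to compare against; any comparison would have to be with the cited source.

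On the merits of your outline: the reduction to showing $\psi\sim\phi_{\Phi}$ (the fundamental function of $L^{\Phi}$) is correct, and your verification of this via the slow-variation estimate $\rho(t\rho(t))\sim\rho(t)$ is sound --- that estimate does follow from the hypothesis $\frac{t\rho'(t)}{\rho(t)}\log\rho(t)\to0$ together with the eventual monotonicity of $t\mapsto t\rho'(t)/\rho(t)$, by integrating $\rho'/\rho$ over $[t,t\rho(t)]$. The bound $\Vert f\Vert_{L^{\Phi}}\le 2\Vert f\Vert_{\Lambda_{\phi_{\Phi}}}$ is indeed the general fact \eqref{Orlicz dominated by Lorentz}.

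The genuine gap is the reverse inequality $\Vert f\Vert_{\Lambda_{\phi_{\Phi}}}\lesssim_{\Phi}\Vert f\Vert_{L^{\Phi}}$, which you correctly identify as the heart of the matter (and as false for $\Phi(t)=t^{p}$). Here your write-up stops at a plan: ``I would decompose along the level sets $\{f>2^{j}\}$ \ldots'' and ``the decisive --- and hardest --- point is that \ldots\ the logarithmic losses \ldots\ are summable.'' No such summation is carried out, and nothing in the text shows \emph{how} the hypothesis on $\rho$ converts the a~priori information $\int_0^\infty \Phi'(t)N_f(t)\,dt\le 1$ into a bound on $\int_0^\infty \phi_{\Phi}(N_f(t))\,dt$. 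This is exactly the step where the specific regularity of $\Phi$ must be used quantitatively, and until it is written out (or replaced by a precise pointer to the argument in \cite{nazarov--reznikov--treil--volberg}), the proposal remains an outline rather than a proof.
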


We refer to the journal version of \cite{nazarov--reznikov--treil--volberg} for the proof of Lemma \ref{l: lorentz comparison}.

\subsection{Blow-up of separated bump conditions.}\label{s: entropy-Orlicz comparison}

Let $\Phi$ be any Young function such that $\int_{c}^{\infty}\frac{1}{\Phi(t)}dt<\infty$ for some $c>0$. Recall the weights $\wt{w},\sigma$ of Subsection \ref{s: main sparse estimates}. We will show that
\begin{equation*}
\sup_{I}\Vert \wt{w}\Vert\ci{L^{\Phi}(I)}\La\sigma\Ra\ci{I}=\infty,
\end{equation*}
where supremum ranges over all intervals in $\R$. Recalling the definitions of the weights $\wt{w},\sigma$ from Subsection \ref{s: main sparse estimates}, in view of translation invariance it suffices to prove that
\begin{equation*}
\lim_{k\rightarrow\infty}k^{-r}\sup_{I}\Vert w_k\Vert\ci{L^{\Phi}(I)}\La\sigma_k\Ra\ci{I}=\infty,
\end{equation*}
where the supremum inside the limit ranges over all subintervals $I$ of $[0,1)$, $r\in(1,2)$, and for any integer $k>3000$ we denote by $w_k,\sigma_k$ the weights of Section \ref{s: reguera-thiele} corresponding to this $k$ and $p=2$.

In view of the comparison principles in Subsection \ref{s: comparison principles}, it suffices to prove that one can find subintervals $R_k,~k>4000$ of $[0,1)$ such that
\begin{equation*}
\lim_{k\rightarrow\infty}k^{-r}\Vert w_k\Vert^{\ast}\ci{R_k}\La\sigma_k\Ra\ci{R_k}=\infty.
\end{equation*}
Clearly, it suffices to show for all $k>4000$, one can find a subinterval $R_k$ of $[0,1)$ such that
\begin{equation*}
\Vert w_{k}\Vert\ci{R_k}^{\ast}\gtrsim 3^{k}\La w_{k}\Ra\ci{R_k},\qquad\La w_{k}\Ra\ci{R_k}\La\sigma_{k}\Ra\ci{R_k}\gtrsim 1.
\end{equation*}
In Subsection \ref{s: lorentz norm computations} we prove the following estimate.

\begin{lm}\label{l: entropy computation}
There holds
\begin{equation*}
\Vert w_{k}\Vert\ci{K}^{\ast}\sim3^{k}\La w_{k}\Ra\ci{K},~\forall K\in\mathbf{K}.
\end{equation*}
\end{lm}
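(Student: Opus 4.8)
The plan is to compute the Lorentz $\Lambda_{\phi_0}$-quasinorm of $w_k$ on an interval $K\in\mathbf K$ directly, using the formula $\Vert f\Vert_{\Lambda_{\phi_0}(I)}=\int_{[0,\infty)}\phi_0(N_f(t))\,dt$ from \eqref{computation Lorentz}, where $N_f$ is computed with respect to normalized Lebesgue measure on $I$. First I would fix $K\in\mathbf K_i$ and describe the level sets of $w_k$ inside $K$ explicitly. By Remark \ref{r: vanish} the weight $w_k$ is supported (within $K$) on $J\cup I(J)$, $J:=K^m$, and on $I(J)$ it is the constant $(3^k/(3^{k-1}+1))^i=\La w_k\Ra_K$ while on $J$ it takes the values $\La w_k\Ra_{J'}$ over triadic grandchildren $J'\in\mathbf J_{i+1},\ldots$ of $K$, which form a geometrically increasing sequence of values on geometrically decreasing sets. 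Concretely, by Lemma \ref{average_estimates_triadic}(a), for each level $l\ge 1$ there are $\#\mathbf J_{i+l}$-many intervals of the next generation sitting inside $K$, carrying value $\La w_k\Ra_K\cdot(3^k/(3^{k-1}+1))^l$ on a total measure (relative to $K$) comparable to $3^{-(l-1)(k-1)}\cdot 3^{-k}\cdot|J|/|K|\sim 3^{-lk}$.

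The main computation is then to sum the contributions. Writing $\lambda:=3^k/(3^{k-1}+1)\in(2,3)$ and $c:=\La w_k\Ra_K$, the distribution function $N_{w_k}$ (normalized on $K$) is, up to constants, a step function whose value $\sim 3^{-lk}$ is attained on the interval of heights $(c\lambda^{l-1}, c\lambda^{l}]$, plus the contribution of $I(J)$ which adds a jump of size $|I(J)|/|K|=3^{-k}$ at the height level $c$. Feeding this into $\int\phi_0(N_{w_k}(t))\,dt=\int_0^\infty \phi_0(N_{w_k}(t))\,dt$ and using $\phi_0(s)=s(1-\log s)$, the dominant term comes from the smallest heights, i.e. from $I(J)$ and the first few levels of $J$: there $N_{w_k}(t)\sim 3^{-k}$ over a $t$-range of length $\sim c$, contributing $\sim c\cdot\phi_0(3^{-k})=c\cdot 3^{-k}(1+k\log 3)\sim c\cdot k 3^{-k}$. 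Hmm — I should double-check the exponent: one needs the sum over levels $l$ of (range of $t$ at level $l$) $\times \phi_0(3^{-lk})$, which is $\sum_l c\lambda^{l-1}(\lambda-1)\cdot 3^{-lk}(1+lk\log 3)$; since $\lambda 3^{-k}\sim 3^{-k}\cdot 3 <1$ this geometric-type sum is dominated by its first term and equals $\sim c\cdot 3^{-k}k$ up to constants. Thus $\Vert w_k\Vert^\ast_K\sim k\,3^{-k}\La w_k\Ra_K$ — which is \emph{not} what is claimed, so I would instead suspect that $\phi_0$ must be evaluated without normalizing, or that $\Vert\cdot\Vert^\ast_K$ already carries the normalization $1/|K|$ built into the measure and the right comparison is with $3^k\La w_k\Ra_K$ after correctly accounting that $\int_0^\infty\phi_0(N_{w_k})\,dt$ has the $t$-variable \emph{unnormalized}; carefully, $\Vert w_k\Vert^\ast_K=\int_0^\infty\phi_0\big(|\{w_k>t\}\cap K|/|K|\big)\,dt$ and the heights $t$ range up to $c\lambda^{i}\sim c\cdot 3^i$ so the tail levels contribute $\sum_{l} c\,3^l\,\phi_0(3^{-lk})$, and since now the ratio is $3\cdot 3^{-k}<1$ still the first term wins giving again $\sim c\,k\,3^{-k}$. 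The resolution must be that $w_k$ is \emph{not} bounded above by $c\cdot 3^i$ at a single atom but rather takes its large values $c\lambda^{l}$ only on the tiny deepest sets, and the correct dominant balance is the one maximizing $t\cdot\phi_0(N_{w_k}(t))$-type products; I will redo the bookkeeping so that the level with measure $\sim 3^{-k}$ (namely $I(J)$ together with the coarsest piece of $J$) contributes $c\cdot\phi_0(3^{-k})$, and check against the direct identity $\Vert w_k\Vert^\ast_K\sim\La M_K w_k\Ra_K$ from Subsection \ref{s: Llog L}, computing $\La M_K w_k\Ra_K$ instead.

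Therefore the cleanest route, and the one I would actually write up, is to use $\Vert w_k\Vert^\ast_K\sim\La M_K w_k\Ra_K$ where $M_K$ is the Hardy--Littlewood maximal operator restricted to subintervals of $K$. One shows $M_K w_k\gtrsim c\cdot 3^k$ on a subset of $K$ of measure $\sim 3^{-k}|K|$ (namely a neighborhood, of size $\sim|I(J)|=3^{-k}|K|$, of the deepest non-vanishing triadic intervals, where averaging over a tiny interval recovers the largest local value of $w_k$), while $M_K w_k\lesssim c\cdot 3^k$ everywhere and $M_K w_k\lesssim$ a rapidly decaying function off that set; summing, $\La M_K w_k\Ra_K\sim 3^{-k}\cdot c\cdot 3^k = 3^k\cdot 3^{-k}c\cdots$ — and iterating over the nested generations, using Lemma \ref{packing_K} to control $\sum_{K'\subseteq K}w(K')\sim 3^k w(K)$, one gets $\La M_K w_k\Ra_K\sim 3^k\La w_k\Ra_K$ exactly as claimed. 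The key input is precisely Lemma \ref{packing_K}: the total mass of $w_k$ in $K$ ``spread out'' by the maximal function is $\sum_{K'\in\mathbf K, K'\subseteq K}w(K')\sim 3^k w(K)$, and dividing by $|K|$ gives $3^k\La w_k\Ra_K$. The main obstacle will be organizing the level-set/maximal-function bookkeeping carefully enough to get matching upper and lower bounds with absolute constants — in particular verifying that $M_K w_k$ on each ``corridor'' between generations is comparable to the local average (Lemmas \ref{average_estimates_triadic}, \ref{average_estimates}, \ref{comparison_contain_endpoint} handle exactly this) so that no generation contributes more than $O(1)$ times $3^k\La w_k\Ra_K$ and the coarsest generation contributes at least a fixed positive multiple of it.
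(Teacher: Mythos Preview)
Your first approach --- computing $\int_0^\infty \phi_0(N^K_{w_k}(t))\,dt$ directly --- is exactly what the paper does, and it works once you fix a single arithmetic slip. You claim the normalized measure of the set where $w_k$ equals $c\lambda^l$ (with $c=\La w_k\Ra\ci{K}$, $\lambda=3^k/(3^{k-1}+1)$) is $\sim 3^{-lk}$. It is not: inside $K\in\mathbf K_i$ there are $3^{(l-1)(k-1)}$ intervals $I(J')$ with $J'\in\mathbf J_{i+l}$, each of length $3^{-(i+l)k}$, so the normalized measure is $3^{(l-1)(k-1)-lk}=3^{1-k-l}$, i.e.\ $\sim 3^{-k-l}$. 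The decay from level $l$ to $l+1$ is by a factor $3^{-1}$, \emph{not} $3^{-k}$. Hence the super-level sets satisfy $N^K_{w_k}(t)\sim 3^{-k-l}$ for $t\in[c\lambda^l,c\lambda^{l+1})$, and
\[
\Vert w_k\Vert^\ast\ci{K}\sim\sum_{l\ge 0}c\lambda^l\,\phi_0(3^{-k-l})
= c\,3^{-k}\sum_{l\ge 0}\Big(\tfrac{\lambda}{3}\Big)^{l}\big(1+(k+l)\log 3\big).
\]
Here $\lambda/3=3^{k-1}/(3^{k-1}+1)$ is within $\sim 3^{-k}$ of $1$, so the series is far from being dominated by its first term: $\sum_l(\lambda/3)^l\sim 3^k$ and $\sum_l l(\lambda/3)^l\sim 3^{2k}$. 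The second sum wins, giving $\Vert w_k\Vert^\ast\ci{K}\sim c\,3^{-k}\cdot 3^{2k}=3^k\La w_k\Ra\ci{K}$. This is precisely the paper's computation in Subsection~\ref{s: lorentz norm computations}; your abandoned route was the right one.

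Your fallback via $\Vert w_k\Vert^\ast\ci{K}\sim\La M\ci{K}w_k\Ra\ci{K}$ can also be made to work, and the link to Lemma~\ref{packing_K} is genuine, but your sketch contains errors: $M\ci{K}w_k$ is \emph{not} bounded above by $c\cdot 3^k$ (the weight $w_k$ is itself unbounded on $K$), and the heights do not ``range up to $c\lambda^i$''. The correct picture is that by Lemmas~\ref{average_estimates_triadic}--\ref{average_estimates} one has $M\ci{K}w_k\sim\La w_k\Ra\ci{K'}$ on each shell $K'\setminus\bigcup\text{ch}\ci{\mathbf K}(K')$ for $K'\in\mathbf K$, $K'\subseteq K$; since that shell has measure $\sim|K'|$, one gets $\int_K M\ci{K}w_k\sim\sum_{K'\in\mathbf K,\,K'\subseteq K}w(K')\sim 3^k w(K)$ by Lemma~\ref{packing_K}, hence $\La M\ci{K}w_k\Ra\ci{K}\sim 3^k\La w_k\Ra\ci{K}$. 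So this alternative route is valid, but it needs the shell-by-shell bookkeeping, not the ``$\lesssim c\cdot 3^k$ everywhere'' claim.
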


Assume now Lemma \ref{l: entropy computation}. Fix $k>4000$. Choose any $K\in\mathbf{K}$, and set $J:=K^{m}$. Let $K'$ be the unique triadic subinterval of $J$ of length $3^{1-k}|J|=|I(J)|$ that is adjacent to $I(J)$. Consider the non-triadic interval
\begin{equation*}
R_k=R:=I(J)\cup K'.
\end{equation*}
Note that $|R|=2|K'|$. Recall that $\La w_k\Ra\ci{I(J)}=\La w_k\Ra\ci{K'}$ and $\La \sigma_k\Ra\ci{I(J)}\gtrsim \La \sigma_k\Ra\ci{K'}$, therefore $\La w_k\Ra\ci{R}=\La w_k\Ra\ci{I(J)}$ and $\La \sigma_k\Ra\ci{R}\sim \La\sigma_k\Ra\ci{I(J)}$. Thus $\La w_{k}\Ra\ci{R}\La\sigma_{k}\Ra\ci{R}\sim 1$, and it also follows from expression \eqref{computation Lorentz} for Lorentz norms and Lemma \ref{l: entropy computation} that
\begin{align*}
\Vert w_k\Vert\ci{R}^{\ast}&\geq\frac{1}{2}\Vert w_k\Vert\ci{K'}^{\ast}\gtrsim3^{k}\La w_{k}\Ra\ci{K'}= 3^{k}\La w_{k}\Ra\ci{R}.
\end{align*}

\subsection{An improvement for triadic intervals}\label{s: Lorentz-Orlicz comparison}

Consider the Young function $\Phi$ given by
\begin{equation*}
\Phi(t):=t\log(e+t)(\log(\log(e^e+t)))^r,\qquad0\leq t<\infty.
\end{equation*}
Clearly $\int_{1}^{\infty}\frac{1}{\Phi(t)}dt<\infty$. Our goal here is to prove that 
\begin{equation*}
\sup_{I\in\mathcal{T}}\Vert\wt{w}\Vert\ci{L^{\Phi}(I)}\La\sigma\Ra\ci{I}<\infty,\qquad \sup_{I\in\mathcal{T}}\Vert\sigma\Vert\ci{L^{\Phi}(I)}\La\wt{w}\Ra\ci{I}<\infty,
\end{equation*}
where $\mathcal{T}$ is the family of all triadic intervals in $\R$. Working similarly to Subsection \ref{s: direct sum} we see that it suffices to prove that
\begin{equation*}
\sup_{I\in\mathcal{T}([0,1))}\Vert\wt{w}_{k}\Vert\ci{L^{\Phi}(I)}\La\sigma_{k}\Ra\ci{I}\lesssim_{r} 1,\qquad \sup_{I\in\mathcal{T}([0,1))}\Vert \sigma_{k}\Vert\ci{L^{\Phi}(I)}\La \wt{w}_{k}\Ra\ci{I}\lesssim_{r}1,
\end{equation*}
where $\mathcal{T}([0,1))$ is the family of all triadic subintervals of $[0,1)$. Therefore, it suffices to prove that
\begin{equation}
\label{goal Lorentz bump}
\sup_{I\in\mathcal{T}([0,1))}\Vert w_{k}\Vert\ci{L^{\Phi}(I)}\La\sigma_{k}\Ra\ci{I}\lesssim_{r} k^r,\qquad \sup_{I\in\mathcal{T}([0,1))}\Vert \sigma_{k}\Vert\ci{L^{\Phi}(I)}\La w_{k}\Ra\ci{I}\lesssim_{r}1.
\end{equation}

Clearly $\Phi$ is strictly increasing on $[0,\infty)$. Let $\Phi^{-1}$ be the inverse of $\Phi$. Consider the function $\phi:[0,1]\rightarrow[0,\infty)$ given by
\begin{equation*}
\phi(s):=\frac{1}{\Phi^{-1}(1/s)},\qquad0<s\leq1,\qquad\phi(0)=0.
\end{equation*}
Consider also the function $\psi:[0,1]\rightarrow[0,\infty)$ given by
\begin{equation*}
\psi(s):=s(12-\log s)(\log(12-\log s))^r,\qquad0<s\leq1,\qquad\psi(0)=0.
\end{equation*}
Note that $\psi$ is continuous, strictly increasing and strictly convave. It is not hard to see, and we include a proof in the appendix, that
\begin{equation*}
\psi(s)\sim_{r} \phi(s),\qquad 0\leq s\leq1.
\end{equation*}
Combining this observation with \eqref{Orlicz dominated by Lorentz} we deduce that for all intervals $I$ we have
\begin{equation*}
\Vert f\Vert\ci{L^{\Phi}(I)}\lesssim_{r}\Vert f\Vert\ci{\Lambda_{\psi}(I)},
\end{equation*}
for all measurable functions $f\geq0$ on $I$. Therefore, \eqref{goal Lorentz bump} will follow once we show that
\begin{equation*}
\sup_{I\in\mathcal{T}([0,1))}\Vert w_{k}\Vert\ci{\Lambda_{\psi}(I)}\La \sigma_{k}\Ra\ci{I}\lesssim k^r,\qquad \sup_{I\in\mathcal{T}([0,1))}\Vert \sigma_{k}\Vert\ci{\Lambda_{\psi}(I)}\La  w_{k}\Ra\ci{I}\lesssim 1.
\end{equation*}
Recalling the expression \eqref{computation Lorentz} for Lorentz norms, a computation as in the proof of Lemma \ref{average_estimates_triadic} shows that it suffices to prove that
\begin{align*}
\sup_{K\in\mathbf{K}}\Vert w_k\Vert\ci{\Lambda_{\psi}(K)}\La\sigma_k\Ra\ci{K}\lesssim k^{r},\qquad\sup_{K\in\mathbf{K}}\Vert \sigma_{k}\Vert\ci{\Lambda_{\psi}(K)}\La  w_{k}\Ra\ci{K}\lesssim 1.
\end{align*}
Recalling from Lemma \ref{average_estimates_triadic} that $\La \sigma_{k}\Ra\ci{K}\sim 3^{-k}\La w_{k}\Ra\ci{K}^{-1}$, for all $K\in\mathbf{K}$, it suffices to prove that
\begin{equation}
\label{lorentz estimate}
\Vert w_{k}\Vert\ci{\Lambda_\psi(K)}\lesssim 3^{k}k^{r}\La w_{k}\Ra\ci{K},\qquad \Vert \sigma_{k}\Vert\ci{\Lambda_\psi(K)}\lesssim \La w_{k}\Ra\ci{K}^{-1},\qquad\forall K\in\mathbf{K}.
\end{equation}
The proof of \eqref{lorentz estimate} is given in Subsection \ref{s: lorentz norm computations} below.

\subsection{Computing Lorentz norms} \label{s: lorentz norm computations} In this subsection we estimate the Lorents norms $\Vert w_{k}\Vert\ci{\Lambda_{\phi}(K)},\Vert \sigma_{k}\Vert\ci{\Lambda_{\phi}(K)}$, $\phi\in\lbrace\phi_0,\psi\rbrace$, for any $K\in\mathbf{K}$, using expression \eqref{computation Lorentz} for Lorentz norms. In order to simplify the notation, we denote $w_k,\sigma_k$ by just $w,\sigma$ respectively.

In what follows, we fix $K\in\mathbf{K}$. Let $\phi:[0,1]\rightarrow[0,\infty)$ be a continuous concave increasing function with $\phi(0)=0$ and $\phi(s)>0$, for all $s\in(0,1]$. Let $i$ be the unique nonnegative integer such that $K\in\mathbf{K}_{i}$, so $|K|=3^{-ik}$. Note that $w|\ci{K},\sigma|\ci{K}$ vanish outside
\begin{equation*}
\bigcup_{m=i+1}^{\infty}\bigcup_{\substack{J\in\mathbf{J}_{m}\\J\subseteq K}}I(J).
\end{equation*}
For all $m\geq i+1$, we have $\#\mathbf{J}_{m}=3^{(m-i-1)(k-1)}$, and for all $J\in\mathbf{J}_{m}$, we have $|I(J)|=3^{-mk}$, $w|\ci{I(J)}\equiv\left(\frac{3^{k}}{3^{k-1}+1}\right)^{m}$ and $\sigma|\ci{I(J)}\equiv\left(\frac{3^{k}}{3^{k-1}+1}\right)^{-m}$. We also note that
\begin{align*}
\sum_{m=l+1}^{\infty}3^{(m-i-1)(k-1)}\fdot 3^{-mk}&=\frac{3}{2}\fdot3^{-(i+1)k}\fdot3^{i-l},\qquad\forall l=i,i+1,i+2,\ldots.
\end{align*}
It follows that
\begin{equation*}
N^{K}_{w}(t)=
\begin{cases}
\frac{3}{2}\fdot3^{-k},\text{ if }t<\left(\frac{3^{k}}{3^{k-1}+1}\right)^{i+1}\\\\
\frac{3}{2}\fdot 3^{-k}\fdot 3^{i-l},\text{ if }\left(\frac{3^{k}}{3^{k-1}+1}\right)^{l}\leq t<\left(\frac{3^{k}}{3^{k-1}+1}\right)^{l+1},\text{ for some }l\geq i+1
\end{cases}
,~0<t<\infty,
\end{equation*}
therefore expression \eqref{computation Lorentz} for Lorentz norms yields
\begin{align}
\label{general expression}
\Vert w\Vert\ci{\Lambda_{\phi}(K)}&=\left(\frac{3^{k}}{3^{k-1}+1}\right)^{i+1}\phi\left(\frac{3}{2}\cdot3^{-k}\right)+\sum_{l=i+1}^{\infty}\left[\left(\frac{3^{k}}{3^{k-1}+1}\right)^{l+1}-\left(\frac{3^{k}}{3^{k-1}+1}\right)^{l}\right]\phi\left(\frac{3}{2}\cdot3^{-k}\cdot3^{i-l}\right)\\
\nonumber&\sim\left(\frac{3^{k}}{3^{k-1}+1}\right)^{i+1}\sum_{l=0}^{\infty}\left(\frac{3^{k}}{3^{k-1}+1}\right)^{l}\phi\left(\frac{3}{2}\cdot3^{-k}\cdot3^{-l}\right)\\
\nonumber&\sim\La w\Ra\ci{K}\sum_{l=0}^{\infty}\left(\frac{3^{k}}{3^{k-1}+1}\right)^{l}\phi\left(3^{-k-l}\right),
\end{align}
since $\phi(s)\leq\phi(3s/2)\leq3\phi(s)/2$, for all $s\in[0,2/3]$. A similar computation shows that
\begin{equation*}
\Vert\sigma\Vert\ci{\Lambda_{\phi}(K)}\lesssim\phi(3^{-k})\left(\frac{3^{k}}{3^{k-1}+1}\right)^{-(i+1)}\lesssim\phi(1)\La w\Ra\ci{K}^{-1}.
\end{equation*}

\subsubsection{Estimating entropy bumps} Here we specialize to the case
\begin{equation*}
\phi(s)=\phi_0(s)=s(1-\log s),~0<s\leq1.
\end{equation*}
We have
\begin{align*}
&\sum_{l=0}^{\infty}\left(\frac{3^{k}}{3^{k-1}+1}\right)^{l}\phi_0(3^{-k-l})=3^{-k}\sum_{l=0}^{\infty}\left(\frac{3^{k-1}}{3^{k-1}+1}\right)^{l}(1+(l+k)\log 3)\\
&\sim3^{-k}k\sum_{l=0}^{\infty}\left(\frac{3^{k-1}}{3^{k-1}+1}\right)^{l}+3^{-k}\sum_{l=1}^{\infty}l\left(\frac{3^{k-1}}{3^{k-1}+1}\right)^{l}\sim3^{-k}k3^{k}+3^{-k}3^{2k}\sim 3^{k}.
\end{align*}
It follows from \eqref{general expression} that
\begin{align*}
\Vert w\Vert\ci{K}^{\ast}=\Vert w\Vert\ci{\Lambda_{\phi_0}(K)}\sim3^{k}\La w\Ra\ci{K}.
\end{align*}

\subsubsection{Estimating stronger Lorentz bumps} Here we specialize to the case
\begin{equation*}
\phi(s)=\psi(s)=s(12-\log s)(\log(12-\log s))^r,~0<s\leq1.
\end{equation*}
Recall that $1<r<2$ and $k>3000$. We have
\begin{align*}
\sum_{l=0}^{\infty}\left(\frac{3^{k}}{3^{k-1}+1}\right)^{l}\psi(3^{-k-l})&\sim 3^{-k}\sum_{l=2}^{\infty}\left(\frac{3^{k-1}}{3^{k-1}+1}\right)^{l}(k+l)((\log k)^r+(\log l)^r)\\
&=3^{-k}k(\log k)^{r}\sum_{l=2}^{\infty}\left(\frac{3^{k-1}}{3^{k-1}+1}\right)^{l}+
k3^{-k}\sum_{l=2}^{\infty}(\log l)^r\left(\frac{3^{k-1}}{3^{k-1}+1}\right)^{l}+\\
&+
3^{-k}(\log k)^{r}\sum_{l=1}^{\infty}l\left(\frac{3^{k-1}}{3^{k-1}+1}\right)^{l}+3^{-k}\sum_{l=2}^{\infty}l(\log l)^r\left(\frac{3^{k-1}}{3^{k-1}+1}\right)^{l}.
\end{align*}
We show in the appendix that
\begin{equation*}
\sum_{l=2}^{\infty}(\log l)^{r}x^{l}\lesssim\frac{(-\log(1-x))^{r}}{1-x},\qquad \sum_{l=2}^{\infty}l(\log l)^{r}x^{l}\lesssim\frac{(-\log(1-x))^{r}}{(1-x)^2},\qquad 0<x<1.
\end{equation*}
It follows that
\begin{align*}
\sum_{l=0}^{\infty}\left(\frac{3^{k}}{3^{k-1}+1}\right)^{l}\psi(3^{-k-l})&\lesssim 3^{-k}k(\log k)^{r}3^{k}+k3^{-k}k^r3^{k}+3^{-k}(\log k)^{r}3^{2k}+3^{-k}k^r3^{2k}\\
&\sim 3^{k}k^r,
\end{align*}
concluding the proof.

\section{Appendix}

\subsection{From martingale sparse families to general sparse families} Here we explain how estimates for sparse $p$-functions over general sparse families can be reduced to estimates for sparse $p$-functions over martingale sparse families. We follow \cite[Subsection 2.1]{convex_body}.

Let $1\leq p<\infty$ and $0<\eta<1$. Let $\cS$ be an $\eta$-sparse family of cubes in $\R^d$. We first recall the well-known ``three lattices trick'', see for instance \cite[Theorem 3.1]{intuitive}: there exist dyadic lattices $\cD^{j},~j=1,\ldots,3^d$ of cubes in $\R^d$, such that for all cubes $Q$ in $\R^d$ there exist $j\in\lbrace 1,\ldots,3^{d}\rbrace$ and $R\in\cD^j$ with $Q\subseteq R$ and $|R|\leq 6^{d}|Q|$. Using this, it is easy to see that there exist $\frac{6^{d}+\eta-1}{6^{d}}$-sparse families $\cS^{j},~j=1,\ldots,3^{d}$, such that $\cS^{j}\subseteq\cD^{j}$, for all $j=1,\ldots,3^{d}$, and
\begin{equation*}
\mathcal{A}\ci{\cS,p}f\leq 6^{d}\sum_{j=1}^{3^{d}}\mathcal{A}\ci{\cS^j,p}f,\qquad\forall f\in L^1\ti{loc}(\R^d).
\end{equation*}
Let $j\in\lbrace1,\ldots,3^{d}\rbrace$. Since $\cS^{j}$ is $\frac{6^{d}+\eta-1}{6^{d}}$-sparse, we deduce
\begin{equation*}
\sum_{\substack{Q\in\cS^{j}\\Q\subseteq R}}|Q|\leq\frac{6^{d}}{1-\eta}|R|,\qquad\forall R\in\cS.
\end{equation*}
Choose an integer $m$ greater than $\max\left(2,\frac{6^d}{1-\eta}-1\right)$. Then, by \cite[Lemma 6.6]{intuitive} we have that one can write $\cS^{j}=\bigcup_{k=1}^{m}\cS^{j,k}$ in such a way that
\begin{equation*}
\sum_{\substack{Q\in\cS^{j,k}\\Q\subseteq R}}|Q|\leq\lambda|R|,\qquad\forall R\in\cS^{j,k},\qquad\forall k=1,\ldots,m,
\end{equation*}
where $\lambda:=1+\frac{\frac{6^d}{1-\eta}-1}{m}$. Since $\lambda\in(1,2)$, setting $\e:=\lambda-1\in(0,1)$ we immediately deduce
\begin{equation*}
\sum_{Q\in\text{ch}\ci{\cS^{j,k}}(R)}|Q|\leq\e|R|,\qquad\forall R\in\cS^{j,k},
\end{equation*}
and thus $\cS^{j,k}$ is martingale $\e$-sparse.

Noting that
\begin{equation*}
\mathcal{A}\ci{\cS^{j},p}f\leq\sum_{k=1}^{m}\mathcal{A}\ci{\cS^{j,k},p}f,\qquad\forall f\in L^1\ti{loc}(\R^d),\qquad\forall j=1,\ldots, 3^d
\end{equation*}
completes the reduction.

\subsection{Estimating a fundamental function} Let $r\in(1,\infty)$. Consider the strictly increasing Young function $\Phi:[0,\infty)\rightarrow[0,\infty)$ given by
\begin{equation*}
\Phi(t):=t\log(e+t)(\log(\log(e^e+t)))^r,\qquad0\leq t<\infty.
\end{equation*}
Let $\Phi^{-1}:[0,\infty)\rightarrow[0,\infty)$ be the inverse of $\Phi$. Consider the function $\phi:[0,1]\rightarrow[0,\infty)$ given by
\begin{equation*}
\phi(s):=\frac{1}{\Phi^{-1}(1/s)},\qquad\forall s\in(0,1],\qquad \psi(0)=0.
\end{equation*}
We prove that
\begin{equation*}
\phi(s)\sim_{r} s(12-\log s)(\log(12-\log s))^r,\qquad\forall s\in[0,1].
\end{equation*}
It suffices to prove that there exists $M_1=M_1(r)>100$ such that
\begin{equation*}
\Phi^{-1}(t)\sim_{r}\frac{t}{(\log t)(\log(\log t))^r},\qquad\forall t\in[M_1,\infty).
\end{equation*}
Set
\begin{equation*}
B(t):=\frac{t}{(\log t)(\log(\log t))^r},\qquad t\in[100,\infty).
\end{equation*}
Choose $M=M(r)>100$ such that $B(t)>100$, for all $t\in(M,\infty)$. Notice that
\begin{equation*}
\Phi(s)\sim_{r} s(\log s)(\log(\log s))^r,\qquad\forall t\in[100,\infty).
\end{equation*}
Set
\begin{equation*}
C(t):=\log(\log t)+r\cdot\log(\log(\log t)),\qquad t\in[100,\infty).
\end{equation*}
Then, we have
\begin{align*}
\Phi(B(t))\sim_{r}\frac{t(\log t-C(t))(\log(\log t-C(t)))^r}{(\log t)(\log(\log t))^r},\qquad\forall t>M,
\end{align*}
therefore since $\lim_{t\rightarrow\infty}\frac{C(t)}{\log t}=0$ we deduce that there exist constants $M_1=M_1(r)>M$ and $c=c(r)<1<C=C(r)$ such that
\begin{equation*}
ct\leq\Phi(B(t))\leq Ct,\qquad\forall t>M_1.
\end{equation*}
Then, since $\Phi$ is convex with $\Phi(0)=0$ we deduce
\begin{equation*}
\Phi\left(\frac{1}{C}B(t)\right)\leq t\leq\Phi\left(\frac{1}{c}B(t)\right),~\forall t>M_1,
\end{equation*}
therefore
\begin{equation*}
\frac{1}{C}B(t)\leq\Phi^{-1}(t)\leq\frac{1}{c}B(t),~\forall t>M_1.
\end{equation*}
This yields the desired result.

\subsection{Elementary estimates for sums of series} Let $r\in(1,2)$. We prove that
\begin{equation*}
\sum_{n=2}^{\infty}(\log n)^{r}x^{n}\lesssim\frac{(-\log(1-x))^{r}}{1-x},\qquad \sum_{n=2}^{\infty}n(\log n)^{r}x^{n}\lesssim\frac{(-\log(1-x))^{r}}{(1-x)^2},\qquad 0<x<1.
\end{equation*}
Recall that
\begin{equation*}
-\log(1-x)=\sum_{n=1}^{\infty}\frac{1}{n}x^n,\qquad \frac{1}{1-x}=\sum_{n=0}^{\infty}x^{n},\qquad\frac{1}{(1-x)^2}=\sum_{n=0}^{\infty}(n+1)x^{n},\qquad0<x<1.
\end{equation*}
Set $a:=r-1\in(0,1)$. Note the following consequences of the integral test:
\begin{equation*}
\sum_{k=1}^{n}\frac{1}{k}\sim\log n,\qquad \sum_{k=1}^{n}\frac{(\log k)^{a}}{k}\sim (\log n)^{r},\qquad n=2,3,\ldots.
\end{equation*}
Series multiplication yields then that for all $0<x<1$ there holds
\begin{equation*}
(-\log (1-x))^2=\sum_{n=2}^{\infty}\left(\sum_{k=1}^{n-1}\frac{1}{k(n-k)}\right)x^{n}=\sum_{n=2}^{\infty}\frac{1}{n}\left(\sum_{k=1}^{n-1}\left(\frac{1}{k}+\frac{1}{n-k}\right)\right)x^n\sim\sum_{n=2}^{\infty}\frac{\log n}{n}x^n.
\end{equation*}
Set $b:=1/a\in(1,\infty)$ and $b':=b/(b-1)=1/(1-a)$. Then, for all $x\in(0,1)$, applying H\"{o}lder's inequality (for series) for the exponents $b,b'$ we deduce
\begin{align*}
\sum_{n=2}^{\infty}\frac{(\log n)^{a}}{n}x^n&=\sum_{n=2}^{\infty}\frac{(\log n)^{a}}{n^{a}}x^{an}\cdot\frac{1}{n^{1-a}}x^{n(1-a)}\leq\left(\sum_{n=2}^{\infty}\frac{\log n}{n}x^n\right)^{1/b}\left(\sum_{n=2}^{\infty}\frac{1}{n}x^{n}\right)^{1/b'}\\
&\lesssim (-\log(1-x))^{2a}(-\log(1-x))^{1-a}=(-\log(1-x))^{r}.
\end{align*}
Therefore, for all $x\in(0,1)$, series multiplication yields
\begin{align*}
\frac{(-\log(1-x))^{r}}{1-x}\gtrsim\left(\sum_{n=2}^{\infty}\frac{(\log n)^a}{n}x^n\right)\left(\sum_{n=0}^{\infty}x^n\right)=\sum_{n=2}^{\infty}\left(\sum_{k=2}^{n}\frac{(\log k)^{a}}{k}\right)x^n\gtrsim\sum_{n=2}^{\infty}(\log n)^{r}x^n.
\end{align*}
Finally, for all $x\in(0,1)$, series multiplication yields
\begin{align*}
\frac{(-\log(1-x))^{r}}{(1-x)^2}\gtrsim\left(\sum_{n=2}^{\infty}\frac{(\log n)^{a}}{n}x^n\right)\left(\sum_{n=0}^{\infty}(n+1)x^n\right)=\sum_{n=2}^{\infty}\left(\sum_{k=2}^{n}\frac{(\log k)^a(n-k+1)}{k}\right)x^n.
\end{align*}
For all $n=2,3,\ldots$, we have
\begin{equation*}
(n+1)\sum_{k=2}^{n}\frac{(\log k)^{a}}{k}\gtrsim (n+1)(\log n)^{r},\qquad \sum_{k=2}^{n}(\log k)^{a}\leq n(\log n)^{a}.
\end{equation*}
Noting that $\lim_{n\rightarrow\infty}\frac{(\log n)^{r}}{(\log n)^{a}}=\lim_{n\rightarrow\infty}\log n=\infty$, we deduce
\begin{equation*}
\frac{(-\log(1-x))^{r}}{(1-x)^2}\gtrsim\sum_{n=2}^{\infty}n(\log n)^{r}x^n,
\end{equation*}
for all $x\in(0,1)$.

\end{document}